\newcommand\numberingtheoremsectionyesno{yes}
\newcommand\numberingequationsectionyesno{yes}
\newcommand\pagesizeextendednormal{extended}
\newcommand\reportudemathyesno{no}
\newcommand\reportudemathnumber{SM-UDE-???}
\newcommand\reportudemathyear{2017}
\newcommand\reportudematheingang{\mydate}
\newcommand\mytitle{\Large Time-Harmonic Electro-Magnetic Scattering\\ 
in Exterior Weak Lipschitz Domains with Mixed Boundary Conditions}
\newcommand\mytitlerepude{}
\newcommand\myshorttitle{Time-Harmonic Maxwell Equations}
\newcommand\myauthorone{Frank Osterbrink}
\newcommand\myauthortwo{Dirk Pauly}
\newcommand\myauthors{\myauthorone\quad\&\quad\myauthortwo}
\newcommand\myaddressone{Fakult\"at f\"ur Mathematik,
Universit\"at Duisburg-Essen, Campus Essen, Germany}
\newcommand\myemailone{frank.osterbrink@uni-due.de}
\newcommand\myemailtwo{dirk.pauly@uni-due.de}
\newcommand\mykeywords{Maxwell equations, radiating solutions, exterior boundary value problems, polynomial decay, mixed boundary conditions, weighted Sobolev spaces, Hodge-Helmholtz decompositions}
\newcommand\mysubjclass{35Q60, 78A25, 78A30}
\newcommand\mydate{\today; Corresponding Author: Dirk Pauly}
\newcommand\mythanks{}
\DeclareFontFamily{U}{mathx}{\hyphenchar\font45}
\DeclareFontShape{U}{mathx}{m}{n}{
      <5> <6> <7> <8> <9> <10>
      <10.95> <12> <14.4> <17.28> <20.74> <24.88>
      mathx10
      }{}
\DeclareSymbolFont{mathx}{U}{mathx}{m}{n}
\DeclareMathAccent{\widecheck}{0}{mathx}{"71}
\DeclareMathAccent{\wideparen}{0}{mathx}{"75}
\newcommand{\preprintudemath}[5]{
\thispagestyle{empty}
\Large
\begin{center}SCHRIFTENREIHE DER FAKULT\"AT F\"UR MATHEMATIK\end{center}
\vspace*{5mm}
\begin{center}#1\end{center}
\vspace*{5mm}
\begin{center}by\end{center}
\begin{center}#2\end{center}
\vspace*{5mm}
\begin{center}#3\hspace{80mm}#4\end{center}
\newpage
\thispagestyle{empty}
\vspace*{210mm}
Received: #5
\newpage
\addtocounter{page}{-2}
\normalsize}
\numberwithin{equation}{section}}{}
\newcommand{\leqnomode}{\tagsleft@true}
\newcommand{\reqnomode}{\tagsleft@false}
\newcommand{\dsp}{\displaystyle}
\newcommand{\ovl}[1]{\overline{#1}}
\newcommand{\setb}[2]{\big\{#1\,\big|\,#2\big\}}
\newcommand{\setB}[2]{\Big\{#1\,\Big|\,#2\Big\}}
\newcolumntype{L}[1]{>{\raggedright\arraybackslash}p{#1}}
\newcolumntype{C}[1]{>{\centering\arraybackslash}p{#1}}
\newcolumntype{R}[1]{>{\raggedleft\arraybackslash}p{#1}} 
\newtheorem{lem}{Lemma}[section]}
\newtheorem{lem}{Lemma}}
\newtheorem{defi}[lem]{Definition}
\newtheorem{theo}[lem]{Theorem}
\newtheorem{cor}[lem]{Corollary}
\newtheorem{rem}[lem]{Remark}
\newcommand{\al}{\alpha}
\newcommand{\da}{\delta}
\newcommand{\eps}{\varepsilon}
\newcommand{\ceta}{\check{\eta}}
\newcommand{\ga}{\gamma}
\newcommand{\ka}{\kappa}
\newcommand{\la}{\lambda}
\newcommand{\om}{\omega}
\newcommand{\Ga}{\mathsf{\Gamma}}
\newcommand{\La}{\mathrm{\Lambda}}
\newcommand{\Laz}{\La_{0}}
\newcommand{\tLaz}{\widetilde{\La}_{0}}\newcommand{\Om}{\Omega}
\newcommand{\Omb}{\ovl{\Om}}
\newcommand{\calA}{{\mathcal A}}
\newcommand{\calF}{{\mathcal F}}
\newcommand{\calL}{{\mathcal L}}
\newcommand{\calM}{{\mathcal M}}
\newcommand{\calP}{{\mathcal P}}
\newcommand{\calR}{{\mathcal R}}
\newcommand{\calS}{{\mathcal S}}
\newcommand{\sfC}{{\mathsf C}}
\newcommand{\sfD}{{\mathsf D}}
\newcommand{\sfH}{{\mathsf H}}
\newcommand{\sfL}{{\mathsf L}}
\newcommand{\sfR}{{\mathsf R}}
\newcommand{\sfX}{{\mathsf X}}
\newcommand{\aLR}{
	\font\eqfont=cmr10 scaled 1200
	\font\arrfont=cmsy10 scaled 1200
	\textfont0=\eqfont
	\textfont2=\arrfont
	\protect\Relbar\protect\joinrel\Rightarrow}
\renewcommand{\Longrightarrow}{\aLR}
\newcommand{\To}{\longrightarrow}
\newcommand{\restr}[2]{\left.#1 \right|_{#2}}
\newcommand{\map}[3]{#1:#2\;\longrightarrow\;#3}
\newcommand{\maps}[5]{#1:#2\;\longrightarrow\;#3,\;#4\longmapsto #5}
\newcommand{\Map}[5]	
    {\begin{array}{ccccc}	
    #1 & : & #2 & \longrightarrow & #3 \\[5pt]
    {}   & {}  & #4 & \longmapsto & #5	
    \end{array}}
\newcommand{\multif}[4]
    {\left\{\begin{array}{cl}
    \displaystyle#1 &\text{for }~#2 \\[6pt]
    \displaystyle#3 &\text{for }~#4
    \end{array}\right.}
\DeclareMathOperator{\supp}{supp}
\DeclareMathOperator{\dist}{dist}
\renewcommand{\Re}{\mathrm{Re}\,}
\renewcommand{\Im}{\mathrm{Im}\,}
\newcommand{\sol}{{\mathcal L}_{\om}}
\newcommand{\A}{\mathrm{A}}
\newcommand{\M}{\mathrm{M}}
\newcommand{\G}{\mathrm{G}}
\DeclareMathOperator{\p}{\partial}
\DeclareMathOperator{\pr}{\hspace*{-0.05cm}\p_{\it r}\hspace*{-0.7mm}}
\DeclareMathOperator{\Rot}{Rot}
\DeclareMathOperator{\rot}{rot}
\DeclareMathOperator{\divergenz}{div}
\renewcommand{\div}{\divergenz}
\newcommand{\sm}{\setminus}
\newcommand{\dpl}{\dotplus}
\newcommand{\dlan}{d\la^{n}}
\newcommand{\dlanmo}{d\la^{n-1}_{s}}
\newcommand{\rtil}{\tilde{r}}
\newcommand{\rhat}{\hat{r}}
\newcommand{\rcheck}{\check{r}}
\newcommand{\ttil}{\tilde{t}}
\newcommand{\that}{\hat{t}}
\newcommand{\Gat}{\Ga_{1}}
\newcommand{\Gan}{\Ga_{2}}
\newcommand{\Gatn}{\Ga_{i}}
\newcommand{\Gant}{\Ga_{j}}
\newcommand{\B}{\mathrm{B}}
\newcommand{\C}{\mathbb{C}}
\newcommand{\I}{\mathbb{I}}
\newcommand{\N}{\mathbb{N}}
\newcommand{\reals}{\mathbb{R}}
\newcommand{\U}{\mathrm{U}}
\newcommand{\cU}{\widecheck{\mathrm{U}}}
\newcommand{\Sp}{\mathrm{S}}
\newcommand{\dod}{\mathcal{D}}
\newcommand{\rg}{\mathcal{R}}
\renewcommand{\ker}{\mathcal{N}}
\newcommand{\V}{\mathrm{V}}
\newcommand{\X}{\mathrm{X}}	
\newcommand{\Y}{\mathrm{Y}}
\newcommand{\Z}{\mathbb{Z}}
\newcommand{\Gap}{\G(\rhat,\rtil)}
\newcommand{\Gaprcheck}{\G(1,\check{r})}
\newcommand{\Gaprtil}{\G(1,\rtil)}
\newcommand{\Gaprztil}{\G(r_{0},\rtil)}
\newcommand{\gk}[1]{\mathcal{N}_{\mathsf{gen}}(#1)}
\newcommand{\gs}{\sigma_{\mathsf{gen}}(\calM)}	
\newcommand{\rthree}{\reals^{3}}
\newcommand{\rn}{\reals^{n}}
\newcommand{\rttt}{\reals^{3\times3}}
\newcommand{\Omrtil}{\Om(\rtil)}
\newcommand{\Omda}{\Om(\da)}
\newcommand{\cgen}[3]{\overset{#1}{\sfC}{}^{\mathrm{#2}}_{\mathrm{#3}}}
\newcommand{\Cgen}[2]{\mathring{\sfC}{}^{\mathrm{#1}}_{\mathrm{#2}}}
\newcommand{\ci}{\cgen{}{\substack{\infty\\[-5pt]\phantom{\infty}}}{}}
\newcommand{\cirthree}{\ci(\rthree)}
\newcommand{\cirn}{\ci(\rn)}
\newcommand{\cic}{\Cgen{\substack{\infty\\[-5pt]\phantom{\infty}}}{}}
\newcommand{\cicom}{\cic(\Om)}
\newcommand{\cicrthree}{\cic(\rthree)}
\newcommand{\cicrn}{\cic(\rn)}
\newcommand{\cict}{\cgen{}{\substack{\infty\\[-5pt]\phantom{\infty}}}{\Gat}}
\newcommand{\cicn}{\cgen{}{\substack{\infty\\[-5pt]\phantom{\infty}}}{\Gan}}
\newcommand{\cictn}{\cgen{}{\substack{\infty\\[-5pt]\phantom{\infty}}}{\Gatn}}
\newcommand{\cicnt}{\cgen{}{\substack{\infty\\[-5pt]\phantom{\infty}}}{\Gant}}
\newcommand{\cictom}{\cict(\Om)}
\newcommand{\cicnom}{\cicn(\Om)}
\newcommand{\cictnom}{\cictn(\Om)}
\newcommand{\cicntom}{\cicnt(\Om)}
\newcommand{\lsymb}{\sfL}
\newcommand{\lgen}[3]{\overset{#1}{\lsymb}{}^{\mathrm{#2}}_{\mathrm{#3}}}
\newcommand{\ltloc}{\lgen{}{2}{loc}}
\newcommand{\lo}{\lgen{}{1}{}}
\newcommand{\lt}{\lgen{}{2}{}}
\newcommand{\li}{\lsymb^{\infty}}
\newcommand{\ltlocom}{\ltloc(\Om)}
\newcommand{\ltom}{\lt(\Om)}
\newcommand{\ltlocomb}{\ltloc(\Omb)}
\newcommand{\lorn}{\lo(\rn)}
\newcommand{\ltrn}{\lt(\rn)}
\newcommand{\ltmo}{\lgen{}{2}{-1}}
\newcommand{\lto}{\lgen{}{2}{1}}
\newcommand{\ltms}{\lgen{}{2}{-s}}
\newcommand{\ltsmtwo}{\lgen{}{2}{s-2}}
\newcommand{\ltsmo}{\lgen{}{2}{s-1}}
\newcommand{\lts}{\lgen{}{2}{s}}
\newcommand{\ltspo}{\lgen{}{2}{s+1}}
\newcommand{\lttmo}{\lgen{}{2}{t-1}}
\newcommand{\ltt}{\lgen{}{2}{t}}
\newcommand{\lttpo}{\lgen{}{2}{t+1}}
\newcommand{\ltmoom}{\ltmo(\Om)}
\newcommand{\ltsmoom}{\ltsmo(\Om)}
\newcommand{\ltsom}{\lts(\Om)}
\newcommand{\lttom}{\ltt(\Om)}
\newcommand{\lttpoom}{\lttpo(\Om)}
\newcommand{\ltmorn}{\ltmo(\rn)}
\newcommand{\ltmsrn}{\ltms(\rn)}
\newcommand{\ltsmtworn}{\ltsmtwo(\rn)}
\newcommand{\ltsmorn}{\ltsmo(\rn)}
\newcommand{\ltsrn}{\lts(\rn)}
\newcommand{\ltsporn}{\ltspo(\rn)}
\newcommand{\lttmorn}{\lttmo(\rn)}
\newcommand{\lttrn}{\ltt(\rn)}
\newcommand{\lttporn}{\lttpo(\rn)}
\newcommand{\ltthat}{\lgen{}{2}{\,\that}}
\newcommand{\ltttil}{\lgen{}{2}{\ttil}}
\newcommand{\lttpal}{\lgen{}{2}{t+|\al|}}
\newcommand{\ltomthrees}{\lgen{}{2}{1-3s}}
\newcommand{\ltsmka}{\lgen{}{2}{s-\ka}}
\newcommand{\ltsmm}{\lgen{}{2}{s-m}}
\newcommand{\ltbig}[1]{\lgen{}{2}{>#1}}
\newcommand{\ltLa}{\lgen{}{2}{\La}}
\newcommand{\ltthatom}{\ltthat(\Om)}
\newcommand{\ltttilom}{\ltttil(\Om)}
\newcommand{\lttpalom}{\lttpal(\Om)}
\newcommand{\ltsmkaom}{\ltsmka(\Om)}
\newcommand{\ltsmmom}{\ltsmm(\Om)}
\newcommand{\ltbigom}[1]{\ltbig{#1}(\Om)}
\newcommand{\ltLaom}{\ltLa(\Om)}
\newcommand{\ltomthreesrn}{\ltomthrees(\rn)}
\newcommand{\ltthatGaprztil}{\ltthat(\Gaprztil)}
\newcommand{\ltomda}{\lt(\Omda)}
\newcommand{\ltomrtil}{\lt(\Omrtil)}
\newcommand{\hsymb}{\sfH}
\newcommand{\hgen}[3]{\overset{#1}{\hsymb}{}^{\mathrm{#2}}_{\mathrm{#3}}}
\newcommand{\holoc}{\hgen{}{1}{loc}}
\newcommand{\ho}{\hgen{}{1}{}}
\newcommand{\hosmtwo}{\hgen{}{1}{s-2}}
\newcommand{\hosmtworn}{\hosmtwo(\rn)}
\newcommand{\hot}{\hgen{}{1}{t}}
\newcommand{\hottn}{\hgen{}{1}{t,\Gatn}}
\newcommand{\hmt}{\hgen{}{m}{t}}
\newcommand{\hmtn}{\hgen{}{m}{t,\Gan}}
\newcommand{\hmttn}{\hgen{}{m}{t,\Gatn}}
\newcommand{\hotom}{\hot(\Om)}
\newcommand{\hottnom}{\hottn(\Om)}
\newcommand{\hmtom}{\hmt(\Om)}
\newcommand{\hmttnom}{\hmttn(\Om)}
\newcommand{\Hsymb}{\text{\sf\bfseries H}}
\newcommand{\Hgen}[3]{\overset{#1}{\Hsymb}{}^{\mathrm{#2}}_{\mathrm{#3}}}
\newcommand{\Htwoloc}{\Hgen{}{2}{loc}}
\newcommand{\Htwolocrn}{\Htwoloc(\rn)}
\newcommand{\Ho}{\Hgen{}{1}{}}
\newcommand{\Hozt}{\Hgen{}{1}{\Gat}}
\newcommand{\Hozn}{\Hgen{}{1}{\Gan}}
\newcommand{\Hoztn}{\Hgen{}{1}{\Gatn}}
\newcommand{\Htwo}{\Hgen{}{2}{}}
\newcommand{\Hs}{\Hgen{}{s}{}}
\newcommand{\Hoztom}{\Hozt(\Om)}
\newcommand{\Hoznom}{\Hozn(\Om)}
\newcommand{\Hoztnom}{\Hoztn(\Om)}
\newcommand{\Htworn}{\Htwo(\rn)}
\newcommand{\Hos}{\Hgen{}{1}{s}}
\newcommand{\Htwos}{\Hgen{}{2}{s}}
\newcommand{\Htwosmo}{\Hgen{}{2}{s-1}}
\newcommand{\Htwosrn}{\Htwos(\rn)}
\newcommand{\Hot}{\Hgen{}{1}{t}}
\newcommand{\Hottn}{\Hgen{}{1}{t,\Gatn}}
\newcommand{\Htwot}{\Hgen{}{2}{t}}
\newcommand{\Htwotmo}{\Hgen{}{2}{t-1}}
\newcommand{\Hmt}{\Hgen{}{m}{t}}
\newcommand{\Hmttn}{\Hgen{}{m}{t,\Gatn}}
\newcommand{\Hotom}{\Hot(\Om)}
\newcommand{\Hottnom}{\Hottn(\Om)}
\newcommand{\Hmtom}{\Hmt(\Om)}
\newcommand{\Hmttnom}{\Hmttn(\Om)}
\newcommand{\Hotrn}{\Hot(\rn)}
\newcommand{\Htwotrn}{\Htwot(\rn)}
\newcommand{\Htwotmorn}{\Htwotmo(\rn)}
\newcommand{\Hothat}{\Hgen{}{1}{\that}}
\newcommand{\Htwoms}{\Hgen{}{2}{-s}}
\newcommand{\Htwothat}{\Hgen{}{2}{\that}}
\newcommand{\rsymb}{\sfR}
\newcommand{\rgen}[3]{\overset{#1}{\rsymb}{}^{\mathrm{#2}}_{\mathrm{#3}}}
\newcommand{\rloct}{\rgen{}{}{loc,\Gat}}
\newcommand{\rlocn}{\rgen{}{}{loc,\Gan}}
\newcommand{\rvox}{\rgen{}{}{vox}}
\newcommand{\rvoxom}{\rvox(\Om)}
\newcommand{\rz}{\rgen{}{}{0}}
\newcommand{\rztn}{\rgen{}{}{\Gatn}}
\newcommand{\rztnom}{\rztn(\Om)}
\newcommand{\zrzt}{{}_{0}\rgen{}{}{\Gat}}
\newcommand{\zrzn}{{}_{0}\rgen{}{}{\Gan}}
\newcommand{\zrztom}{\zrzt(\Om)}
\newcommand{\zrznom}{\zrzn(\Om)}
\newcommand{\rs}{\rgen{}{}{s}}
\newcommand{\zrs}{{}_{0}\rgen{}{}{s}}
\newcommand{\rt}{\rgen{}{}{t}}
\newcommand{\rtt}{\rgen{}{}{t,\Gat}}
\newcommand{\rttn}{\rgen{}{}{t,\Gatn}}
\newcommand{\rtom}{\rt(\Om)}
\newcommand{\rttnom}{\rttn(\Om)}
\newcommand{\zrt}{{}_{0}\rgen{}{}{t}}
\newcommand{\zrttn}{{}_{0}\rgen{}{}{t,\Gatn}}
\newcommand{\zrtpotn}{{}_{0}\rgen{}{}{t+1,\Gatn}}
\newcommand{\zrtom}{\zrt(\Om)}
\newcommand{\zrttnom}{\zrttn(\Om)}
\newcommand{\zrtpotnom}{\zrtpotn(\Om)}
\newcommand{\rthat}{\rgen{}{}{\that}}
\newcommand{\rthatom}{\rthat(\Om)}
\newcommand{\Rsymb}{\text{\sf\bfseries R}}
\newcommand{\Rgen}[3]{\overset{#1}{\Rsymb}{}^{\mathrm{#2}}_{\mathrm{#3}}}
\newcommand{\R}{\Rgen{}{}{}}
\newcommand{\Rzt}{\Rgen{}{}{\Gat}}
\newcommand{\Rzn}{\Rgen{}{}{\Gan}}
\newcommand{\Rom}{\R(\Om)}
\newcommand{\Rztom}{\Rzt(\Om)}
\newcommand{\Rznom}{\Rzn(\Om)}
\newcommand{\Rs}{\Rgen{}{}{s}}
\newcommand{\Rst}{\Rgen{}{}{s,\Gat}}
\newcommand{\Rsmo}{\Rgen{}{}{s-1}}
\newcommand{\Rsom}{\Rs(\Om)}
\newcommand{\Rstom}{\Rst(\Om)}
\newcommand{\Rsmoom}{\Rsmo(\Om)}
\newcommand{\Rt}{\Rgen{}{}{t}}
\newcommand{\Rtt}{\Rgen{}{}{t,\Gat}}
\newcommand{\Rtn}{\Rgen{}{}{t,\Gan}}
\newcommand{\Rttn}{\Rgen{}{}{t,\Gatn}}
\newcommand{\Rtom}{\Rt(\Om)}
\newcommand{\Rttom}{\Rtt(\Om)}
\newcommand{\Rtnom}{\Rtn(\Om)}
\newcommand{\Rttnom}{\Rttn(\Om)}
\newcommand{\Rthat}{\Rgen{}{}{\that}}
\newcommand{\Rthatt}{\Rgen{}{}{\that,\Gat}}
\newcommand{\Rthatn}{\Rgen{}{}{\that,\Gan}}
\newcommand{\Rttil}{\Rgen{}{}{\ttil}}
\newcommand{\Rttilt}{\Rgen{}{}{\ttil,\Gat}}
\newcommand{\Rttiln}{\Rgen{}{}{\ttil,\Gan}}
\newcommand{\Rsm}[1]{\Rgen{}{}{<#1}}
\newcommand{\Rsmt}[1]{\Rgen{}{}{<#1,\Gat}}
\newcommand{\Rsmn}[1]{\Rgen{}{}{<#1,\Gan}}
\newcommand{\Rbig}[1]{\Rgen{}{}{>#1}}
\newcommand{\Rthatom}{\Rthat(\Om)}
\newcommand{\Rthattom}{\Rthatt(\Om)}
\newcommand{\Rthatnom}{\Rthatn(\Om)}
\newcommand{\Rttilom}{\Rttil(\Om)}
\newcommand{\Rttiltom}{\Rttilt(\Om)}
\newcommand{\Rttilnom}{\Rttiln(\Om)}
\newcommand{\Rsmom}[1]{\Rsm{#1}(\Om)}
\newcommand{\Rsmtom}[1]{\Rsmt{#1}(\Om)}
\newcommand{\Rsmnom}[1]{\Rsmn{#1}(\Om)}
\newcommand{\Rbigom}[1]{\Rbig{#1}(\Om)}
\newcommand{\dsymb}{\sfD}
\newcommand{\dgen}[3]{\overset{#1}{\dsymb}{}^{\mathrm{#2}}_{\mathrm{#3}}}
\newcommand{\dz}{\dgen{}{}{0}}
\newcommand{\zdzt}{{}_{0}\dgen{}{}{\Gat}}
\newcommand{\zdzn}{{}_{0}\dgen{}{}{\Gan}}
\newcommand{\zdztom}{\zdzt(\Om)}
\newcommand{\zdznom}{\zdzn(\Om)}
\newcommand{\zds}{{}_{0}\dgen{}{}{s}}
\newcommand{\dt}{\dgen{}{}{t}}
\newcommand{\dtt}{\dgen{}{}{t,\Gat}}
\newcommand{\dttn}{\dgen{}{}{t,\Gatn}}
\newcommand{\dtom}{\dt(\Om)}
\newcommand{\dttom}{\dtt(\Om)}
\newcommand{\dttnom}{\dttn(\Om)}
\newcommand{\zdt}{{}_{0}\dgen{}{}{t}}
\newcommand{\zdtt}{{}_{0}\dgen{}{}{t,\Gat}}
\newcommand{\zdtn}{{}_{0}\dgen{}{}{t,\Gan}}
\newcommand{\zdttn}{{}_{0}\dgen{}{}{t,\Gatn}}
\newcommand{\zdtpotn}{{}_{0}\dgen{}{}{t+1,\Gatn}}
\newcommand{\zdttom}{\zdtt(\Om)}
\newcommand{\zdtnom}{\zdtn(\Om)}
\newcommand{\zdttnom}{\zdttn(\Om)}
\newcommand{\zdtpotnom}{\zdtpotn(\Om)}
\newcommand{\zdthatt}{{}_{0}\dgen{}{}{\that,\Gat}} 
\newcommand{\zdthatn}{{}_{0}\dgen{}{}{\that,\Gan}}
\newcommand{\zdthattom}{\zdthatt(\Om)} 
\newcommand{\zdthatnom}{\zdthatn(\Om)}
\newcommand{\Dsymb}{\text{\sf\bfseries D}}
\newcommand{\Dgen}[3]{\overset{#1}{\Dsymb}{}^{\mathrm{#2}}_{\mathrm{#3}}}
\newcommand{\D}{\Dgen{}{}{}}
\newcommand{\Dzn}{\Dgen{}{}{\Gan}}
\newcommand{\Dznt}{\Dgen{}{}{\Gant}}
\newcommand{\Dznom}{\Dzn(\Om)}
\newcommand{\Dzntom}{\Dznt(\Om)}
\newcommand{\Dsn}{\Dgen{}{}{s,\Gan}}
\newcommand{\Dsnom}{\Dsn(\Om)}
\newcommand{\Dt}{\Dgen{}{}{t}}
\newcommand{\Dttn}{\Dgen{}{}{t,\Gatn}}
\newcommand{\Dtom}{\Dt(\Om)}
\newcommand{\Dttnom}{\Dttn(\Om)}
\newcommand{\norm}[1]{\left\|\,#1\,\right\|}
\newcommand{\normltws}[1]{\norm{#1}_{\lt}}
\newcommand{\normltom}[1]{\norm{#1}_{\ltom}}
\newcommand{\normltrn}[1]{\norm{#1}_{\ltrn}}
\newcommand{\normltsmtwows}[1]{\norm{#1}_{\ltsmtwo}}
\newcommand{\normltsmows}[1]{\norm{#1}_{\ltsmo}}
\newcommand{\normltsws}[1]{\norm{#1}_{\lts}}
\newcommand{\normlttws}[1]{\norm{#1}_{\ltt}}
\newcommand{\normltmoom}[1]{\norm{#1}_{\ltmoom}}
\newcommand{\normltsmoom}[1]{\norm{#1}_{\ltsmoom}}
\newcommand{\normltsom}[1]{\norm{#1}_{\ltsom}}
\newcommand{\normlttom}[1]{\norm{#1}_{\lttom}}
\newcommand{\normlttpoom}[1]{\norm{#1}_{\lttpoom}}
\newcommand{\normltmsrn}[1]{\norm{#1}_{\ltmsrn}}
\newcommand{\normltsmtworn}[1]{\norm{#1}_{\ltsmtworn}}
\newcommand{\normltsmorn}[1]{\norm{#1}_{\ltsmorn}}
\newcommand{\normltsrn}[1]{\norm{#1}_{\ltsrn}}
\newcommand{\normltsporn}[1]{\norm{#1}_{\ltsporn}}
\newcommand{\normlttmorn}[1]{\norm{#1}_{\lttmorn}}
\newcommand{\normlttrn}[1]{\norm{#1}_{\lttrn}}
\newcommand{\normlttporn}[1]{\norm{#1}_{\lttporn}}
\newcommand{\normltthatws}[1]{\norm{#1}_{\ltthat}}
\newcommand{\normltthatom}[1]{\norm{#1}_{\ltthatom}}
\newcommand{\normlttpalom}[1]{\norm{#1}_{\lttpalom}}
\newcommand{\normltsmkaom}[1]{\norm{#1}_{\ltsmkaom}}
\newcommand{\normltsmmom}[1]{\norm{#1}_{\ltsmmom}}
\newcommand{\normltLaom}[1]{\norm{#1}_{\ltLaom}}
\newcommand{\normltomthreesrn}[1]{\norm{#1}_{\ltomthreesrn}}
\newcommand{\normltthatGaprztil}[1]{\norm{#1}_{\ltthatGaprztil}}
\newcommand{\normltomda}[1]{\norm{#1}_{\ltomda}}
\newcommand{\normhosmtwows}[1]{\norm{#1}_{\hosmtwo}}
\newcommand{\normhosmtworn}[1]{\norm{#1}_{\hosmtworn}}
\newcommand{\normhmtom}[1]{\norm{#1}_{\hmtom}}
\newcommand{\normHtworn}[1]{\norm{#1}_{\Htworn}}
\newcommand{\normHosws}[1]{\norm{#1}_{\Hos}}
\newcommand{\normHtwosmows}[1]{\norm{#1}_{\Htwosmo}}
\newcommand{\normHtwosrn}[1]{\norm{#1}_{\Htwosrn}}
\newcommand{\normHmtom}[1]{\norm{#1}_{\Hmtom}}
\newcommand{\normHotrn}[1]{\norm{#1}_{\Hotrn}}
\newcommand{\normHtwotrn}[1]{\norm{#1}_{\Htwotrn}}
\newcommand{\normrsws}[1]{\norm{#1}_{\rs}}
\newcommand{\normrtom}[1]{\norm{#1}_{\rtom}}
\newcommand{\normrthatom}[1]{\norm{#1}_{\rthatom}}
\newcommand{\normRom}[1]{\norm{#1}_{\Rom}}
\newcommand{\normRsmoom}[1]{\norm{#1}_{\Rsmoom}}
\newcommand{\normRtom}[1]{\norm{#1}_{\Rtom}}
\newcommand{\normRthatom}[1]{\norm{#1}_{\Rthatom}}
\newcommand{\normdtom}[1]{\norm{#1}_{\dtom}}
\newcommand{\normDtom}[1]{\norm{#1}_{\Dtom}}
\newcommand{\scp}[2]{\langle\,#1\,,#2\,\rangle}
\newcommand{\scpltom}[2]{\scp{#1}{#2}_{\ltom}}
\newcommand{\scpltrn}[2]{\scp{#1}{#2}_{\ltrn}}
\newcommand{\scpltLaom}[2]{\scp{#1}{#2}_{\ltLaom}}
\newcommand{\scpltGaprztil}[2]{\scp{#1}{#2}_{\lt(\Gaprztil)}}
\newcommand{\scpltthatGaprztil}[2]{\scp{#1}{#2}_{\ltthat(\Gaprztil)}}
\newcommand{\scpltomrtil}[2]{\scp{#1}{#2}_{\ltomrtil}}
\newcommand{\ptwomat}[4]{\begin{pmatrix}#1&#2\\#3&#4\end{pmatrix}}
\title[\sc\myshorttitle]{\Large\sf\mytitle}
\author{\myauthorone}
\author{\myauthortwo}
\address{\myaddressone}
\email[\myauthorone]{\myemailone}
\email[\myauthortwo]{\myemailtwo}
\keywords{\mykeywords}
\subjclass{\mysubjclass}
\date{\mydate}
\thanks{\mythanks}
\newcommand{\ssolu}{w}
\newcommand{\bssolu}{\ovl{\ssolu}}
\newcommand{\tssolu}{\ssolu_e}
\newcommand{\btssolu}{\ovl{\ssolu}_e}
\newcommand{\srhs}{g}
\newcommand{\tsrhs}{\srhs_e}
\newcommand{\solu}{u}
\newcommand{\hsolu}{\hat{\solu}}
\newcommand{\tsolu}{\tilde{\solu}}
\newcommand{\csolu}{\check{\solu}}
\newcommand{\rhs}{f}
\newcommand{\hrhs}{\hat{\rhs}}
\newcommand{\trhs}{\tilde{\rhs}}
\newcommand{\crhs}{\check{\rhs}}
\begin{document}
%
%
%
\ifthenelse{\equal{\reportudemathyesno}{yes}}
{\preprintudemath{\mytitlerepude}{\myauthors}{\reportudemathnumber}{\reportudemathyear}{\reportudematheingang}}{}
%
%
\begin{abstract}
This paper treats the time-harmonic electro-magnetic scattering or radiation 
problem governed by Maxwell's equations, i.e., 
\begin{alignat*}{4}
   -\rot H+i\om\eps E&=F\;\;\;&&\text{in}\;\Om,
   \qquad &\quad 
   E\times\nu &=0\;\;\;&&\text{on}\;\Ga_1,\\
   \rot E+i\om\mu H&=G\;\;\;&&\text{in}\;\Om,
   \qquad &\quad 
   H\times\nu &=0\;\;\;&&\text{on}\;\Ga_2,
\end{alignat*}
where $\om\in\C\sm(0)$ and $\Om\subset\rthree$ is an exterior weak Lipschitz 
domain with boundary $\Ga:=\p\Om$ divided into two disjoint parts $\Ga_1$ and 
$\Ga_2$. We will present a solution theory using the framework of polynomially 
weighted Sobolev spaces for the rotation and divergence. For the physically 
interesting case $\om\in\reals\sm(0)$ we will show a Fredholm alternative 
type result to hold using the principle of limiting absorption introduced by 
Eidus in the 1960's. The necessary a-priori-estimate and polynomial decay of 
eigenfunctions for the Maxwell equations will be obtained by transferring well 
known results for the Helmholtz equation using a suitable decomposition of the 
fields $E$ and $H$. The crucial point for existence is a local version of Weck's 
selection theorem, also called Maxwell compactness property.
\end{abstract}
\maketitle
\tableofcontents
%
%
\reqnomode
%
%
\section{Introduction}\label{sec:introduction}
%
%
The equations that describe the behavior of electro-magnetic vector fields in some space-time domain
$I\times\Om\subset\reals\times\rthree$, first completely formulated by J.\,C. Maxwell in 1864, are
\begin{alignat*}{3}
	-\rot H+\p_{t}D&=J,
	\qquad\quad & 
	\rot E+\p_{t}B&=0,
	\qquad\quad &
	&\text{ in }I\times\Om,\\
	\div D&=\rho,
	\qquad\quad & 
	\div B&=0,
	\qquad\quad &
	&\text{ in }I\times\Om,
\end{alignat*}
where $E$,\,$H$ are the electric resp.\;magnetic field, $D$,\,$B$ represent the 
displacement current and magnetic induction and $J$,\,$\rho$ describe the current 
density resp.\;the charge density. Excluding, e.g., ferromagnetic 
resp. ferroelectric materials, the parameters linking $E$ and $H$ with $D$ and 
$B$ are often assumed to be of the linear form $D=\eps E$ and $B=\mu H$, where 
$\eps$ and $\mu$ are matrix-valued functions describing the permittivity and 
permeability of the medium filling $\Om$. Here we are especially interested in 
the case of an exterior domain $\Om\subset\rthree$, i.e., a connected open subset 
with compact complement. Applying the divergence to the first two equations we 
see that the latter two equations are implicitly included in the first two and 
may be omitted. Hence, neglecting the static case, Maxwell's equations reduce to 
\begin{alignat*}{3}
	-\rot H+\p_{t}\hspace*{-0.04cm}\big(\hspace*{0.02cm}\eps E\hspace*{0.02cm}\big)&=F,
	\qquad\quad & 
	\rot E+\p_{t}\hspace*{-0.04cm}\big(\hspace*{0.02cm}\mu H\hspace*{0.02cm}\big)&=G,
	\qquad\quad &
	&\text{ in }I\times\Om,
\end{alignat*}
with arbitrary right hand sides $F$,\,$G$. Among the wide range of phenomena 
described by these equations one important case is the discussion 
of\,\emph{``time-harmonic''} electro-magnetic fields where all fields vary 
sinusoidally in time with a single frequency $\om\in\C\sm(0)$, i.e.,
\begin{align*}
	E(t,x)=e^{i\om t}E(x), \qquad H(t,x)=e^{i\om t}H(x), 
	\qquad
	G(t,x)=e^{i\om t}G(x), \qquad F(t,x)=e^{i\om t}F(x)	\,.
\end{align*}
Substituting this ansatz into the equations (\,or using Fourier transformation in 
time\,) and assuming that $\eps$ and $\mu$ are time-independent we are lead to 
what is called \emph{``time-harmonic Maxwell's equations''}:
\begin{align}\label{equ:int_max-sys}
	\rot E+i\om\mu H=G,\qquad\quad-\rot H+i\om\eps E=F,\qquad\quad\text{in }\Om\,.
\end{align}
This system equipped with suitable boundary conditions describes, e.g., the 
scattering of time-harmonic electro-magnetic waves which is of high interest in 
many applications like geophysics, medicine, electrical engineering, biology and 
many others.\\[12pt]
First existence results concerning boundary value problems for the time-harmonic
Maxwell system in bounded and exterior domains have been given by M\"uller 
\cite{muller_behavior_1954},\,\cite{muller_randwertprobleme_1952}. He studied 
isotropic and homogeneous media and used integral equation methods.  
Using alternating differential forms, Weyl \cite{weyl_naturlichen_1952} 
investigated these equations on Riemannian manifolds of arbitrary dimension, while 
Werner \cite{werner_randwertprobleme_1965} was able to transfer M\"uller's 
results to the case of inhomogeneous but isotropic media. However, for general 
inhomogeneous anisotropic media and arbitrary exterior domains, boundary integral 
methods are less useful since they heavily depend on the explicit knowledge of 
the fundamental solution and strong assumptions on boundary regularity. That is 
why Hilbert space methods are a promising alternative. Unfortunately, 
Maxwell's equations are non elliptic, hence it is in general not possible to 
estimate all first derivatives of a solution. In \cite{leis_zur_1968} Leis could 
overcome this problem by transforming the boundary value problem for Maxwell's 
system into a boundary value problem for the Helmholtz equation, assuming that 
the medium filling $\Om$, is inhomogeneous and anisotropic within a bounded 
subset of $\Om$. Nevertheless, he still needed boundary regularity 
to gain equivalence of both problems. But also for nonsmooth boundaries Hilbert 
space methods are expedient. In fact, as shown by Leis
\cite{leis_aussenraumaufgaben_1974}, it is sufficient that $\Om$ satisfies a 
certain selection theorem, later called 
\emph{Weck's selection theorem} or \emph{Maxwell compactness property}, 
which holds for a class of boundaries much larger than those accessible by the 
detour over $\ho$ (\,cf. Weck \cite{weck_maxwells_1974}, Costabel 
\cite{costabel_remark_1990} and Picard, Weck, Witsch 
\cite{picard_time-harmonic_2001}\,). See \cite{leis_initial_2013} for a detailed 
monograph and \cite{bauer_maxwell_2016} for the most recent result and an 
overview. The most recent result regarding a solution theory is due to Pauly 
\cite{pauly_polynomial_2012} (\,see also 
\cite{pauly_niederfrequenzasymptotik_2003}\,) and in its structure comparable to 
the results of Picard \cite{picard_wellenausbreitung_1982} and Picard, Weck \& 
Witsch \cite{picard_time-harmonic_2001}.
While all these results above have been obtained for full boundary conditions, in 
the present paper we study the case of mixed boundary conditions. More precisely, 
we are interested in solving the system \eqref{equ:int_max-sys} for 
$\om\in\C\sm(0)$ in an exterior domain $\Om\subset\rthree$, where we assume 
that $\Ga:=\partial\hspace*{0.01cm}\Om$ is decomposed into two relatively open 
subsets $\Ga_1$ and its complement $\Ga_2:=\Ga\sm\ovl{\Ga}_{1}$ and impose 
homogeneous boundary 
conditions, which in classical terms can be written as 
\begin{align}
	\nu\times E=0\text{ on }\Ga_{1},
	\qquad\qquad
	\nu\times H=0\text{ on }\Ga_{2},
    \qquad\qquad
    (\,\nu:\text{ outward unit normal}\,).
\end{align}
Conveniently, we can apply the same methods as in \cite{pauly_low_2006} 
(\,see also Picard, Weck \& Witsch \cite{picard_time-harmonic_2001}, Weck \& 
Witsch \cite{weck_generalized1_1997},\,\cite{weck_complete_1992}\,) to 
construct a solution. Indeed, most of the proofs carry over practically verbatim. 
For $\om\in\C\sm\reals$ the solution theory is obtained by standard Hilbert space 
methods as $\om$ belongs to the resolvent set of the Maxwell operator. In the 
case of $\om\in\reals\sm(0)$, i.e., $\om$ is in the continuous spectrum of the 
Maxwell operator, we use the limiting absorption principle introduced by Eidus 
\cite{eidus_principle_1965} and approximate solutions to $\om\in\reals\sm(0)$ 
by solutions corresponding to $\om\in\C\sm\reals$. This will be sufficient to 
show a generalized Fredholm alternative (\,cf. our main result, Theorem 
\ref{theo:bvp_fred-alt}\,) to hold. The essential ingredients needed 
for the limit process are\\[-8pt]
\begin{itemize}[itemsep=4pt]
	\item the polynomial decay of eigensolutions, 
	\item an a-priori-estimate for solutions corresponding to nonreal 	
		  frequencies, 
	\item a Helmholtz-type decomposition,
	\item and \emph{Weck's local selection theorem (WLST)}, that is,  
		  \begin{align*}
    		\Rztom\cap\eps^{-1}\Dznom
    		\xhookrightarrow{\hspace{0.4cm}}
    		\ltlocomb\text{ is compact.}\\[-8pt]
		  \end{align*}
\end{itemize}
While the first two are obtained by transferring well known results for the 
scalar Helmholtz equation to the time-harmonic Maxwell equations using a suitable 
decomposition of the fields $E$ and $H$, Lemma 4.1, the last one is an assumption 
on the quality of the boundary. As we will see, WLST is an immediate consequence 
of \emph{Weck's selection theorem (WST)}, i.e.,
\begin{align*}
    \Rzt(\Theta)\cap\eps^{-1}\Dzn(\Theta)
    \xhookrightarrow{\hspace{0.4cm}}
    \lt(\Theta)\text{ is compact,}
\end{align*}
which holds in bounded weak Lipschitz domains $\Theta\subset\rthree$, but fails 
in unbounded such as exterior domains (\,cf. Bauer, Pauly, Schomburg 
\cite{bauer_maxwell_2016} and the references therein\,). For strong Lipschitz-
domains see Jochmann \cite{jochmann_compactness_1997} and Fernandes, Gilardis 
\cite{fernandes_magnetostatic_1997}.
%
%
\section{Preliminaries and Notations}
%
%
Let $\Z$, $\N$, $\reals$ and $\C$ be the usual sets of integers, natural, real 
and complex numbers, respectively. Furthermore, let $i$ be the imaginary unit, 
$\Re z$, $\Im z$ and $\ovl{z}$ real part, imaginary part and complex conjugate 
of $z\in\C$, as well as 
\begin{align*}
	\hspace*{-0.7cm}\reals_{+}:=\setb{s\in\reals}{s>0},\quad
	\C_{+}:=\setb{z\in\C}{\Im z\geq 0},
	\quad\text{and,}\quad
	\I:=\setb{(2m+1)/2}{m\in\Z\sm(0)}\,.	
\end{align*}
For $x\in\rn$ with $x\neq 0$ we set $r(x):=|\,x\,|$ and $\xi(x):=x/|\,x\,|$ 
\big(\;$|\,\cdot\,|$\,: Euclidean norm in $\rn$\,\big). Moreover, 
$\U(\rtil)$\;resp.\;$\B(\rtil)$ indicate the open resp.\;closed ball of radius 
$\rtil$ in $\rn$ centered in the origin and we define
\begin{align*}
	\Sp(\rtil):=\B(\rtil)\sm\U(\rtil),
	\qquad
	\cU(\rtil):=\rthree\sm\B(\rtil),
	\qquad
	\G(\rtil,\rhat):=\cU(\rtil)\cap\U(\rhat),
\end{align*}
with $\rhat>\rtil$. If $\map{f}{\X}{\Y}$ is a function mapping $\X$ to $\Y$, 
the restriction of $f$ to a subset $\U\subset\X$ will be\\[1pt] 
marked with $\dsp\restr{f}{U}$ and $\dod(f)$, $\ker(f)$, $\rg(f)$, and $\supp f$ 
denote domain of definition, kernel, range, and support of $f$, respectively. 
Given two functions $\map{f,g}{\reals^{n}}{\C^{k}}$ we write
\begin{align*}
	f=\mathcal{O}(g)\;\text{ for }\;r\To\infty
	\qquad\text{if and only if}\qquad
	\exists\;c>0\;\;\exists\;\rtil>0\;\;\forall\;x\in\cU(\rtil):\quad 
	|\hspace*{0.2mm}f(x)\hspace*{0.2mm}|
	\leq c\cdot g(x)\,.	
\end{align*}
For $\X,\Y$ subspaces of a normed vector space $\V$, $\X+\Y$,
$\X\dpl\Y$ and $\X\oplus\Y$ indicate the sum, the direct sum and the orthogonal 
sum of $\X$ and $\Y$, where in the last case we presume the existence of a scalar 
product $\scp{\,\cdot\,}{\,\cdot\,}_{\V}$ on $\V$. Moreover, 
$\scp{\,\cdot\,}{\,\cdot\,}_{\X\times\Y}$ resp. $\norm{\,\cdot\,}_{\X\times\Y}$ 
denote the natural scalar product resp.\;induced norm on $\X\times\Y$ and if 
$\X=\Y$, we simply use the index $\X$ instead of $\X\times\X$. \\[-6pt]
\noindent
\subsection{General Assumptions and Weighted Sobolev Spaces} 
Unless stated otherwise, from now on and throughout this paper it is assumed 
that $\Om\subset\rthree$ is an exterior weak Lipschitz domain with weak Lipschitz 
interface in the sense of 
\cite[Definition 2.3, Definition 2.5]{bauer_maxwell_2016}, 
which in principle means that $\Ga=\partial\hspace*{0.01cm}\Om$ is a 
Lipschitz-manifold and $\Ga_{1}$\;resp.\;$\Ga_{2}$ are 
Lipschitz-submanifolds of\;$\Ga$. For later purpose we fix $r_{0}>0$ such that 
$\rthree\sm\Om\Subset\U(r_{0})$ and define for arbitrary $\rtil\geq r_{0}$
\begin{align*}
	\Omrtil:=\Om\cap\U(\rtil)\,.
\end{align*}
With $r_{k}:=2^{k}r_{0}$, $k\in\N$ and 
$\tilde{\eta}\in\dsp\ci(\reals)$ such that
\begin{align}\label{equ:prel_cut-off}
	0\leq\tilde{\eta}\leq1,
	\qquad\quad
	\supp\tilde{\eta}\subset(-\infty,2-\da),
	\qquad\quad
	\restr{\tilde{\eta}}{(-\infty,1+\da)}=1,
\end{align}
for some $0<\da<1$, we define functions 
$\eta,\ceta,\eta_{k},\ceta_{k}\in\dsp\cirthree$ by 
\begin{align*}
	\eta(x):=\tilde{\eta}(r(x)/r_{0})\,,
	\quad\;\;
	\ceta(x):=1-\eta(x)\,,
	\quad\;\;
	\eta_{k}(x):=\tilde{\eta}\big(r(x)/r_{k}\big)\,,
	\quad\;\text{resp.}\quad\;
	\ceta_{k}(x):=1-\eta_{k}(x)\,,
\end{align*}
meaning
\begin{alignat*}{7}
	&\supp\eta\subset \B(r_{1})
	\quad &&\text{with}\quad &
	&\eta=1\;\text{on}\;\U(r_{0})\,,
	\qquad &&\qquad &
	&\supp\eta_{k}\subset\U(r_{k+1})
	\quad &&\text{with}\quad &
	&\eta_{k}=1\;\text{on}\;\U(r_{k})\,,\\[-6pt]
	&&&&&\qquad &&\text{resp.}\qquad &&&&&&\\[-6pt]
	&\supp\ceta\subset \cU(r_{0})
	\quad &&\text{with}\quad &
	&\ceta=1\;\text{on}\;\cU(r_{1})\,,
	\qquad &&\qquad &
	&\supp\ceta_{k}\subset \cU(r_{k})
	\quad &&\text{with}\quad &
	&\ceta_{k}=1\;\text{on}\;\cU(r_{k+1})\,.
\end{alignat*}
These functions will later be utilized for particular cut-off procedures.\\[12pt]
Next we introduce our notations for Lebesgue and Sobolev spaces needed in the 
following discussion. Note that we will not indicate whether the elements of 
these spaces are scalar functions or vector fields. This will be always clear 
from the context. The example\footnote{Although the right hand 
sides $0$ and $r^{-2}$ are $\lt(\cU(1))$-functions, we have 
$E=\xi/r\notin\lt(\cU(1))$, but $E\in\ltmo(\cU(1))$} 
\begin{align*}
	E:=\nabla\ln(r)\in\holoc(\cU(1))\,,
	\quad
	\restr{\nu\times E}{\Sp(1)}=0\,,
	\quad
	\rot E=0\in\lt(\cU(1))\,,
	\quad
	\div E=r^{-2}\in\lt(\cU(1))\,,
\end{align*}
shows that a standard $\lt$-setting is not appropriate for exterior domains. Even 
for square-integrable right hand sides we cannot expect to find square-integrable 
solutions. Indeed it turns out that 
we have to work in weighted Lebesgue and Sobolev spaces to develop a solution 
theory.
With $\rho:=(\,1+r^{2}\,)^{1/2}$ we introduce for an arbitrary domain
$\Om\subset\rthree$, $t\in\reals$, and $m\in\N$
\begin{align*}
	\lttom & :=\setB{\ssolu\in\ltlocom}{\rho^{t}\ssolu\in\ltom},\\
  \Hmtom &:=\setB{\ssolu\in\lttom}
   {\forall\;|\alpha|\leq m:\;\p^{\alpha}\hspace*{-0.5mm}\ssolu\in\lttom},\\
  \hmtom &:=\setB{\ssolu\in\lttom}
   {\forall\;|\alpha|\leq m:\;\p^{\alpha}\hspace*{-0.5mm}\ssolu\in\lttpalom},
\end{align*}
\begin{alignat*}{2}
	\Rtom &:=\setB{E\in\lttom}{\rot E\in\lttom}\,,\qquad\quad 
		  &\rtom &:=\setB{E\in\lttom}{\rot E\in\lttpoom},\\
	\Dtom &:=\setB{H\in\lttom}{\div H\in\lttom}\,,\qquad\quad 
		  &\dtom &:=\setB{H\in\lttom}{\div H\in\lttpoom},
\end{alignat*}
where $\alpha=(\alpha_{1},\alpha_{2},\alpha_{3})\in\N^{3}$ is a multi-index and 
$\p^{\alpha}\hspace*{-0.5mm}w:=\p_{1}^{\alpha_{1}}\p_{2}^{\alpha_{2}}
\p_{3}^{\alpha_{3}}\hspace*{-0.5mm}w$, $\rot E$, and $\div H$ are the usual 
distributional or weak derivatives. Equipped with the induced norms
\begin{align*}
	\normlttom{\ssolu}^{2}&:=\normltom{\rho^{t}\ssolu}^{2}\,,\\
	\normHmtom{\ssolu}^{2}&:=\sum_{|\alpha|\leq m}
	 \normlttom{\p^{\alpha}\hspace*{-0.5mm}\ssolu}^{2}\,,\\
	\normhmtom{\ssolu}^{2}&:=\sum_{|\alpha|\leq m}
	 \normlttpalom{\p^{\alpha}\hspace*{-0.5mm}\ssolu}^{2}\,,\\[-20pt]
\end{align*}
\begin{alignat*}{2}
	\normRtom{E}^{2}&:=\normlttom{E}^{2}+\normlttom{\rot E}^{2}\,,
	\qquad\quad 
    &\normrtom{E}^{2}&:=\normlttom{E}^{2}+\normlttpoom{\rot E}^{2}\,,\\
	\normDtom{H}^{2}&:=\normlttom{H}^{2}+\normlttom{\div H}^{2}\,,
	\qquad\quad 
    &\normdtom{H}^{2}&:=\normlttom{H}^{2}+\normlttpoom{\div H}^{2}\,,
\end{alignat*}
they become Hilbert spaces. As usual, the subscript 
``$\mathrm{loc}$'' resp. ``$\mathrm{vox}$'' indicates local 
square-integrability resp.\;bounded support. Please note, that the bold spaces 
with weight $t=0$ correspond to the classical Lebesque and Sobolev spaces and for 
bounded domains {``non-weighted''} and weighted spaces even coincide:
\begin{align*}
	\Om\subset\rthree\text{\;bounded}
	\;\;\Longrightarrow\;\;
	\forall\;t\in\reals:\;\,
	\left\{\begin{aligned}
		\dsp\;\,\Hotom&=\hotom=\hsymb^{1}_{0}(\Om)=\hsymb^{1}(\Om)\\
	    \dsp\;\,\Rtom&=\rtom=\rz(\Om)=\hsymb(\rot,\Om)\\
	    \dsp\;\,\Dtom&=\dtom=\dz(\Om)=\hsymb(\div,\Om)
	       \end{aligned}\right.\\[-9pt]	
\end{align*}
Besides the usual set $\cicom$ of test fields (\,resp.\;test functions\,) we 
introduce   
\begin{align*}
	\cictnom:=\setB{\restr{\varphi}{\Om}}{\varphi\in\cicrthree\;\text{ and }
			  \dist(\supp \varphi, \mathsf{\Ga}_{i})>0\,}\,,\quad \mathrm{i}=1,2
\end{align*}  
to formulate boundary conditions in the weak sense: 
\begin{alignat}{3}\label{equ:prel_weak-bc}
    \Hmttnom &:=\ovl{\dsp\cictnom}^{\,\normHmtom{\cdot}}\,,\quad &
    \Rttnom &:=\ovl{\dsp\cictnom}^{\,\normRtom{\cdot}}\,, \quad &
    \Dttnom &:=\ovl{\dsp \cictnom}^{\,\normDtom{\cdot}}\,,\notag\\[-6pt]
    &&&&&\\[-6pt]
    \hmttnom &:=\ovl{\dsp\cictnom}^{\,\normhmtom{\cdot}}\,,\quad &
    \rttnom &:=\ovl{\dsp\cictnom}^{\,\normrtom{\cdot}}\,, \quad &
    \dttnom &:=\ovl{\dsp \cictnom}^{\,\normdtom{\cdot}}\,.\notag
\end{alignat}
These spaces indeed generalize vanishing scalar, tangential 
and normal Dirichlet boundary conditions even and in particular to boundaries 
for which the notion of a normal vector may not make any sense. 
Moreover, $0$ at the lower left corner denotes vanishing rotation 
resp.\;divergence, e.g.,
\begin{align*}
	\zrtom:=\setb{E\in\rtom}{\rot E=0}\,,\qquad
	\zdttom:=\setb{H\in\dttom}{\div H=0}\,,\qquad\ldots\qquad\,,
\end{align*}    
and if $t=0$ in any of the definitions given above, we will skip the weight, e.g.,
\begin{align*}
	\Hgen{}{m}{}(\Om)=\Hgen{}{m}{0}(\Om)\,,
	\qquad
	\rgen{}{}{\mathsf{\Ga_{1}}}(\Om)=\rgen{}{}{\mathsf{0,\Ga_{1}}}(\Om)\,,
	\qquad 
	\Dgen{}{}{\mathsf{\Ga_{1}}}(\Om)=\Dgen{}{}{\mathsf{0,\Ga_{1}}}(\Om)\,,
	\qquad\ldots\qquad\,.
\end{align*}
Finally we set
\begin{align*}
	\sfX_{\mathrm{<s}}:=\bigcap_{\mathrm{t<s}}\sfX_{\mathrm{t}}
	\qquad\text{and}\qquad
	\sfX_{\mathrm{>s}}:=\bigcup_{\mathrm{t>s}}\sfX_{\mathrm{t}}
	\qquad
	(\,s\in\reals\,)\,,
\end{align*}
for $\X_{\mathrm{t}}$ being any of the spaces above.
If $\Om=\rthree$ we omit the space reference, e.g.,
\begin{align*}
	\Hmt:=\Hmt(\rthree)\,,
	\qquad
	\rtt:=\rtt(\rthree)\,,
	\qquad
	\Dt:=\Dt(\rthree)\,,
	\qquad
	\hmtn:=\hmtn(\rthree)\,,
	\qquad\ldots\qquad.
\end{align*}  
\noindent
The material parameters $\eps$ and $\mu$ are assumed to be $\ka$-admissible 
in the following sense.

\begin{defi}\label{def:bvp_admissible}
	Let $\ka\geq 0$. We call a transformation $\ga$ $\ka$-admissible, if  
	\vspace*{4pt}
	\begin{itemize}[itemsep=6pt]
		\item $\map{\ga}{\Om}{\rttt}$ is an $\li$-matrix field,
		\item $\ga$ is symmetric, i.e.,
			  \begin{align*}
			  		\forall\;E,H\in\ltom:\quad
			  		\scpltom{E}{\ga H}=\scpltom{\ga E}{H}\,,
			  \end{align*}
		\item $\ga$ is uniformly positive definite, i.e.,
			  \begin{align*}
			  	\exists\;c>0\;\;\forall\;E\in\ltom:\quad\scpltom{E}{\ga E}
			  	\geq c\cdot\normltom{E}^2\,,
			  \end{align*}
		\item $\ga$ is asymptotically a multiple of the identity, i.e.,
			  \begin{align*}
			  	\ga=\ga_{0}\cdot\mathbbm{1}+\hat{\ga}\text{ with }\ga_{0}\in
			  	\reals_{+}\text{ and }
			  	\hat{\ga}=\mathcal{O}\big(r^{-\ka}\big)\text{ as }
			  	r\To\infty\,.\\[-8pt]
			  \end{align*}
	\end{itemize}
\end{defi}
\noindent
Then $\eps$,\,$\mu$ are pointwise invertible and $\eps^{-1}$, $\mu^{-1}$ defined 
by
\begin{align*}
	\eps^{-1}(x):=\big(\,\eps(x)\,\big)^{-1}
	\qquad\text{and}\qquad
	\mu^{-1}(x):=\big(\,\mu(x)\,\big)^{-1},\qquad\;x\in\Om\,,
\end{align*}
are also $\ka$-admissible. Moreover,\\[-11pt] 
\begin{align*}
	\scp{\,\cdot\,}{\,\cdot\,}_{\eps}:=\scpltom{\,\eps\,\cdot\,}{\,\cdot\,}
	\quad\;\text{and}\quad\;
	\quad\;\scp{\,\cdot\,}{\,\cdot\,}_{\mu}:=\scpltom{\,\mu\,\cdot\,}{\,\cdot\,}
\end{align*}
define scalar products on $\ltom$ inducing norms equivalent to 
the standard ones. Consequently, 
\begin{align*}
	\lgen{}{2}{\eps}(\Om)
	 :=\big(\,\ltom,\scp{\,\cdot\,}{\,\cdot\,}_{\eps}\,\big)\,,
	\quad\;\lgen{}{2}{\mu}(\Om)
	 :=\big(\,\ltom,\scp{\,\cdot\,}{\,\cdot\,}_{\mu}\,\big)
	\quad\;\text{and}\quad\;
	\ltLaom:=\lgen{}{2}{\eps}(\Om)\times\lgen{}{2}{\mu}(\Om)
\end{align*}
are Hilbert spaces
and we will write
\begin{align*}
	\norm{\cdot}_{\eps},\;\norm{\cdot}_{\mu}\;\norm{\cdot}_{\La}\,,
	\qquad
	\oplus_{\eps},\;\oplus_{\mu},\;\oplus_{\La}
	\qquad\text{and}\qquad
	\perp_{\eps},\;\perp_{\mu},\;\dsp\perp_{\La}
\end{align*}
to indicate the norm, the orthogonal sum and the orthogonal complement in these 
spaces. For further simplification and to shorten notation we also introduce for 
$\eps=\eps_{0}\cdot\mathbbm{1}+\hat{\eps}$ and 
$\mu=\mu_{0}\cdot\mathbbm{1}+\hat{\mu}$ the formal matrix operators
\begin{alignat*}{6}
	&\hspace*{0.6cm}\La:=\ptwomat{\eps}{0}{0}{\mu} 
	 \qquad &&\qquad & 
	&\hspace*{0.6cm}\La^{-1}:=\ptwomat{\eps^{-1}}{0}{0}{\mu^{-1}}
	 \qquad &&\qquad & 
	&\hspace*{0.6cm}\hat{\La}:=\ptwomat{\hat{\eps}}{0}{0}{\hat{\mu}}&&\\[-12pt]
	&&&,&&&&,&&&&\;\,,\\
	&\La\,(E,H)=(\eps E,\mu H) 
	 \qquad &&\qquad & 
	&\La^{-1}\,(E,H)=(\eps^{-1} E, \mu^{-1} H)
	 \qquad &&\qquad & 
	&\hat{\La}\,(E,H)=(\hat{\eps}E,\hat{\mu}H)&&
\end{alignat*}
\begin{alignat*}{4}
	&\hspace*{0.6cm}\Rot:=\ptwomat{0}{-\rot}{\rot}{0}
	\qquad &&\qquad &
	&\M:=i\Lambda^{-1}\Rot
	=\ptwomat{0}{-i\eps^{-1}\rot}{i\mu^{-1}\rot}{0}&&\\[-12pt]
	&&&,&&&&\;\,\;,\\
	&\Rot\,(E,H)=(-\rot H,\rot E)
	\qquad &&\qquad &
	&\hspace*{0.3cm}\M\,(E,H)=(-i\eps^{-1}\rot H,i\mu^{-1}\rot E)&&
\end{alignat*}
\begin{alignat*}{6}
	&\hspace*{0.6cm}\Xi:=\ptwomat{0}{-\xi\times}{\xi\times}{0}
    \qquad &&\qquad &
    &\hspace*{0.6cm}\Laz:=\ptwomat{\eps_{0}}{0}{0}{\mu_{0}}
    \qquad &&\qquad &
    &\hspace*{0.6cm}\tLaz:=\ptwomat{\mu_{0}}{0}{0}{\eps_{0}}&&\\[-12pt]
    &&&,&&&&,&&&&\;\,.\\
    &\Xi\,(E,H)=(-\xi\times H,\xi\times E)
    \qquad &&\qquad &
    &\Laz\,(E,H)=(\eps_{0} E,\mu_{0} H)
    \qquad && \qquad &
    &\tLaz\,(E,H)=(\mu_{0} E,\eps_{0} H)&&\\[-9pt]
\end{alignat*}
Recall $\xi(x)=x/r(x)$.\\[12pt]
We end this section with a Lemma, showing that the spaces defined in
\eqref{equ:prel_weak-bc} indeed generalize vanishing scalar, tangential 
and normal boundary conditions.
\begin{lem}\label{lem:prel_relationships}
	For $t\in\reals$ and $i\in(1,2)$ the following inclusions hold:\\[-8pt]
	\begin{enumerate}[itemsep=4pt,label=$(\alph*)$]
		\item $\;\hmttnom\subset\Hmttnom,
		         \quad
		         \rttnom\subset\Rttnom,
		         \quad
		         \dttnom\subset\Dttnom$
		\item $\;\nabla\Hottnom\subset\zrttnom,
		         \quad
		         \nabla\hottnom\subset\zrtpotnom$
		\item $\;\rot \Rttnom\subset\zdttnom,
		         \quad
		         \rot \rttnom\subset\zdtpotnom$\\[-8pt]
	\end{enumerate}
	Additionally we have for $i,j\in(1,2)$, $i\neq j$:
    \begin{align*}
		\Hottnom &=\boldsymbol{\mathcal{H}}^{1}_{\mathrm{t,\Ga_{i}}}(\Om)
	    		 :=\setB{\,\ssolu\in\Hotom\,}{\;\forall\;\Phi\in\cicntom:\;\;
		         \scpltom{\ssolu}{\div\Phi}
		         =-\scpltom{\nabla \ssolu}{\Phi}\;}\,,\\
	    \Rttnom &=\boldsymbol{\mathcal{R}}_{\mathrm{t,\Ga_{i}}}(\Om)
	    		:=\setB{\,E\in\Rtom\,}{\;\forall\;\Phi\in\cicntom:\;\;
		        \scpltom{E}{\rot \Phi}=\scpltom{\rot E}{\Phi}\;}\,,\\
	    \Dttnom &=\boldsymbol{\mathcal{D}}_{\mathrm{t,\Ga_{i}}}(\Om)
	    		:=\setB{\,H\in\Dtom\,}{\;\forall\;\phi\in\cicntom:\;\;
		        \scpltom{H}{\nabla\phi}=-\scpltom{\div H}{\phi}\;}\,,
    \end{align*}
    and
    \begin{align*}
	    \hottnom &=\mathcal{H}^{1}_{\mathrm{t,\Ga_{i}}}(\Om)
	    		 :=\setB{\,\ssolu\in\hotom\,}{\;\forall\;\Phi\in\cicntom:\;\;
	      		 \scpltom{\ssolu}{\div \Phi}
	      		 =-\scpltom{\nabla \ssolu}{\Phi}\;}\,,\\
        \rttnom &=\mathcal{R}_{\mathrm{t,\Ga_{i}}}(\Om)
        		:=\setB{\,E\in\rtom\,}{\;\forall\;\Phi\in\cicntom:\;\;
		        \scpltom{E}{\rot \Phi}=\scpltom{\rot E}{\Phi}\;}\,,\\
	    \dttnom &=\mathcal{D}_{\mathrm{t,\Ga_{i}}}(\Om)
	    		:=\setB{\,H\in\dtom\,}{\;\forall\;\phi\in\cicntom:\;\;
	    		\scpltom{H}{\nabla \phi}=-\scpltom{\div H}{\phi}\;}\,,
    \end{align*}
    where $\rm(\;$by continuity of the $\lt$-scalar product$ \rm\;)$ 
    we may also replace $\cicntom$ by
    \begin{align*}
    	\Hgen{}{1}{\mathrm{s,\Ga_{j}}}(\Om)\,,\;
    	\Rgen{}{}{\mathrm{s,\Ga_{j}}}(\Om)\,,\;
    	\Dgen{}{}{\mathrm{s,\Ga_{j}}}(\Om)\;
    	\qquad\text{resp.}\qquad 
    	\hgen{}{1}{\mathrm{s,\Ga_{j}}}(\Om)\,,\; 
    	\rgen{}{}{\mathrm{s,\Ga_{j}}}(\Om)\,,\;
    	\dgen{}{}{\mathrm{s,\Ga_{j}}}(\Om)\,,	
    \end{align*}
    with 
	$s+t\geq 0$ resp.\;$s+t\geq -1$.\\[-6pt]
\end{lem}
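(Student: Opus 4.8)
The plan is to handle the three groups of assertions separately: the inclusions $(a)$–$(c)$ are soft consequences of norm comparison and continuity of the differential operators on closures, while the identifications at the end carry the genuine work. Throughout, recall that $\Hmttnom,\Rttnom,\dots$ are by definition closures of the \emph{same} test class $\cictnom$ in the various graph norms.

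For $(a)$ I would just compare norms. Since $\rho\geq1$, the weight $\rho^{t+|\alpha|}$ dominates $\rho^{t}$, so on $\cictnom$ one has $\normHmtom{\cdot}\leq\normhmtom{\cdot}$, and likewise $\normRtom{\cdot}\leq\normrtom{\cdot}$ and $\normDtom{\cdot}\leq\normdtom{\cdot}$, because $\normlttom{\rot\cdot}\leq\normlttpoom{\rot\cdot}$ etc. Hence a sequence that is Cauchy in the lowercase norm is Cauchy in the uppercase norm with the same $\ltlocom$–limit, giving the lowercase space as a subspace of the uppercase one. For $(b)$ and $(c)$ I would use that $\nabla$ and $\rot$ map $\cictnom$ into $\cictnom$ (supports stay away from $\Ga_i$) and that $\rot\nabla=0$, $\div\rot=0$: given $u\in\Hottnom$ and $\phi_n\in\cictnom$ with $\phi_n\to u$ in $\Hotom$, the fields $\nabla\phi_n\in\cictnom$ satisfy $\nabla\phi_n\to\nabla u$ in $\lttom$ and $\rot\nabla\phi_n=0$, so $\nabla\phi_n\to\nabla u$ in $\Rtom$ and $\nabla u\in\zrttnom$. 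The polynomially weighted version $\nabla\hottnom\subset\zrtpotnom$ is identical except that first derivatives now converge in $\lttpoom$, placing $\nabla u$ in the base space of weight $t+1$; the curl assertions in $(c)$ follow the same pattern with $\rot$ for $\nabla$ and $\div$ for $\rot$.

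For the identifications I would split each equality into two inclusions. The inclusion ``$\subseteq$'' is Green's formula: for $\phi_n\in\cictnom$ approximating $E\in\Rttnom$ and any $\Phi\in\cicntom$, the classical identity $\scpltom{\rot\phi_n}{\Phi}-\scpltom{\phi_n}{\rot\Phi}=\int_{\Ga}(\nu\times\phi_n)\cdot\ovl{\Phi}\,ds$ has vanishing right-hand side, since $\phi_n$ vanishes near $\Ga_i$ while $\Phi$ vanishes near $\Ga_j=\Ga\sm\ovl{\Ga}_i$; passing to the limit is legitimate because $\Phi$ and $\rot\Phi$ are compactly supported and therefore pair continuously with $\lttom$–convergent sequences, and the $\Ho$– and $\D$–versions use the gradient and divergence Green formulas verbatim. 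The reverse inclusion ``$\supseteq$'' is the substantial point. Given $E\in\Rtom$ satisfying the weak relation against all $\Phi\in\cicntom$, I would first cut off at infinity. A short commutator computation shows $\eta_k E$ still satisfies the weak relation: the two terms $\scpltom{E}{\nabla\eta_{k}\times\Phi}$ and $\scpltom{\nabla\eta_{k}\times E}{\Phi}$ cancel by the scalar triple product identity once $\eta_{k}\Phi\in\cicntom$ is used as an admissible test field. Meanwhile $\eta_k E\to E$ in $\Rtom$ by dominated convergence, the curl commutator $\nabla\eta_{k}\times E$ being $\mathcal{O}(r_{k}^{-1})$ on the annulus $\G(r_k,r_{k+1})$ and hence tending to $0$ in $\lttom$. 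This reduces matters to a field of bounded support in the bounded weak Lipschitz domain $\Om(r_{k+2})$, where the equality of weakly and strongly defined boundary-condition spaces is exactly the result established in \cite{bauer_maxwell_2016}; transporting the resulting approximants by test fields supported away from $\Ga_i$ back to $\Om$ (they may be chosen to vanish near the artificial sphere $\Sp(r_{k+2})$) yields $E\in\Rttnom$.

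Finally, the admissibility of the enlarged test classes in the last display follows by approximating such a $\Phi$ by $\cicntom$–fields in the respective graph norm and passing to the limit in the unweighted pairings: a Cauchy–Schwarz split $\rho^{a}\cdot\rho^{-a}$ together with $\rho^{-t}\leq\rho^{s}$ renders each pairing continuous precisely when $s+t\geq0$ for the uppercase and $s+t\geq-1$ for the lowercase spaces, the weaker bound reflecting the extra unit of decay carried by $\rot E$, $\div H$, $\nabla u$ in the lowercase setting. The main obstacle is the reverse inclusion, i.e.\ the honest identification of the weak boundary condition with approximability; for bounded domains this is the technical core imported from \cite{bauer_maxwell_2016}, and the only genuinely new ingredient here is the weighted cut-off at infinity and the verification that it preserves the weak condition.
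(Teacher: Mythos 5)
Your proposal is correct and follows essentially the same route as the paper: the easy inclusions by norm comparison and by applying $\nabla$, $\rot$ to approximating sequences, and, for the crucial reverse inclusion of the identifications, the cut-off $\eta_k E$ at infinity, the commutator computation showing the weak condition survives the cut-off, reduction to the weak-equals-strong result for bounded weak Lipschitz domains from \cite{bauer_maxwell_2016}, and extension of the smooth approximants by zero. The only cosmetic difference is that you verify the weak relation for $\eta_k E$ on all of $\Om$ and then restrict to the bounded domain (legitimate since $\supp(\eta_k E)$ lies strictly inside the ball), whereas the paper checks it directly on $\Om(2r_k)$; the substance is identical.
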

\begin{proof} 
	As representatives of the arguments we show
	\begin{align*}
		\mathrm{(i)}\;\rot\Rtnom\subset\zdtnom
		\qquad\text{and}\qquad
		\mathrm{(ii)}\;\Rttom=\boldsymbol{\mathcal{R}}_{\mathrm{t,\Gat}}(\Om)\,.
	\end{align*}
	For $E\in\rot\Rtnom$ there exists a sequence 
	$(\mathcal{E}_{n})_{n\in\N}\subset\dsp\cicnom$ such that 
	$\rot\mathcal{E}_{n}\To E$ in $\lttom$. Then 
	\begin{align*}
		\forall\phi\in\cicom:\quad \scpltom{E}{\nabla\phi}
		=\lim_{n\rightarrow\infty}\scpltom{\rot\mathcal{E}_n}{\nabla\phi}
		=-\lim_{n\rightarrow\infty}
		 \scpltom{\div\big(\rot\mathcal{E}_n\big)}{\phi}=0\,,	
	\end{align*}
	hence $E$ has vanishing divergence and $(E_{n})_{n\in\N}$ defined by 
	$E_{n}:=\rot \mathcal{E}_{n}$ satisfies
	\begin{align*}
		(E_{n})_{n\in\N}\subset\cicnom,\quad
		E_{n}\xrightarrow{\;\;\lttom\;\;} E
		\quad\text{ and}\quad
		\div E_{n}=\div\big(\rot\mathcal{E}_{n}\big)
		=0\xrightarrow{\;\;\lttpoom\;\;}0\,.
	\end{align*}
	Thus $E\in\zdtnom$ and we have shown $\mathrm{(i)}$. Let us show 
	$\mathrm{(ii)}$. We clearly have 
	$\Rttom\subset\boldsymbol{\mathcal{R}}_{\mathrm{t,\Gat}}(\Om).$ 
	For the other direction, let 
	$E\in\dsp\boldsymbol{\mathcal{R}}_{\mathrm{t,\Gat}}(\Om)$ and $\delta>0$. 
	Using the cut-off function from above we define $(E_{k})_{k\in\N}$ by 
	$E_{k}:=\eta_{k}E$. Then 
	$E_{k}\in\boldsymbol{\mathcal{R}}_{\tilde{\Ga}_\mathrm{1}}
	\big(\Om(2r_{k})\big)$, $\tilde{\Ga}_{\mathrm{1}}:=\Gat\cup\Sp(2r_{k})$, 
	since for $\Phi\in\dsp\cicn\big(\Om(2r_{k})\big)$ it holds by 
	$\eta_k\Phi\in\cicnom$
	\begin{align*}
		\scp{E_{k}}{\rot\Phi}_{\lt(\Om(2r_{k}))}
		&=\scpltom{\eta_{k}E}{\rot\Phi}\\
		&=\scpltom{E}{\rot(\eta_{k}\Phi)}-\scpltom{E}{\nabla\eta_{k}\times\Phi}\\
		&=\scp{\eta_{k}\rot E+\nabla\eta_{k}\times E}{\Phi}_{\lt(\Om(2r_{k}))}
		 =\scp{\rot E_{k}}{\Phi}_{\lt(\Om(2r_{k}))}\,.	
	\end{align*}
	Now observe, that by means of monotone convergence we have
	\footnote{Here and hereafter, $c>0$ 
	denotes some generic constant only depending on the indicated quantities.}
	\begin{align*}
		\hspace*{-0.75cm}\normRtom{E-E_{k}}
			=\normRtom{\ceta_{k}E}
			\leq c\cdot\Big(\norm{E}_{\Rt(\cU(r_{k}))}
			  +\frac{1}{2^k}\cdot\normlttom{E}\Big)
			\xrightarrow{\hspace*{0.6cm}}0\,,	
	\end{align*}
	hence we can choose $\hat{k}>0$ such that 
	$\normRtom{E-E_{\hat{k}}}<\delta/2\,.$
	Moreover $\Om(2r_{\hat{k}})=\Om\cap\U(2r_{\hat{k}})$ is a bounded weak 
	Lipschitz domain and therefore 
	(\,cf. \cite[Section 3.3]{bauer_maxwell_2016}\,)
	$\boldsymbol{\mathcal{R}}_{\tilde{\Ga}_\mathrm{1}}
	 \big(\hspace*{0.02cm}\Om(2r_{\hat{k}})\big)
	 =\Rsymb_{\tilde{\Ga}_\mathrm{1}}
	 \big(\hspace*{0.02cm}\Om(2r_{\hat{k}})\big),$
	yielding the existence of some
	$\Psi\in\dsp\sfC^{\substack{\infty\\[-5pt]\phantom{\infty}}}
	 _{\mathrm{\tilde{\Ga}_{1}}}\big(\hspace*{0.02cm}\Om(2r_{\hat{k}})\big)$ 
	such that 
	\begin{align*}
		\norm{E_{\hat{k}}-\Psi}_{\Rt(\hspace*{0.01cm}\Om(2r_{\hat{k}}))}
		\leq c\cdot\norm{E_{\hat{k}}-\Psi}_{\R(\Om(2r_{\hat{k}}))}<\da/2\,.
	\end{align*}
	Extending $\Psi$ by zero to $\Om$ we obtain (\,by abuse of notation\,) 
	$\Psi\in\dsp\cictom$ with 
	\begin{align*}
		\normRtom{E-\Psi}
			&\leq\normRtom{E-E_{\hat{k}}}
			 +\norm{E_{\hat{k}}-\Psi}_{\Rt(\hspace*{0.01cm}\Om(2r_{\hat{k}}))}
		 	 <\frac{\da}{2}+\frac{\da}{2}=\delta\,,\\[-10pt]
	\end{align*}
	which completes the proof.
\end{proof}
\subsection{Some Functional Analysis} 
Let $\mathsf{H}_{1}$ and $\mathsf{H}_{2}$ be Hilbert spaces. With 
$\mathrm{L}(\hspace*{0.4mm}\mathsf{H}_{1},\mathsf{H}_{2})$ and 
$\mathrm{B}(\hspace*{0.5mm}\mathsf{H}_{1},\mathsf{H}_{2})$ 
we introduce the sets of linear resp.\;bounded linear operators mapping 
$\mathsf{H}_{1}$\hspace*{0.04cm}to $\mathsf{H}_{2}$. For 
$\map{\A}{\dod(\A)\subset\mathsf{H}_{1}}{\mathsf{H}_{2}}$ linear, closed, 
and densely defined, the adjoint 
$\map{\A^{\ast}}{\dod(\A^{\ast})\subset\mathsf{H}_{2}}{\mathsf{H}_{1}}$ is
characterized by 
\begin{align*}
	\scp{\A\,x}{y}_{\mathsf{H}_2}=\scp{x}{\A^{\ast}\,y}_{\mathsf{H}_1}
	\qquad\forall\;x\in\dod(\A),\;y\in\dod(\A^{\ast})\,.
\end{align*}
By the projection theorem we have the following Helmholtz type decompositions
\begin{align*}
	\mathsf{H}_{1}=\ovl{\rg(\dsp\A^{\ast})}\oplus\ker(\A),
	\qquad\text{and}\qquad 
	\mathsf{H}_{2}=\ovl{\rg(\A)}\oplus\ker(\A^{\ast})\,,	
\end{align*}
which we use to define the corresponding reduced operators 
$\calA:=\restr{\A}{\ker(\A)^{\perp}}$, 
$\calA^{\ast}:=\restr{\A^{\ast}}{\ker(\A^{\ast})^{\perp}}$, i.e.,
\begin{alignat*}{3}
	&\map{\calA}{\dod(\calA)\subset\ovl{\rg(\dsp\A^{\ast})}}{\ovl{\rg(\A)}}
	\;\;\qquad &&\qquad &
	&\map{\calA^{\ast}}{\dod(\calA^{\ast})\subset\ovl{\rg(\A)}}
	     {\ovl{\rg(\dsp\A^{\ast})}}\\[-8pt]
	&\;\qquad &&\text{resp.}\;\;\;\qquad &&\\[-8pt]
	&\hspace*{0.5cm}\dod(\calA)=\dod(\A)\cap\ovl{\rg(\dsp\A^{\ast})}
	\qquad &&\qquad & 
	&\hspace*{0.5cm}\dod(\calA^{\ast})=\dod(\A^{\ast})\cap\ovl{\rg(\A)}\,.
\end{alignat*}
These operators are also closed, densely defined and indeed adjoint to each 
other. Moreover, by definition $\calA$ and $\calA^{\ast}$ are injective and 
therefore the inverse operators
\begin{align*}
	\map{\calA^{-1}}{\rg(\A)}{\dod(\calA)}
	\;\qquad\text{and}\qquad
	\map{\big(\calA^{\ast})^{-1}}{\rg(\dsp\A^{\ast})}{\dod(\calA^{\ast})}
\end{align*}
exist. The pair $(\calA,\calA^{\ast})$ satisfies the following result 
of the so called \emph{Functional Analysis Toolbox}, see 
e.g.\;\cite[Section 2]{pauly_solution_2016}, from which we 
will derive some Poincar{\'e} type estimates for the time-harmonic Maxwell 
operator $(\,\calM-\om\,)$ (\,cf. Remark \ref{rem:bvp_toolbox-real} and Remark 
\ref{rem:bvp_toolbox-komplex}\,).
\begin{lem}\label{lem:prel_toolbox}
	The following assertions are equivalent:\\[-9pt]
	\begin{enumerate}[itemsep=5pt]
		\item[$(1)$] $\dsp\exists\;c_{\A}\in(0,\infty)\;\,$
					 $\forall\;x\in \dod(\calA):$
					 $\quad\norm{x}_{\mathsf{H}_{1}}
					  \leq c_{\A}\norm{\A\,x}_{\mathsf{H}_{2}}$.
		\item[$(1^{\ast})$] $\dsp\exists\;c_{\A^{\ast}}\in(0,\infty)\;\,$
							$\forall\;y\in \dod(\calA^{\ast}):$
						    $\quad\norm{y}_{\mathsf{H}_{2}}
						     \leq c_{\A^{\ast}}
						     \norm{\A^{\ast}\,y}_{\mathsf{H}_{1}}$.
		\item[$(2)$] $\rg(\A)=\rg(\calA)$ is closed in $\mathsf{H}_{2}$.
		\item[$(2^{\ast})$] $\rg(\A^{\ast})=\rg(\calA^{\ast})$ ist closed in 
		                    $\mathsf{H}_{1}$.
		\item[$(3)$] $\dsp\map{\calA^{-1}}{\rg(\A)}{\dod(\calA)}$ is continuous.
		\item[$(3^{\ast})$] $\dsp\map{\big(\calA^{\ast}\big)^{-1}}
							{\rg(\A^{\ast})}{\dod(\calA^{\ast})}$ is continuous.
	\end{enumerate}	
\end{lem}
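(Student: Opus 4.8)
The plan is to close the cyclic implication $(1)\Rightarrow(2)\Rightarrow(3)\Rightarrow(1)$ among the unstarred assertions, then obtain $(1^{\ast})\Leftrightarrow(2^{\ast})\Leftrightarrow(3^{\ast})$ by applying the identical argument to the pair $(\A^{\ast},\calA^{\ast})$, and finally to bridge the two groups. The equivalence $(1)\Leftrightarrow(3)$ is mere unravelling of definitions: since $\calA$ is injective with $\rg(\calA)=\rg(\A)$, the inverse $\map{\calA^{-1}}{\rg(\A)}{\dod(\calA)}$ exists, and its continuity says exactly $\norm{\calA^{-1}y}_{\mathsf{H}_{1}}\leq c\,\norm{y}_{\mathsf{H}_{2}}$ for all $y\in\rg(\A)$; inserting $y=\A x=\calA x$ with $x\in\dod(\calA)$ converts this into the estimate in $(1)$, and conversely.

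Next I would prove $(1)\Rightarrow(2)$ by a Cauchy-sequence argument: if $(x_{n})\subset\dod(\calA)$ with $\A x_{n}\to y$ in $\mathsf{H}_{2}$, then applying $(1)$ to $x_{n}-x_{m}$ shows $(x_{n})$ is Cauchy, so $x_{n}\to x$; closedness of $\A$ (hence of $\calA$) forces $x\in\dod(\calA)$ and $\calA x=y$, whence $y\in\rg(\calA)=\rg(\A)$ and the range is closed. For $(2)\Rightarrow(3)$ I would observe that $\calA^{-1}$ is closed (being the inverse of the injective closed operator $\calA$) and, by $(2)$, defined on the Banach space $\rg(\A)$, so the closed graph theorem delivers its boundedness. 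This completes the cycle for the unstarred trio.

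The starred trio follows by running the very same three steps on $\A^{\ast}$ in place of $\A$. This is legitimate because $\A$ is closed and densely defined, so $\A^{\ast}$ is again closed and densely defined with $(\A^{\ast})^{\ast}=\A$, and its reduced operator is precisely $\calA^{\ast}$ as introduced in the excerpt; thus no new work is needed beyond checking that these structural facts genuinely hold.

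The step I expect to be the main obstacle is the bridge between the starred and unstarred assertions. The cleanest self-contained route is the adjoint--inverse identity $(\calA^{-1})^{\ast}=(\calA^{\ast})^{-1}$: once $(2)$ holds, $\calA$ is a bijection of $\dod(\calA)$ onto the full space $\ovl{\rg(\A)}$ with bounded inverse, so $(\calA^{-1})^{\ast}$ is a bounded, everywhere-defined operator on $\ovl{\rg(\A^{\ast})}$; identifying it with $(\calA^{\ast})^{-1}$ forces $\rg(\calA^{\ast})=\ovl{\rg(\A^{\ast})}$, which is $(2^{\ast})$, and simultaneously yields the bound in $(3^{\ast})$. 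The delicate point is justifying this identity while keeping careful track that the Hilbert-space adjoint of $\calA$, taken \emph{between the reduced spaces} $\ovl{\rg(\A^{\ast})}$ and $\ovl{\rg(\A)}$, really coincides with $\calA^{\ast}$. Alternatively one may simply cite Banach's closed range theorem, which asserts directly that $\rg(\A)$ is closed if and only if $\rg(\A^{\ast})$ is closed and hence gives $(2)\Leftrightarrow(2^{\ast})$ outright; everything else in the argument is soft functional analysis.
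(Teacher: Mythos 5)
Your proof is sound, but it cannot be compared line\--by\--line with the paper, because the paper never proves this lemma: it is quoted as part of the \emph{Functional Analysis Toolbox} with a reference to \cite[Section 2]{pauly_solution_2016}, so the comparison has to be made with the standard toolbox argument from that reference. Your cycle $(1)\Rightarrow(2)\Rightarrow(3)\Rightarrow(1)$ --- Cauchy sequences plus closedness of $\A$ (and closedness of $\ker(\A)^{\perp}=\ovl{\rg(\A^{\ast})}$, so that the limit stays in $\dod(\calA)$), then the closed graph theorem on the closed subspace $\rg(\A)$, then unravelling of definitions --- is exactly that argument. Where you genuinely differ is the bridge between the starred and unstarred groups: you reach for the identity $(\calA^{-1})^{\ast}=(\calA^{\ast})^{-1}$ or for Banach's closed range theorem. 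Both are legitimate here, since the paper has already recorded that $\calA$ and $\calA^{\ast}$ are mutually adjoint as operators between $\ovl{\rg(\A^{\ast})}$ and $\ovl{\rg(\A)}$, but they are heavier than needed. The toolbox proof bridges instead via the elementary implication $(1)\Rightarrow(1^{\ast})$: if $(1)$ holds, then so does $(2)$, hence every $y\in\dod(\calA^{\ast})\subset\ovl{\rg(\A)}=\rg(\calA)$ can be written as $y=\calA\,x$ with $x\in\dod(\calA)$, and
\[
	\norm{y}_{\mathsf{H}_{2}}^{2}
	=\scp{\calA\,x}{y}_{\mathsf{H}_{2}}
	=\scp{x}{\A^{\ast}\,y}_{\mathsf{H}_{1}}
	\leq \norm{x}_{\mathsf{H}_{1}}\norm{\A^{\ast}\,y}_{\mathsf{H}_{1}}
	\leq c_{\A}\norm{\A\,x}_{\mathsf{H}_{2}}\norm{\A^{\ast}\,y}_{\mathsf{H}_{1}}
	=c_{\A}\norm{y}_{\mathsf{H}_{2}}\norm{\A^{\ast}\,y}_{\mathsf{H}_{1}}\,,
\]
which is $(1^{\ast})$ with $c_{\A^{\ast}}\leq c_{\A}$; exchanging the roles of $\A$ and $\A^{\ast}$ (legitimate since $(\A^{\ast})^{\ast}=\A$) gives the reverse inequality and closes the loop. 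Besides being softer, this route is precisely what produces the equality of best constants $c_{\A}=c_{\A^{\ast}}=\norm{\calA^{-1}}=\norm{(\calA^{\ast})^{-1}}$ recorded immediately after the lemma, which your closed\--range\--theorem shortcut does not deliver without extra work.
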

\noindent
Note that for the ``best'' constants $c_{\A}$ and $c_{\A^{\ast}}$ it holds
\begin{align*}
	\norm{\calA^{-1}}_{\rg(\A),\rg(\A^{\ast})}
	=c_{\A}
	=c_{\A^{\ast}}
	=\norm{(\calA^{\ast})^{-1}}_{\rg(\A^{\ast}),\rg(\A)}\,.		
\end{align*}
%
%
\section{Solution Theory for Time-Harmonic Maxwell Equations}\label{sec:th-bvp} 
%
%
\noindent
As mentioned above we shall treat the time-harmonic Maxwell equations with mixed 
boundary conditions
\begin{alignat}{4}
	-\rot H+i\om\eps E&=F\;\,&&\;\text{in}\;\;\,\Om,
	\qquad\qquad & 
	E\times\nu &=0\;&&\;\text{on}\;\;\,\Gat,\notag\\[-6pt]~\\[-6pt]
	\rot E+i\om\mu H&=G\;\;&&\;\text{in}\;\;\,\Om,
	\qquad\qquad & 
	H\times\nu&=0\;\,&&\;\text{on}\;\;\,\Gan,\notag
\end{alignat}
in an exterior weak Lipschitz domain $\Om\subset\rthree$ and for frequencies $
\om\in\C\sm(0)$. Moreover, we suppose that the material parameters $\eps$ and $
\mu$ are $\ka$-admissible with $\ka\geq 0$. Using the abbreviations from above and 
rewriting 
\[\solu:=(E,H)\qquad,\qquad\rhs:=i\La^{-1}(-F,G)\,,\] 
the weak formulation of these boundary value problem reads:\\[-10pt]
\begin{align}\label{equ:bvp_weak-max-sys}
	\textit{For }
	\rhs\in\ltloc(\Omb)
	\textit{ find }
	\solu\in\rloct(\Omb)\times\rlocn(\Omb)
	\textit{ such that }
	(\M-\om\,)\,\solu=\rhs.
\end{align}
We shall solve this problem using polynomially weighted Hilbert spaces. In doing 
so we avoid additional assumptions on boundary regularity for $\Om$, since only 
a compactness result comparable to Rellich's selection theorem is needed. More 
precisely, we will show that $\Om$ satisfies 
\emph{"Weck's (local) selection theorem"}, 
also called\;\emph{"(local) Maxwell compactness property"}, which in 
fact is also an assumption on the quality of the boundary and in some sense 
supersedes assumptions on boundary regularity.
\begin{defi}\label{def:bvp_WST}
    Let $\ga$ be $\ka$-admissible with 
    $\ka\geq 0$ and let $\Om\subset\rthree$ be open. $\Om$ satisfies\;{"Weck's local selection theorem"} 
    (WLST) (\,or has the\;{"local Maxwell compactness property"}\,), 
    if the embedding
    \begin{align}       
    	\Rztom\cap\ga^{-1}\Dznom\xhookrightarrow{\hspace{0.4cm}}\ltlocomb
    \end{align}
    is compact. $\Om$ satisfies {"Weck's selection theorem"} (WST) (\,or has
    the {"Maxwell compactness property"}\,) if the embedding
    \begin{align}
	 	\Rzt(\Om)\cap\ga^{-1}\Dzn(\Om)
	 	\xhookrightarrow{\hspace{0.4cm}}\lt(\Om)
    \end{align}
    is compact.
\end{defi}
\begin{rem}\label{rem:bvp_WST}
	Note that Weck's (local) selection theorem is essentially independent of
	$\ga$\;meaning that a domain $\Om\subset\rthree$ satisfies WST resp.\;WLST, 
	if and only if the imbedding 
	\begin{align*}
		 \Rztom\cap\Dznom\xhookrightarrow{\hspace{0.4cm}}\ltom
		 \qquad\text{resp.}\qquad
		 \Rztom\cap\Dznom\xhookrightarrow{\hspace{0.4cm}}\ltlocomb
	\end{align*}
	is compact. The proof is practically identical with the one of\, 
	{\rm \cite[Lemma 2]{picard_elementary_1984}} 
	\big(\,see also \cite{weck_maxwells_1974},\,\cite{weber_local_1980}\;\big).
\end{rem}
\begin{lem}\label{lem:bvp_WST}
	Let $\ga$ be $\ka$-admissible with $\ka\geq 0$
	and let $\Om\subset\rthree$ be an exterior domain. Then the following 
    statements are equivalent:
	\vspace*{3pt}
	\begin{enumerate}[label=\rm$(\alph*)$,leftmargin=30pt,itemsep=6pt]
		\item $\Om$ satisfies WLST.
		\item For all $\rtil>r_{0}$ the imbedding
              \begin{align*}
                  \Rgen{}{}{\tilde{\Ga}_{1}}(\Omrtil)\cap\ga^{-1}&\Dzn(\Omrtil)
                  \xhookrightarrow{\hspace{0.4cm}}\lt(\Omrtil)
              \end{align*}
              with $\tilde{\Ga}_{1}:=\Ga_{1}\cup\Sp(\rtil)$ is compact, i.e., $\Omrtil$ satisfies WST.
		\item For all $\rtil>r_{0}$ the imbedding
              \begin{align*}
                  \Rgen{}{}{{\Ga}_{1}}(\Omrtil)\cap\ga^{-1}&
                  \Dgen{}{}{\tilde{\Ga}_{2}}(\Omrtil)
                  \xhookrightarrow{\hspace{0.4cm}}\lt(\Omrtil)
              \end{align*}
              with $\tilde{\Ga}_{2}:=\Ga_{2}\cup\Sp(\rtil)$ is compact, i.e., $\Omrtil$ satisfies WST.
        \item For all $s,t\in\reals$ with $t<s$ the imbedding
		      \begin{align*}
		          \Rstom\cap\ga^{-1}&\Dsnom
		          \xhookrightarrow{\hspace{0.4cm}}\lttom
		      \end{align*}
		      is compact.
    \end{enumerate}
\end{lem}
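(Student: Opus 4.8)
The plan is to reduce everything to the case $\ga=\mathbbm{1}$ and then to play off three regimes against each other: the \emph{local} statement $(a)$, the \emph{truncated} statements $(b),(c)$, and the \emph{weighted} statement $(d)$. By Remark \ref{rem:bvp_WST} each of the four embeddings is compact for a $\ka$-admissible $\ga$ if and only if it is compact for $\ga=\mathbbm{1}$, so from now on I work with the full Maxwell spaces $\Rztom\cap\Dznom$ and their truncated and weighted analogues, dropping $\ga$. I would prove $(d)\Rightarrow(a)$, $(a)\Rightarrow(d)$, then $(b)\Rightarrow(a)$ and $(c)\Rightarrow(a)$, and finally the reverse implications $(a)\Rightarrow(b)$ and $(a)\Rightarrow(c)$, which carry the real difficulty. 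The implication $(d)\Rightarrow(a)$ is the easy one: taking $s=0$ and any $t<0$ in $(d)$ gives a compact embedding into $\lttom$, and since on every bounded set $\Om(\rtil)$ the weight $\rho^{t}$ is bounded above and below, $\lttom$-convergence forces $\lt(\Om(\rtil))$-convergence for every $\rtil$, i.e.\ convergence in $\ltlocomb$, which is exactly $(a)$.

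For $(a)\Rightarrow(d)$ I would use the cut-off functions $\eta_{k},\ceta_{k}$ from \eqref{equ:prel_cut-off} and a Rellich-type tail estimate. Given a sequence bounded by $M$ in the weight-$s$ space, split each field as $u_{n}=\eta_{k}u_{n}+\ceta_{k}u_{n}$. On $\supp\ceta_{k}\subset\cU(r_{k})$ one has $\rho^{t}\le r_{k}^{t-s}\rho^{s}$ because $t-s<0$ and $\rho\ge r_{k}$ there, whence $\norm{\ceta_{k}u_{n}}_{\lttom}\le r_{k}^{t-s}\norm{u_{n}}_{\ltsom}\le r_{k}^{t-s}M$, which tends to $0$ as $k\to\infty$ uniformly in $n$. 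The inner part $\eta_{k}u_{n}$ is supported in $\Om(r_{k+1})$ and, by the product rule for $\rot$ and $\div$ together with Lemma \ref{lem:prel_relationships} (which guarantees that multiplication by a smooth scalar preserves the boundary conditions), is bounded in $\Rztom\cap\Dznom$; by $(a)$ it has a subsequence converging in $\ltlocomb$, hence in $\lttom$ since weights are harmless on bounded support. A diagonal argument — extract on the inner part, then send $k\to\infty$ to annihilate the tail — yields a subsequence that is Cauchy in $\lttom$, giving $(d)$.

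The implications $(b)\Rightarrow(a)$ and $(c)\Rightarrow(a)$ follow by the same localisation. For $(u_{n})$ bounded in $\Rztom\cap\Dznom$, the field $\eta_{k}u_{n}$ vanishes in a neighbourhood of $\Sp(r_{k+1})$, so its restriction to $\Om(r_{k+1})$ lies in $\Rgen{}{}{\tilde{\Ga}_{1}}(\Om(r_{k+1}))\cap\Dgen{}{}{\tilde{\Ga}_{2}}(\Om(r_{k+1}))$ with $\tilde{\Ga}_{1}=\Ga_{1}\cup\Sp(r_{k+1})$ and $\tilde{\Ga}_{2}=\Ga_{2}\cup\Sp(r_{k+1})$; this space is contained both in the space of $(b)$ and in the space of $(c)$. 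Thus either hypothesis produces a subsequence converging in $\lt(\Om(r_{k+1}))$, and since $\eta_{k}=1$ on $\Om(r_{k})$, a diagonal argument over $k$ gives convergence in $\ltlocomb$, i.e.\ $(a)$.

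The crux is $(a)\Rightarrow(b)$ (and symmetrically $(a)\Rightarrow(c)$), and here the naive zero-extension fails: for $v\in\Rgen{}{}{\tilde{\Ga}_{1}}(\Omrtil)\cap\Dznom(\Omrtil)$ the tangential trace on $\Sp(\rtil)$ vanishes, so $\rot$ extends cleanly, but the normal trace on $\Sp(\rtil)$ is uncontrolled, so the extension acquires a surface divergence and leaves $\Dznom$. I would circumvent this with a partition of unity $1=\varphi_{0}+\varphi_{1}$ on $\ovl{\Om(\rtil)}$, where $\varphi_{1}$ is supported in a neighbourhood of $\Ga$ disjoint from $\Sp(\rtil)$ and $\varphi_{0}$ is supported away from $\Ga$. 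For a bounded sequence $(v_{n})$ in the space of $(b)$, the piece $\varphi_{1}v_{n}$ has support disjoint from $\Sp(\rtil)$, so its zero-extension stays in $\Rztom\cap\Dznom$ with no surface term, is bounded there, and by $(a)$ has a subsequence converging in $\ltlocomb$, in particular in $\lt(\Om(\rtil))$. The piece $\varphi_{0}v_{n}$ has support disjoint from $\Ga$; since $\rthree\sm\Om\Subset\U(r_{0})$ and $\rtil>r_{0}$, the set $\Om(\rtil)$ coincides with the smooth ball $\U(\rtil)$ near $\Sp(\rtil)$, so the zero-extension of $\varphi_{0}v_{n}$ is a field on $\U(\rtil)$ with vanishing tangential trace on the smooth sphere, and Weck's selection theorem on the ball $\U(\rtil)$ (a bounded, smooth, hence weak Lipschitz domain; cf.\ \cite{bauer_maxwell_2016}) provides a convergent subsequence in $\lt(\U(\rtil))$. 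Summing the two convergent subsequences gives convergence of $v_{n}$ in $\lt(\Om(\rtil))$, i.e.\ $(b)$; the proof of $(a)\Rightarrow(c)$ is verbatim with the roles of $\rot$ and $\div$, of tangential and normal traces, and of $\Ga_{1}$ and $\Ga_{2}$ interchanged. The main obstacle, as this shows, is exactly the artificial boundary $\Sp(\rtil)$: the point of the partition of unity is to separate the rough part $\Ga$, where only the hypothesis $(a)$ is available, from the smooth sphere, where classical compactness applies, so that neither localised piece ever generates an uncontrolled surface trace.
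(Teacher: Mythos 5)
Your proof is correct, and at its core it relies on the same mechanism as the paper's: a smooth cut-off separating the rough boundary $\Ga$ from the artificial sphere $\Sp(\rtil)$, WLST for the piece near $\Ga$, compactness on the ball for the piece near the sphere, and cut-off/diagonal arguments with a tail estimate for the weighted statement $(d)$. The differences are organizational, plus one genuinely different ingredient. The paper proves the cycle $(a)\Rightarrow(b)\Rightarrow(d)\Rightarrow(a)$ and dispatches $(c)$ with ``similar arguments'', which is more economical than your six pairwise implications: your direct $(a)\Rightarrow(d)$ with the explicit tail bound $r_{k}^{t-s}M$ is precisely the content of the paper's Lemma \ref{lem:app-tech_weighted-est} combined with its $(b)\Rightarrow(d)$ step, and your localizations proving $(b)\Rightarrow(a)$, $(c)\Rightarrow(a)$ reappear almost verbatim inside the paper's $(b)\Rightarrow(d)$ (multiplication by $\eta_{k}$, successive extraction, diagonal sequence). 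The genuine divergence is in the sphere piece of $(a)\Rightarrow(b)$: the paper extends $(1-\chi)E_{n}$ by zero to $\U(\rtil)$, invokes Weber's regularity theorem \cite{weber_regularity_1981} to place it in $\Ho(\U(\rtil))$, and then uses Rellich's selection theorem, whereas you invoke Weck's selection theorem on the ball with full tangential boundary condition, via \cite{bauer_maxwell_2016}. Both are valid; your variant is more self-contained relative to the paper's toolkit, since \cite{bauer_maxwell_2016} is needed anyway for Theorem \ref{theo:bvp_WST}, while the paper's route yields the stronger $\Ho$-information on the annulus piece as a byproduct.

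One small point to repair: you invoke Remark \ref{rem:bvp_WST} to drop $\ga$ from all four statements, but that remark as stated covers only WST and WLST, i.e.\ $(a)$, $(b)$, $(c)$; the $\ga$-independence of the weighted embedding in $(d)$ is not literally covered by it. This costs you nothing, because every one of your arguments goes through with $\ga$ carried along: multiplication by the smooth scalar cut-offs is compatible with $\ga$ in the sense $\div(\ga\eta u)=\eta\div(\ga u)+\nabla\eta\cdot(\ga u)$, the tail estimate is $\ga$-free, and WLST resp.\ ball-WST are applied with $\ga$ itself (the latter after extending $\ga$ by the identity into the hole $\rthree\sm\Om$, which preserves admissibility on the bounded domain). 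So either keep $\ga$ throughout, or note that $\ga$-independence of $(d)$ follows a posteriori from the equivalence $(a)\Leftrightarrow(d)$ together with Remark \ref{rem:bvp_WST} applied to $(a)$.
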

\begin{proof}~\\[-6pt]
	\begin{enumerate}[leftmargin=30pt,itemsep=6pt,itemindent=-15pt]
		\item[]\underline{$(a)\,\Rightarrow\,(b)$:}
			   Let $\rtil>r_{0}$. By Remark \ref{rem:bvp_WST} it is 
			   sufficient to show the compactness of 
			   \begin{align*}
			   		\Rgen{}{}{\tilde{\Ga}_{1}}(\Omrtil)\cap\Dzn(\Omrtil)
                  \xhookrightarrow{\hspace{0.4cm}}\lt(\Omrtil)\,.
			   \end{align*} 
		       Therefore let $(E_{n})_{n\in\N}\subset\Rgen{}{}{\tilde{\Ga}_{1}}
		       (\Omrtil)\cap\Dzn(\Omrtil)$ be bounded, choose $r_{0}<\rhat<\rtil$ 
		       and a cut-off function $\chi\in\cicrthree$ with 
		       $\supp\chi\subset\U(\rtil)$ and $\restr{\chi}{\B(\rhat)}=1$. 
		       Then, for every $n\in\N$ we have 
			   \begin{align*}
			   		\hspace*{1cm}E_{n}=\check{E}_{n}+\hat{E}_{n}
			   		:=\chi E_{n}	+(1-\chi) E_{n}\,,
			   		\qquad\supp\check{E}_{n}\subset\Om(\rtil)\,,
			   		\qquad\supp\hat{E}_{n}\subset\G(\rhat,\rtil)\,,
			   \end{align*} splitting $(E_{n})_{n\in\N}$ 
			   into $(\check{E}_{n})_{n\in\N}$ and $(\hat{E}_{n})_{n\in\N}$. 
			   Extending $\check{E}_{n}$ resp.\;$\hat{E}_{n}$ by zero, we obtain 
			   (\,by abuse of notation\,) sequences
			   \begin{align*}
			   		(\check{E}_{n})_{n\in\N}\subset\Rztom\cap\Dznom
			   		\qquad\text{and}\qquad
			   		(\hat{E}_{n})_{n\in\N}\subset
			   		\Rgen{}{}{\Sp(\rtil)}(\U(\rtil))\cap\D(\U(\rtil))
			   \end{align*}
			   which are bounded in the respective spaces. Thus, using 
			   Weck's local selection theorem and Remark 
			   \ref{rem:bvp_WST}, we can choose a subsequence 
			   $(\check{E}_{\pi(n)})_{n\in\N}$ of $(\check{E}_{n})_{n\in\N}$ 
			   converging in $\ltlocomb$. The corresponding subsequence 
			   $(\hat{E}_{\pi(n)})_{n\in\N}$ is of course also bounded in 
			   $\Rgen{}{}{\Sp(\rtil)}(\U(\rtil))\cap\D(\U(\rtil))$ and 
			   by \cite[Theorem 2.2]{weber_regularity_1981}, even in 
			   $\Ho(\U(\rtil))$, hence (\,Rellich's selection theorem\,) has a 
			   subsequence $(\hat{E}_{\tilde{\pi}(n)})_{n\in\N}$ converging
			   in $\lt(\U(\rtil))$. Thus
			   \begin{align*}
			   	  &\norm{E_{\tilde{\pi}(n)}-E_{\tilde{\pi}(m)}}_{\lt(\Omrtil)}\\
			   	  &\qquad\leq c\cdot\Big(\,\norm{\chi\big(E_{\tilde{\pi}(n)} 
			   	   -E_{\tilde{\pi}(m)}\big)}_{\lt(\Omrtil)}
			   	   +\norm{(1-\chi)\big(E_{\tilde{\pi}(n)}
			   	   -E_{\tilde{\pi}(m)}\big)}_{\lt(\Omrtil)}\Big)\\
			   	  &\qquad\qquad\leq c\cdot\Big(\,\big\|\,
			   	   \check{E}_{\tilde{\pi}(n)}
			   	   -\check{E}_{\tilde{\pi}(m)}\,\big\|_{\lt(\Omrtil)}
			   	   +\big\|\,\hat{E}_{\tilde{\pi}(n)}
			   	   -\hat{E}_{\tilde{\pi}(m)}\,\big\|_{\lt(\U(\rtil))}\,\Big)
			   	   \xrightarrow{\;m,n\rightarrow\infty\;}0\,,
			   \end{align*}
 			   meaning that $(E_{\tilde{\pi}(n)})_{n\in\N}\subset(E_{n})_{n\in\N}$ is 
 			   a Cauchy sequence in 
 			   $\lt(\Omrtil)$.\\[-6pt]
		\item[]\underline{$(b)\,\Rightarrow\,(d)$:}
			   Let $s,t\in\reals$ with $s>t$ and 
			   $(E_{n})_{n\in\N}\subset\Rstom\cap\ga^{-1}\Dsnom$ be bounded. 
			   Then there exists a subsequence 
			   $(E_{\pi(n)})_{n\in\N}\subset(E_{n})_{n\in\N}$ converging 
			   weakly in $\Rstom\cap\ga^{-1}\Dsnom$ to some vector field 
			   $E\in\Rstom\cap\ga^{-1}\Dsnom$. We now construct a subsequence
			   $(E_{\tilde{\pi}(n)})_{n\in\N}$ of $(E_{\pi(n)})_{n\in\N}$ 
			   converging in $\ltlocomb$ to the same limit $E$. Therefore, 
			   observe that
               \begin{align*}
               		(E_{\pi(n),1})_{n\in\N}
               		\quad\text{with}\quad 
               		E_{\pi(n),1}:=\eta_{1}E_{\pi(n)}
               \end{align*}
               is bounded in 
               $\Rgen{}{}{\tilde{\Ga}_{1}}(\Om(r_{2}))
                \cap\ga^{-1}\Dzn(\Om(r_{2}))$,  
               $\tilde{\Ga}_{1}:=\Ga_{1}\hspace*{0.3mm}\cup\,\Sp(r_{2})$ 
               such that by assumption there exists a subsequence 
               $(E_{\pi_{1}(n),1})_{n\in\N}$ converging in $\lt(\Om(r_2))$. 
               Then $(E_{\pi_{1}(n)})_{n\in\N}\subset(E_{\pi(n)})_{n\in\N}$ is 
               converging in $\lt(\Om(r_{1}))$ and as $(E_{\pi_{1}(n)})_{n\in\N}$ 
               is also weakly convergent in $\lt(\Om(r_{1}))$, we have 
               \begin{align*}
               		E_{\pi_{1}(n)}\longrightarrow E
               		\quad\text{in}\quad\lt(\Om(r_1))\,.
               \end{align*}
               Multiplying $(E_{\pi_{1}(n)})_{n\in\N}$ with $\eta_{2}$ we obtain 
               a sequence $(E_{\pi_1(n),2})_{n\in\N}$, 
               $E_{\pi_{1}(n),2}:=\eta_{2}E_{\pi_{1}(n)}$ bounded 
               in $\Rgen{}{}{\tilde{\Ga}_{1}}(\Om(r_{3}))
               \hspace*{0.5mm}\cap\hspace*{0.5mm}
               \ga^{-1}\Dzn(\Om(r_{3}))$, 
               $\tilde{\Ga}_{1}:=\Ga_{1}\cup\,\Sp(r_{3})$ and as before we 
               construct a subsequence $(E_{\pi_{2}(n),2})_{n\in\N}$ converging 
               in $\lt(\Om(r_{3}))$, giving a subsequence 
               $(E_{\pi_{2}(n)})_{n\in\N}\subset(E_{\pi_{1}(n)})_{n\in\N}$ with
               \begin{align*}
               		E_{\pi_{2}(n)}\longrightarrow E
               		\quad\text{in}\quad\lt(\Om(r_{2}))\,.
               \end{align*} 
               Continuing like this, we successively construct subsequences 
               $(E_{\pi_{k}(n)})_{n\in\N}$ with $E_{\pi_{k}(n)}\longrightarrow E$ 
               in $\lt(\Om(r_{k}))$ and switching to the diagonal sequence we 
               indeed end up with a sequence $(E_{\tilde{\pi}(n)})_{n\in\N}$, 
               $\tilde{\pi}(n):=\pi_{n}(n)$, with 
               $E_{\tilde{\pi}(n)}\longrightarrow E$ in $\ltlocomb$. 
 			   Now Lemma \ref{lem:app-tech_weighted-est} implies for arbitrary 
 			   $\theta>0$  
			   \begin{align*}
			   		\normlttom{E_{\tilde{\pi}(n)}-E}
			   			\leq c\cdot\normltomda{E_{\tilde{\pi}(n)}-E}+\theta\,,
			   \end{align*}
			   with $c,\da\in(0,\infty)$ independent of $E_{\tilde{\pi}(n)}$. 
			   Hence
			   \begin{align*}
			   		\limsup_{n\rightarrow\infty}
			   		\normlttom{E_{\tilde{\pi}(n)}-E}\leq\theta\,,
			   \end{align*} 
			   and we obtain $E_{\tilde{\pi}(n)}\To E$ in $\lttom$.\\[-6pt]
		\item[]\underline{$(d)\,\Rightarrow\,(a)$:}
			   For $(E_{n})_{n\in\N}$ bounded in $\Rztom\cap\ga^{-1}\Dznom$,
			   assertion (c) implies the existence of a subsequence 
			   $(E_{\pi(n)})_{n\in\N}$ converging in $\ltmoom$ to some 
			   $E\in\ltmoom$. Then $E\in\ltlocomb$ and as
			   \begin{align*}
			   		\forall\;\rtil>0:\quad	
			   		\norm{E_{\pi(n)}-E}_{\lt(\Om(\rtil))}
			   		\leq (1+\rtil)^{1/2}\cdot\normltmoom{E_{\pi(n)}-E}\,,	
			   \end{align*}
 			   we obtain $(E_{\pi(n)})_{n\in\N}\To E$ in $\ltlocomb$.\\[-6pt]
	\end{enumerate}
	Similar arguments to those corresponding to $(b)$ show the assertion for 
	$(c)$.
\end{proof}
\noindent
As shown by Bauer, Pauly, and Schomburg \cite[Theorem 4.7]{bauer_maxwell_2016}, 
bounded weak Lipschitz domains satisfy Weck's selection theorem and by Lemma 
\ref{lem:bvp_WST} $(a)$ this directly implies the following.
\begin{theo}\label{theo:bvp_WST}
	Exterior weak Lipschitz domains satisfy\;Weck's local selection theorem.
\end{theo}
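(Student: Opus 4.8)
The plan is to obtain this as a direct consequence of Weck's selection theorem on bounded truncations, using the equivalence established in Lemma \ref{lem:bvp_WST}. Fix an exterior weak Lipschitz domain $\Om\subset\rthree$ and recall that $r_{0}>0$ was chosen so that $\rthree\sm\Om\Subset\U(r_{0})$; in particular the whole boundary $\Ga=\p\Om$ is contained in $\U(r_{0})$.

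First I would verify that for every $\rtil>r_{0}$ the truncated domain $\Omrtil=\Om\cap\U(\rtil)$ is a \emph{bounded} weak Lipschitz domain with weak Lipschitz interface, carrying the decomposition $\tilde{\Ga}_{1}:=\Ga_{1}\cup\Sp(\rtil)$ and $\Ga_{2}$. The decisive geometric observation is that, because $\Ga\subset\U(r_{0})$ and $\rtil>r_{0}$, the new boundary piece $\Sp(\rtil)$ produced by cutting with the ball is disjoint from and separated from $\Ga$. Hence $\p\Omrtil$ splits into the original Lipschitz manifold $\Ga$ and the smooth sphere $\Sp(\rtil)$, no new interface corners arise where the sphere would meet $\Ga$, and the interface structure of $\Om$ is inherited on the inner part while the sphere contributes a smooth (hence trivially Lipschitz) submanifold to the electric part. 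Thus $\tilde{\Ga}_{1}$ and $\Ga_{2}$ are Lipschitz submanifolds of $\p\Omrtil$, and $\Omrtil$ is a bounded weak Lipschitz domain in the required sense.

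Next I would invoke \cite[Theorem 4.7]{bauer_maxwell_2016}, which guarantees that every bounded weak Lipschitz domain satisfies Weck's selection theorem, together with Remark \ref{rem:bvp_WST}, by which WST is essentially independent of the admissible transformation $\ga$. Applied to $\Omrtil$ with the decomposition $\tilde{\Ga}_{1},\Ga_{2}$, this yields the compactness of
\begin{align*}
    \Rgen{}{}{\tilde{\Ga}_{1}}(\Omrtil)\cap\ga^{-1}\Dzn(\Omrtil)
    \xhookrightarrow{\hspace{0.4cm}}\lt(\Omrtil),
\end{align*}
which is precisely statement $(b)$ of Lemma \ref{lem:bvp_WST}, and it holds for all $\rtil>r_{0}$.

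Finally, the equivalence $(a)\Leftrightarrow(b)$ of Lemma \ref{lem:bvp_WST} turns the validity of $(b)$ for all $\rtil>r_{0}$ into the assertion that $\Om$ satisfies WLST, completing the argument. The genuinely substantial input is the cited bounded-domain result, which does the analytic heavy lifting; on our side the only thing to check is the geometric claim that $\Omrtil$ is a bounded weak Lipschitz domain with the stated interface, and I expect this to be the main (though mild) obstacle. It is precisely the separation $\Ga\subset\U(r_{0})$ with $\rtil>r_{0}$ that makes it routine, since it decouples the spherical cut from the prescribed boundary and prevents the creation of any new singular intersections.
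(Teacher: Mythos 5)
Your argument is correct and coincides with the paper's own proof: the paper likewise notes that the truncations $\Omrtil$ are bounded weak Lipschitz domains, invokes \cite[Theorem 4.7]{bauer_maxwell_2016} (together with Remark \ref{rem:bvp_WST}) to obtain WST on them, i.e.\ statement $(b)$ of Lemma \ref{lem:bvp_WST}, and concludes WLST via the equivalence $(a)\Leftrightarrow(b)$. Your explicit verification that $\Sp(\rtil)$ stays away from $\Ga$ (since $\Ga\subset\U(r_{0})$ and $\rtil>r_{0}$), so that no new interface singularities arise, is exactly the geometric point the paper leaves implicit.
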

\noindent
Returning to our initial question,
a first step to a solution theory for 
\eqref{equ:bvp_weak-max-sys} is the following observation.

\begin{theo}
	The \emph{Maxwell operator} 
	\begin{align*}
		\maps{\calM}{\Rztom\times\Rznom\subset\ltLaom}{\ltLaom}
		     {\solu}{\M\,\solu}\,,
	\end{align*}
	is self-adjoint and reduced by the closure of its range 
	\begin{align*}
		\ovl{\rg(\calM)}
		  =\eps^{-1}\ovl{\rot\Rznom}\times\mu^{-1}\ovl{\rot\Rztom}\,.
	\end{align*}
\end{theo}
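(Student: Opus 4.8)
The plan is to realize $\calM=\La^{-1}(i\Rot)$ as a bounded, self-adjoint, uniformly positive transform (multiplication by $\La^{-1}$) of an off-diagonal block operator built from $\rot$ and its adjoint, and then to read off $\ovl{\rg(\calM)}$ directly from this block structure. First I would work on the \emph{unweighted} space and introduce the rotation operator with its boundary condition,
$$\maps{\A}{\Rztom\subset\ltom}{\ltom}{E}{\rot E}.$$
Since $\Rztom$ is by definition complete under the graph norm $\normRom{\cdot}$ and contains $\cictom$, the operator $\A$ is densely defined and closed, so that $\A^{\ast\ast}=\A$.

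The decisive structural input is the identification of the adjoint. By Lemma \ref{lem:prel_relationships} (used with $s=t=0$, replacing the test fields by $\Rztom$, which is admissible since $s+t\geq0$) together with \cite[Section 3]{bauer_maxwell_2016}, the space $\Rznom$ is exactly the set of $H\in\R(\Om)$ with $\scpltom{\rot E}{H}=\scpltom{E}{\rot H}$ for all $E\in\Rztom$; this says precisely that
$$\A^{\ast}=\rot\qquad\text{with}\qquad\dod(\A^{\ast})=\Rznom.$$
Now I would invoke the standard result on zero-diagonal block operators: for densely defined closed $\A$, the operator
$$C:=i\Rot=\ptwomat{0}{-i\A^{\ast}}{i\A}{0}\quad\text{on }\ltom\times\ltom,\qquad\dod(C)=\dod(\A)\times\dod(\A^{\ast})=\Rztom\times\Rznom,$$
is self-adjoint. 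Formally $C^{\ast}=\ptwomat{0}{(i\A)^{\ast}}{(-i\A^{\ast})^{\ast}}{0}=\ptwomat{0}{-i\A^{\ast}}{i\A^{\ast\ast}}{0}=C$, the only nontrivial point being the equality of domains, which is the content of the block adjoint formula.

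To pass to $\ltLaom$, I would use that $\La$ is bounded, symmetric and uniformly positive, so $\scp{\cdot}{\cdot}_{\La}=\scpltom{\La\,\cdot}{\cdot}$ is an equivalent Hilbert structure and $\La^{-1}$ is an isomorphism. Then $\calM=\La^{-1}C$ is symmetric on $\ltLaom$ with the same domain, since $\scp{\calM u}{v}_{\La}=\scpltom{Cu}{v}=\scpltom{u}{Cv}=\scp{u}{\calM v}_{\La}$; and if $v\in\dod(\calM^{\ast})$ with $\calM^{\ast}v=w$, then $\scpltom{Cu}{v}=\scp{\calM u}{v}_{\La}=\scp{u}{w}_{\La}=\scpltom{u}{\La w}$ for all $u\in\dod(C)$, forcing $v\in\dod(C^{\ast})=\dod(C)$ and $Cv=\La w$. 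Hence $\dod(\calM^{\ast})\subseteq\dod(\calM)$ and $\calM$ is self-adjoint. The reduction by $\ovl{\rg(\calM)}$ is then automatic: self-adjointness gives $\ker(\calM)=\ker(\calM^{\ast})=\rg(\calM)^{\perp_{\La}}$, so $\ltLaom=\ker(\calM)\oplus_{\La}\ovl{\rg(\calM)}$ is a reducing orthogonal decomposition.

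It remains to compute $\ovl{\rg(\calM)}$. Because $E\in\Rztom$ and $H\in\Rznom$ vary independently,
$$\rg(\calM)=\setb{(-i\eps^{-1}\rot H,\,i\mu^{-1}\rot E)}{E\in\Rztom,\,H\in\Rznom}=\eps^{-1}\rot\Rznom\times\mu^{-1}\rot\Rztom,$$
the scalars $\pm i$ being irrelevant for the generated linear spaces. Since the $\ltom$-, $\lgen{}{2}{\eps}(\Om)$- and $\lgen{}{2}{\mu}(\Om)$-norms are pairwise equivalent, the closure in $\ltLaom$ of a product is the product of the closures, and $\eps^{-1}$, $\mu^{-1}$ are $\ltom$-homeomorphisms (bounded with bounded inverses $\eps$, $\mu$), hence commute with taking closures. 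This yields
$$\ovl{\rg(\calM)}=\ovl{\eps^{-1}\rot\Rznom}\times\ovl{\mu^{-1}\rot\Rztom}=\eps^{-1}\ovl{\rot\Rznom}\times\mu^{-1}\ovl{\rot\Rztom},$$
as claimed. The main obstacle is the adjoint identification $\A^{\ast}=\rot$ with the \emph{exact} domain $\Rznom$, i.e.\ that the weak tangential conditions encoded in $\Rztom$ and $\Rznom$ are genuinely dual on a weak Lipschitz interface; once this duality is available, the block-operator self-adjointness, the transfer through $\La^{-1}$, and the range bookkeeping are soft functional analysis.
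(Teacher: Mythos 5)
Your proof is correct and follows essentially the same route as the paper: the paper's own proof is a one-line appeal to Lemma \ref{lem:prel_relationships} (equivalence of weak and strong boundary conditions), which is precisely the adjoint identification $\A^{\ast}=\rot$ with $\dod(\A^{\ast})=\Rznom$ at the heart of your argument. The block-operator self-adjointness, the transfer through $\La^{-1}$, and the closure-of-range bookkeeping that you spell out are exactly the ``straightforward'' steps the paper leaves to the reader.
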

\noindent
We note that here, in the case of an exterior domain $\Om$, the respective 
ranges are not closed.
\begin{proof}
	The proof is straightforward using Lemma \ref{lem:prel_relationships}, i.e., 
	the equivalence of the definition of weak and strong boundary conditions.
\end{proof}
\noindent
Thus $\sigma(\calM)\subset\reals$, meaning that every $\om\in\C\sm\reals$ is 
contained in the resolvent set of $\calM$ and hence for given $\rhs\in\ltLaom$ we 
obtain a unique solution of \eqref{equ:bvp_weak-max-sys} by 
$\solu:=\big(\,\calM-\om\,\big)^{-1}\hspace*{0.02cm}\rhs\in\Rztom\times\Rznom$.
Moreover, using the resolvent estimate 
$\norm{(\,\calM-\om\,)^{-1}}\leq|\,\Im\om\,|^{-1}$ and the differential 
equation, we get
\begin{align*}
	\normRom{u}
		\leq c\cdot\Big(\,\normltLaom{\solu}
		      +\normltLaom{\rhs}+|\,\om\,|\normltLaom{\solu}\,\Big)
		\leq c\cdot\frac{\,1+|\,\om\,|\,}{|\,\Im \om\,|}	
		     \cdot\normltLaom{\rhs}\,.
\end{align*}

\begin{theo}
	For $\om\in\C\sm\reals$ the solution operator
	\begin{align*}
		\map{\sol:=\big(\,\calM-\om\,\big)^{-1}}{\ltLaom}
	     	{\Rztom\times\Rznom}	
	\end{align*}
	is continuous with $\dsp\norm{\sol}_{\ltLaom,\Rom}
	\leq c\cdot\frac{\,1+|\,\om\,|\,}{|\,\Im\om\,|}$, 
	where $c$ is independent of $\om$ and $\rhs$.
\end{theo}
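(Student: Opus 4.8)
The plan is to exploit the self-adjointness of $\calM$ established in the preceding theorem together with the elementary spectral bound for resolvents of self-adjoint operators, and then to upgrade the resulting $\ltLaom$-estimate to the full graph norm $\normRom{\cdot}$ by reading the rotations directly off the differential equation.

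First I would record that, since $\calM$ is self-adjoint with respect to $\scp{\,\cdot\,}{\,\cdot\,}_{\La}$, its spectrum is real; hence every $\om\in\C\sm\reals$ lies in the resolvent set and $\sol=(\calM-\om)^{-1}$ is a well-defined bounded operator on $\ltLaom$ with range contained in $\dod(\calM)=\Rztom\times\Rznom$. This already yields existence and uniqueness of the solution $\solu=\sol\,\rhs$ of \eqref{equ:bvp_weak-max-sys}. Testing the equation with $\solu$ in $\ltLaom$ and taking imaginary parts gives $\normltLaom{(\calM-\om)\solu}\geq|\,\Im\om\,|\cdot\normltLaom{\solu}$, equivalently the resolvent bound
\begin{align*}
	\normltLaom{\solu}\leq\frac{1}{|\,\Im\om\,|}\cdot\normltLaom{\rhs}\,.
\end{align*}

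Next, to control the rotations, I would rewrite $(\M-\om)\solu=\rhs$ as $\M\solu=\rhs+\om\solu$ and use $\M=i\La^{-1}\Rot$ to solve algebraically for $\Rot\solu=-i\La(\rhs+\om\solu)$. Because $\eps$ and $\mu$ are $\ka$-admissible, $\La$ is an $\li$-matrix field, so the $\lt$-norms of $\rot E$ and $\rot H$ are bounded by $c\cdot(\normltLaom{\rhs}+|\,\om\,|\cdot\normltLaom{\solu})$ with $c$ depending only on the admissibility bounds and not on $\om$. Combining this with the resolvent bound and the equivalence of $\normltLaom{\cdot}$ with the standard $\lt$-norm gives
\begin{align*}
	\normRom{\solu}
	\leq c\cdot\big(\,\normltLaom{\solu}+\normltLaom{\rhs}
	+|\,\om\,|\cdot\normltLaom{\solu}\,\big)
	\leq c\cdot\frac{\,1+|\,\om\,|\,}{|\,\Im\om\,|}\cdot\normltLaom{\rhs}\,,
\end{align*}
which is exactly the asserted estimate, and reading off the operator norm $\norm{\sol}_{\ltLaom,\Rom}$ completes the argument.

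There is no genuine analytic obstacle here; the only point requiring care is the passage from the abstract $\ltLaom\to\ltLaom$ resolvent bound to the graph-norm bound on $\Rztom\times\Rznom$. This is precisely where the structure $\M=i\La^{-1}\Rot$ and the boundedness of $\La$ enter: the rotation part is obtained \emph{algebraically} through $\rhs$ and $\solu$ rather than through any regularity or compactness argument, so the constant $c$ remains independent of $\om$ and the frequency dependence appears only in the explicit factor $(1+|\,\om\,|)/|\,\Im\om\,|$.
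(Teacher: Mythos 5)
Your proposal is correct and follows essentially the same route as the paper: self-adjointness of $\calM$ gives $\sigma(\calM)\subset\reals$ and the resolvent estimate $\norm{(\,\calM-\om\,)^{-1}}\leq|\,\Im\om\,|^{-1}$, and the graph-norm bound is then obtained exactly as you do, by solving the differential equation algebraically for $\Rot\solu=-i\La(\rhs+\om\solu)$ and using the boundedness of $\La$. Your derivation of the resolvent bound via taking imaginary parts is a standard detail the paper leaves implicit, but the argument is otherwise identical.
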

\begin{rem}\label{rem:bvp_toolbox-komplex}
	Let $\om\in\C\sm\reals$. By Lemma \ref{lem:prel_toolbox} the following 
	statements are equivalent to the boundedness of $\sol$:
	\begin{enumerate}[label=$\circ$, leftmargin=0.75cm]
		\item (Friedrichs/Poincar{\'e} type estimate) There exists $c>0$ such that
			  \begin{align*}
			  	\normRom{\solu}
			  	\leq c\cdot\normltLaom{\big(\,\M-\om\,\big)\hspace*{0.02cm}\solu}
			  	\quad\;\;\,
			  	\forall\;\solu\in\Rztom\times\Rznom\,.
			  \end{align*}
		\item (Closed range) The range
			  \begin{align*}
			  	\rg(\,\calM-\om\,)
			  	 =\big(\,\calM-\om\,\big)\,\big(\,\Rztom\times\Rznom\,\big)
			  \end{align*}
			  is closed in $\ltLaom$.\\
	\end{enumerate}
\end{rem}
\noindent
The case $\om\in\reals\sm(0)$ is much more challenging, since we want to 
solve in the continuous spectrum of the Maxwell operator. Clearly this cannot be 
done for every $\rhs\in\ltLaom$, since otherwise we would have 
$\rg(\,\calM-\om\,)=\ltLaom$ and therefore $(\,\calM-\om\,)^{-1}$ would 
be continuous (\,cf. Lemma \ref{lem:prel_toolbox}\,) or in other words 
$\om\not\in\sigma(\calM)$. Thus we have to restrict ourselves to certain 
subspaces of $\ltLa$ or generalize our solution concept. Actually, we 
will do both and show existence as well as uniqueness of weaker, so called 
\emph{``radiating solutions''}, by switching to data 
$\rhs\in\ltsom$ for some $s>1/2$.
\begin{defi}\label{def:bvp_rad-sol}
	Let $\om\in\reals\sm(0)$ and $f\in\ltlocom$. We call 
	$\solu$ (radiating) solution of 
	\eqref{equ:bvp_weak-max-sys}, if
	\begin{align*}
		\solu\in\Rsmtom{-\frac{1}{2}}\times\Rsmnom{-\frac{1}{2}}	
	\end{align*}
	and
	\begin{align}
		\big(\,\M&-\om\,\big)\hspace*{0.03cm}\solu=\rhs\,,
		\label{equ:bvp_rad-sol-cond1}\\[3pt]
		\big(\,\Laz+\sqrt{\eps_{0}\mu_{0}}\;\Xi\;\big)\hspace*{0.03cm}&\solu
		\in\ltbigom{-\frac{1}{2}}\,.
		\label{equ:bvp_rad-sol-cond2}
	\end{align}
\end{defi}
\begin{rem}
	Since
	\begin{align*}
		\big(\,\Laz+\sqrt{\eps_{0}\mu_{0}}\;\Xi\;\big)\hspace*{0.03cm}\solu
		 =\Laz\,\bigg(\,E-\sqrt{\frac{\mu_0}{\eps_{0}}}\,\xi\times H\,,\,
		  H+\sqrt{\frac{\eps_{0}}{\mu_{0}}}\,\xi\times E\,\bigg)\,,	
	\end{align*}
	the last condition is just the classical Silver-M\"uller radiation condition 
	which describes the behavior of the electro-magnetic field at infinity and is 
	needed to distinguish outgoing from incoming waves 
	(interchanging signs would yield incoming waves).
\end{rem}
\noindent
In order to construct such a radiating solution $\solu$ we use 
the \emph{``limiting absorption principle''} introduced by Eidus and 
approximate $\solu$ by solutions $(\solu_{n})_{n\in\N}$ associated with 
frequencies $(\om_{n})_{n\in\N}\subset\C\sm\reals$ converging to 
$\om\in\reals\sm(0)$. This leads to statement $(\mathrm{4})$ of our main 
result Theorem \ref{theo:bvp_fred-alt}, where the following abbreviations are used: 
\begin{alignat*}{2}
	\gk{\,\calM-\om\,}
		&:=\setb{\solu}{\solu\text{ is a radiating solution of }
		 (\,\M-\om\,)\hspace*{0.03cm}\solu=0\,}
		&\quad
		&\textit{(gen. kernel of $\calM-\om$)}\,,\\
	\gs &:=\setb{\,\om\in\C\sm(0)\hspace*{0.03cm}}{\,\gk{\calM-\om}\neq(0)\,} 
		&\quad\qquad
		&\textit{(gen. point spectrum of $\calM$)}\,.\\[-4pt]
\end{alignat*}
\begin{theo}[\,Fredholm alternative\,]\label{theo:bvp_fred-alt}
	Let $\Om\subset\rthree$ be an exterior weak Lipschitz domain with boundary 
	$\Ga$ and weak Lipschitz boundary parts $\Ga_{1}$ and 
	$\Ga_{2}=\Ga\sm\ovl{\Ga}_{1}$. Furthermore let $\om\in\reals\sm(0)$ and 
	$\eps$,\,$\mu$ be $\ka$-admissible with $\ka>1$. Then:
	\begin{enumerate}[label=$(\arabic*)$]
		\item $\dsp\gk{\,\calM-\om\,}\subset
			   \bigcap_{t\in\reals}\Big(\,\Rttom\cap\eps^{-1}\zdtnom\,\Big)
			   \,\times\,
			   \Big(\,\Rtnom\cap\mu^{-1}\zdttom\,\Big)$\footnote{We even have
			   \begin{align*}
			   		\dsp\gk{\,\calM-\om\,}
			   		\subset\bigcap_{t\in\reals}
			   		\Big(\,\Rttom\cap\eps^{-1}\rot\Rtnom\,\Big)
			   		\,\times\,
			   		\Big(\,\Rtnom\cap\mu^{-1}\rot\Rttom\,\Big)\,.
			   \end{align*}}\,.\\[2pt]
		\item $\dim\gk{\,\calM-\om\,}<\infty$\,.\\[3pt]
		\item $\gs\subset\reals\sm(0)$ and $\gs$ has no accumulation point in 
			  $\reals\sm(0)$\,.\\[3pt]
		\item For all $\rhs\in\dsp\ltbigom{\frac{1}{2}}$ 
			  there exists a radiating solution $\solu$ of 
			  \eqref{equ:bvp_weak-max-sys}, if and only if
			  \begin{align}\label{equ:bvp_fred-alt-cond1}
			  		\forall\;v\in\gk{\,\calM-\om\,}:
			  		\qquad\scpltLaom{\rhs}{v}=0\,.	
			  \end{align}
			  Moreover, we can choose $\solu$ such that
			  \begin{align}\label{equ:bvp_fred-alt-cond2}
			  		\forall\;v\in\gk{\,\calM-\om\,}:
			  		\qquad\scpltLaom{\solu}{v}=0\,.
			  \end{align}
			  Then $\solu$ is uniquely determined.\\[2pt]
		\item For all $s$,${-t}>1/2$ the solution operator
			  \begin{align*}
			  	\hspace*{0.75cm}
			  	\map{\sol}{\ltsom\cap\,\gk{\,\calM-\om\,}^{\perp_{\La}}}
			  	{\big(\,\Rttom\times\Rtnom\,\big)\cap\,
			  	 \gk{\,\calM-\om\,}^{\perp_{\La}}}
			  \end{align*}
			  defined by $(\mathrm{4})$ is continuous.\\[-2pt]
	\end{enumerate}
\end{theo}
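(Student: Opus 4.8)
The plan is to recast continuity of $\sol$ as an a priori estimate and to prove that estimate by a compactness--contradiction argument. By statement $(4)$ the operator $\sol$ is already well defined and linear, sending each $\rhs\in\ltsom\cap\gk{\calM-\om}^{\perp_{\La}}$ to the unique radiating solution $\solu=\sol\rhs$ of $(\M-\om)\hspace*{0.03cm}\solu=\rhs$ with $\solu\perp_{\La}\gk{\calM-\om}$; the orthogonality constraint on $\rhs$ is precisely the solvability condition \eqref{equ:bvp_fred-alt-cond1}. Continuity of $\sol$ is therefore equivalent to the existence of a constant $c>0$, depending only on $\om$, $s$, $t$, such that $\normRtom{\sol\rhs}\leq c\cdot\normltsom{\rhs}$ for all admissible $\rhs$. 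First I would record the local a priori estimate for radiating solutions that the limiting absorption machinery underlying $(4)$ provides: there are $c>0$ and $r>r_{0}$ with
\begin{align*}
	\normRtom{\solu}\leq c\cdot\Big(\,\normltsom{\rhs}+\norm{\solu}_{\lt(\Om(r))}\,\Big)
\end{align*}
for every radiating solution $\solu$ of $(\M-\om)\hspace*{0.03cm}\solu=\rhs$; the decisive feature is that the global weighted norm is dominated by the data together with a purely local $\lt$-term.

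Next I would argue by contradiction. If the desired estimate failed, there would be sequences $(\rhs_{n})_{n\in\N}\subset\ltsom\cap\gk{\calM-\om}^{\perp_{\La}}$ and radiating solutions $\solu_{n}:=\sol\rhs_{n}$ with
\begin{align*}
	\normRtom{\solu_{n}}=1\,,\qquad\normltsom{\rhs_{n}}\To 0\,,\qquad\solu_{n}\perp_{\La}\gk{\calM-\om}\,.
\end{align*}
Writing $\solu_{n}=(E_{n},H_{n})$, the differential equation yields $\Rot\,\solu_{n}=-i\La(\rhs_{n}+\om\solu_{n})$, so $\rot E_{n}$ and $\rot H_{n}$ stay bounded in $\lttom$. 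Using the Helmholtz-type decomposition of the fields together with the scalar estimates for the irrotational parts (the field decomposition, Lemma~4.1), I would show $(\solu_{n})$ to be bounded in a weighted Maxwell space to which Lemma~\ref{lem:bvp_WST}\,$(d)$ applies. Since $\Om$ satisfies Weck's local selection theorem by Theorem~\ref{theo:bvp_WST}, Lemma~\ref{lem:bvp_WST}\,$(d)$ (together with Remark~\ref{rem:bvp_WST}) then supplies a subsequence, not relabelled, converging in $\lttom$, and in particular in $\lt(\Om(r))$, to some limit $\solu$.

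I would then pass to the limit in $(\M-\om)\hspace*{0.03cm}\solu_{n}=\rhs_{n}$ and in the Silver--M\"uller radiation condition $\big(\Laz+\sqrt{\eps_{0}\mu_{0}}\,\Xi\big)\solu_{n}\in\ltbigom{-\frac{1}{2}}$ to see that $\solu$ is a radiating solution of the homogeneous equation, hence $\solu\in\gk{\calM-\om}$. Because each $\solu_{n}$ is $\La$-orthogonal to the finite-dimensional space $\gk{\calM-\om}$ (\,by $(2)$\,), and this orthogonality is preserved under the $\lttom$-convergence since the kernel elements decay faster than any polynomial by $(1)$, the limit satisfies $\solu\perp_{\La}\gk{\calM-\om}$ as well; being itself in $\gk{\calM-\om}$, it must vanish, $\solu=0$. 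Feeding $\solu=0$ back into the local a priori estimate gives $\normRtom{\solu_{n}}\leq c\cdot(\normltsom{\rhs_{n}}+\norm{\solu_{n}}_{\lt(\Om(r))})\To 0$ along the subsequence, contradicting $\normRtom{\solu_{n}}=1$ and thereby establishing the estimate.

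The hard part will be the two ingredients feeding the compactness step: the local a priori estimate itself, and the verification that $(\solu_{n})$ is bounded in the weighted Maxwell space demanded by Lemma~\ref{lem:bvp_WST}\,$(d)$. Both hinge on decomposing $E$ and $H$ into divergence-free parts (\,which carry the Maxwell compactness\,) and irrotational parts (\,controlled by the scalar Helmholtz theory and the polynomial-decay and Eidus estimates near infinity\,), and it is precisely here that one must also confirm that the radiation condition is preserved in the limit. Once these are secured, the subsequence extraction and the orthogonality passage are routine.
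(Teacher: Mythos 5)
Your proposal addresses only assertion $(5)$ of the theorem: it explicitly takes $(1)$--$(4)$ as given (well-definedness and uniqueness of $\sol$ from $(4)$, finite dimensionality of $\gk{\,\calM-\om\,}$ from $(2)$, polynomial decay of kernel elements from $(1)$). But the statement to be proven comprises all five assertions, and $(4)$ --- existence of a radiating solution under the orthogonality condition \eqref{equ:bvp_fred-alt-cond1} --- is the heart of the theorem. The paper proves it by Eidus' limiting absorption principle: solving for nonreal frequencies $\om_{n}\to\om$, splitting off the gradient parts via the Helmholtz decompositions of Lemma \ref{lem:main-result_helm-decomp}, extracting convergent subsequences by Weck's local selection theorem (Theorem \ref{theo:bvp_WST}, Lemma \ref{lem:bvp_WST}), and controlling the limit by the a-priori estimate of Lemma \ref{lem:pol-est_a-priori}. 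Assertions $(1)$--$(3)$ also require proofs (Corollary \ref{cor:pol-est_polynomial-decay-es} together with the differential equation for $(1)$; orthonormal-sequence contradictions combining Lemma \ref{lem:pol-est_polynomial-decay} with WLST for $(2)$ and $(3)$). None of this appears in your proposal, so as a proof of the stated theorem it has a decisive gap.

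Within $(5)$ itself your route is different from the paper's and is in principle workable, but note two things. First, the paper argues via the closed graph theorem: $\dod(\sol)$ is closed in $\ltsom$ by polynomial decay of eigensolutions, and closedness of $\sol$ follows by passing to the limit in the equation and in estimate \eqref{equ:main-res_a-priori-final}; your normalization--compactness--contradiction scheme is instead essentially the argument the paper uses to establish the auxiliary bound \eqref{equ:proof_proof-cond} \emph{inside} the proof of $(4)$, transplanted to real frequencies. Second, and more importantly, the ``local a-priori estimate for radiating solutions'' on which your contradiction hinges is not part of the \emph{statement} of $(4)$: it is \eqref{equ:main-res_a-priori-final}, obtained in the proof of $(4)$ by passing Lemma \ref{lem:pol-est_a-priori} (which holds only for nonreal frequencies) to the limit. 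If you may cite that estimate, your argument closes (including preservation of the Silver--M\"uller condition in the limit, which needs the weighted $\Xi$-term of that estimate --- a point you flag but leave open); if you may use only the statement of $(4)$, you must re-derive the estimate, which amounts to redoing the limiting absorption analysis you skipped.
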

\begin{rem}\label{rem:bvp_toolbox-real}
	Under the conditions of Theorem \ref{theo:bvp_fred-alt} the following 
	statements are equivalent to the boundedness of $\sol$ 
	\big(\,cf. Lemma \ref{lem:prel_toolbox} and Remark \ref{rem:bvp_toolbox-komplex}\,\big):\\[-10pt]
	\begin{enumerate}[label=$\circ$, leftmargin=0.75cm]
		\item (Friedrichs/Poincar{\'e} type estimate) For all $s$,$-t>1/2$ there 
			  exists $c>0$ such that
			  \begin{align*}
			  	\normRtom{\solu}
			  	\leq c\cdot\normltsom{\big(\,\M-\om\,\big)\hspace*{0.02cm}\solu}
			  \end{align*}
			  holds for all $\solu\in\big(\,\Rttom\times\Rtnom\,\big)\cap\,\gk{\,\calM-\om\,}^{\perp_{\La}}$
			  satisfying the radiation condition.
		\item (Closed range) For all $s$,$-t>1/2$ the range
			  \begin{align*}
			  	\rg(\,\calM-\om\,)=
			  	\big(\,\calM-\om\,\big)\,\big(\,\Rttom\times\Rtnom\,\big)
			  \end{align*}
			  is closed in $\ltsom$.\\
	\end{enumerate}
\end{rem}
\noindent
By the same indirect arguments as in \cite[Corollary 3.9]{pauly_low_2006} 
\big(\,see also \cite[Section 4.9]{pauly_niederfrequenzasymptotik_2003}\,\big), 
we get even stronger estimates for the solution operator $\sol$.\\[-6pt]
\begin{cor}
	Let $\Om\subset\rthree$ be an exterior weak Lipschitz domain with boundary 
	$\Ga$ and weak Lipschitz boundary parts $\Ga_{1}$ and 
	$\Ga_{2}:=\Ga\sm\ovl{\Ga}_{1}$. Furthermore let $s$,\,$-t>1/2$, 
	$\eps$,\,$\mu$ be $\ka$-admissible with $\ka>1$ and 
	$\dsp K\Subset\C_{+}\sm(0)$ with $\ovl{K}\cap\gs=\emptyset$. Then:\\[-9pt]
	\begin{enumerate}[label=$(\arabic*)$, leftmargin=0.85cm]
		\item There exist constants $c>0$ and $\that>-1/2$ such that for 
			  all $\om\in\ovl{K}$ and $\rhs\in\ltsom$\\[-8pt]
			  \begin{align*}
			  	\normRtom{\sol\rhs}
			   	 +\normltthatom{\big(\,\Laz
			   	 +\sqrt{\eps_{0}\mu_{0}}\;\Xi\;\big)\hspace*{0.03cm}\sol\rhs}
			   	 \leq c\cdot\normltsom{\rhs}	
			  \end{align*}
			  holds, implying that $\map{\sol}{\ltsom}{\Rttom\times\Rtnom}$
			  is equicontinuous w.r.t.\;$\om\in\ovl{K}$.\\[-5pt]
		\item The mapping
			  \begin{align*}
			  	 \Map{\calL}{\ovl{K}}
			  	     {\mathrm{B}\big(\,\ltsom\,,\Rttom\times\Rtnom\,\big)}
			  	     {\om}{\sol}
			  \end{align*}
			  is uniformly continuous. 
	\end{enumerate}
\end{cor}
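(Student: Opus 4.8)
The plan is to obtain both assertions by the \emph{indirect} (contradiction) arguments indicated above, drawing on the machinery already assembled for Theorem~\ref{theo:bvp_fred-alt}: the compactness furnished by Weck's local selection theorem (Theorem~\ref{theo:bvp_WST}, Lemma~\ref{lem:bvp_WST}), the polynomial decay of eigensolutions from Theorem~\ref{theo:bvp_fred-alt}\,$(1)$, the weighted estimate of Lemma~\ref{lem:app-tech_weighted-est}, and the Poincar\'e-type characterisation of Remark~\ref{rem:bvp_toolbox-real}. The one genuinely new ingredient is the \emph{compactness of} $\ovl{K}$ together with $\ovl{K}\cap\gs=\emptyset$, which forces $\gk{\calM-\om}=(0)$ for every $\om\in\ovl{K}$; in particular $\sol$ is defined on all of $\ltsom$ and the orthogonality constraints in Theorem~\ref{theo:bvp_fred-alt}\,$(4)$--$(5)$ become vacuous.

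For $(1)$ I would argue by contradiction. If the asserted bound failed uniformly in $\om\in\ovl{K}$, there would be sequences $(\om_{n})\subset\ovl{K}$ and $(\rhs_{n})\subset\ltsom$ whose radiating solutions $\solu_{n}:=\calL_{\om_{n}}\rhs_{n}$ satisfy
\begin{align*}
	\normRtom{\solu_{n}}
	+\normltthatom{\big(\,\Laz+\sqrt{\eps_{0}\mu_{0}}\;\Xi\,\big)\solu_{n}}=1\,,
	\qquad\text{while}\qquad
	\normltsom{\rhs_{n}}\To 0\,.
\end{align*}
Since $\ovl{K}$ is compact I may pass to $\om_{n}\To\om_{0}\in\ovl{K}$. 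The normalisation bounds $(\solu_{n})$ in $\Rttom\times\Rtnom$ and, through $(\M-\om_{n})\solu_{n}=\rhs_{n}$ and the $\ka$-admissibility of $\eps,\mu$, also in the associated divergence spaces, so Theorem~\ref{theo:bvp_WST} and Lemma~\ref{lem:bvp_WST} extract a subsequence converging in $\ltlocomb$; the bound on the radiation term in $\ltthatom$ with $\that>-1/2$ lets the radiation condition survive in the limit. The limit $\solu$ is thus a radiating solution of $(\M-\om_{0})\solu=0$, hence $\solu\in\gk{\calM-\om_{0}}=(0)$, i.e.\ $\solu=0$. Re-inserting this into the differential equation and into the uniform decomposition/decay estimate underlying Theorem~\ref{theo:bvp_fred-alt}\,$(5)$ --- the step using the splitting of the fields $E$ and $H$, the decay of Theorem~\ref{theo:bvp_fred-alt}\,$(1)$, and Lemma~\ref{lem:app-tech_weighted-est} --- upgrades the local convergence to convergence in $\Rtom$ and in the radiation norm, so the normalised quantity tends to $0$, contradicting its value $1$. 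The resulting $\om$-independent bound is exactly the equicontinuity claimed.

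For $(2)$ the starting point is the difference identity: for $\om,\om'\in\ovl{K}$ and $\rhs\in\ltsom$ the field $\ssolu:=\calL_{\om}\rhs-\calL_{\om'}\rhs$ is again radiating and solves
\begin{align*}
	(\,\M-\om\,)\,\ssolu=(\,\om-\om'\,)\,\calL_{\om'}\rhs\,.
\end{align*}
The obstacle --- and the reason a naive resolvent identity does not close the argument --- is the \emph{weight mismatch}: the right-hand side lies only in $\lttom$ with $t<-1/2$, never in $\ltsom$ with $s>1/2$, so one cannot write $\ssolu=(\om-\om')\calL_{\om}(\calL_{\om'}\rhs)$. I therefore re-run the indirect scheme. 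A failure of uniform continuity yields $\om_{n},\tilde\om_{n}\To\om_{0}\in\ovl{K}$ with $|\,\om_{n}-\tilde\om_{n}\,|\To 0$ and data $\rhs_{n}$, $\normltsom{\rhs_{n}}=1$, such that $\normRtom{\ssolu_{n}}\geq\delta>0$ for $\ssolu_{n}:=\calL_{\om_{n}}\rhs_{n}-\calL_{\tilde\om_{n}}\rhs_{n}$. Normalising $v_{n}:=\ssolu_{n}/\normRtom{\ssolu_{n}}$ gives $\normRtom{v_{n}}=1$ and $(\M-\om_{n})v_{n}=(\om_{n}-\tilde\om_{n})\calL_{\tilde\om_{n}}\rhs_{n}/\normRtom{\ssolu_{n}}$, whose right-hand side tends to $0$ in $\lttom$ by the uniform bound $(1)$ and $|\,\om_{n}-\tilde\om_{n}\,|\To 0$. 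Exactly as in $(1)$, compactness produces a locally convergent subsequence whose limit is a radiating homogeneous solution at $\om_{0}$, hence $0$; the same uniform decomposition/decay estimate then promotes this to $\normRtom{v_{n}}\To 0$, contradicting $\normRtom{v_{n}}=1$.

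The hard part, in both cases, is the final upgrade from $\ltlocomb$-compactness to strong convergence in the weighted $\Rtom$-norm with all constants independent of $\om\in\ovl{K}$; this rests on the polynomial-decay/Rellich estimate for radiating fields, and in part $(2)$ it must be carried out with the source only in $\lttom$, which is precisely why the indirect route --- rather than a resolvent identity --- is unavoidable there.
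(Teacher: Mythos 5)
Your overall route --- indirect (contradiction) arguments built on Weck's local selection theorem, the vanishing of $\gk{\,\calM-\om\,}$ for $\om\in\ovl{K}$, and the a-priori machinery of Section 4 --- is precisely the one the paper intends (the paper itself gives no proof, only the pointer to ``the same indirect arguments'' in the cited literature), and your argument for part $(1)$ is essentially sound: there the data $\rhs_{n}$ lie in $\ltsom$, so once the locally convergent subsequence is identified with the zero radiating solution, Lemma \ref{lem:pol-est_a-priori} (resp.\ its limit form \eqref{equ:main-res_a-priori-final}) applies to $\solu_{n}=\calL_{\om_{n}}\rhs_{n}$ and yields
\begin{align*}
	\normRtom{\solu_{n}}
	+\normltthatom{\big(\,\Laz+\sqrt{\eps_{0}\mu_{0}}\;\Xi\,\big)\solu_{n}}
	\leq c\cdot\Big(\normltsom{\rhs_{n}}+\normltomda{\solu_{n}}\Big)\To 0\,,
\end{align*}
contradicting the normalisation. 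Two points are glossed but harmless: $\that$ must be fixed in advance (it is supplied by Lemma \ref{lem:pol-est_a-priori}, not produced by the contradiction), and WLST cannot be applied to $\solu_{n}$ directly since the solutions are not solenoidal --- one first splits off the gradient parts via Lemma \ref{lem:main-result_helm-decomp}, exactly as in the proof of Theorem \ref{theo:bvp_fred-alt}\,$(4)$; the gradient parts converge strongly anyway, being $-\om_{n}^{-1}$ times the gradient part of $\rhs_{n}$.

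In part $(2)$, however, the decisive step fails as stated. You correctly observe that the source of the normalised difference $v_{n}:=w_{n}/\normRtom{w_{n}}$, where $w_{n}:=\calL_{\om_{n}}\rhs_{n}-\calL_{\tilde\om_{n}}\rhs_{n}$, namely $(\om_{n}-\tilde\om_{n})\,\calL_{\tilde\om_{n}}\rhs_{n}/\normRtom{w_{n}}$, lies only in $\lttom$ with $t<-1/2$; but then you assert that ``the same uniform decomposition/decay estimate'' promotes $v_{n}\To0$ in $\ltlocomb$ to $\normRtom{v_{n}}\To 0$. That estimate (Lemma \ref{lem:pol-est_decomp}, Lemma \ref{lem:pol-est_a-priori}, \eqref{equ:main-res_a-priori-final}) is available only for radiating solutions whose source lies in $\ltsom$ with $s>1/2$, so it cannot be applied to $v_{n}$ --- this is exactly the obstruction you yourself used to dismiss the resolvent identity, and acknowledging it as ``the hard part'' does not remove it. The upgrade must bypass the source altogether: apply the already-proven part $(1)$ a second time with a weight $t'\in(t,-1/2)$ to get a uniform bound $\norm{v_{n}}_{\lsymb^{2}_{t'}(\Om)}\leq C$ (this uses only that $\calL_{\om_{n}}\rhs_{n}$ and $\calL_{\tilde\om_{n}}\rhs_{n}$ are solutions with data in $\ltsom$, together with $\normRtom{w_{n}}\geq\eps/2$); then Lemma \ref{lem:app-tech_weighted-est} with the pair $t<t'$ gives, for every $\theta>0$,
\begin{align*}
	\normlttom{v_{n}}\leq c\cdot\normltomda{v_{n}}
	+\theta\cdot\norm{v_{n}}_{\lsymb^{2}_{t'}(\Om)}\,,
\end{align*}
so local convergence plus the $t'$-bound force $\normlttom{v_{n}}\To 0$; finally the differential equation,
\begin{align*}
	\Rot v_{n}=-i\La\Big(\,\om_{n}v_{n}
	+(\om_{n}-\tilde\om_{n})\,\calL_{\tilde\om_{n}}\rhs_{n}/\normRtom{w_{n}}\,\Big)\,,
\end{align*}
converts this into $\normlttom{\Rot v_{n}}\To 0$, hence $\normRtom{v_{n}}\To 0$, the desired contradiction. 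With this repair, which uses only tools already present in the paper, your proof of $(2)$ goes through.
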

%
%
\section{Polynomial Decay and A-Priori Estimate}
%
%
As stated before, we will construct a solution $\solu$ in the case of 
$\om\in\reals\sm(0)$ by solving \eqref{equ:bvp_weak-max-sys} for 
$\om_{n}=\om+i\sigma_{n}\in\C_{+}\hspace*{-0.07cm}\sm\reals$ and sending 
$\sigma_{n}\To 0$ (\,using  $(\om_{n})_{n\in\N}\in\C_{-}\sm\reals$ instead will 
lead to {``incoming''} solutions\,). The essential ingredients 
to generate convergence are the polynomial decay of eigensolutions, an 
a-priori-estimate for solutions corresponding to nonreal frequencies and Weck's 
local selection theorem. While the latter one is already satisfied 
(\,cf. Theorem \ref{theo:bvp_WST}\,), we obtain the 
first two in the spirit of \cite{weck_generalized1_1997} using the following 
decomposition Lemma introduced in \cite{pauly_niederfrequenzasymptotik_2003} 
\big(\,see also \cite{pauly_low_2006},\,\cite{pauly_polynomial_2012}\,\big).
\begin{lem}\label{lem:pol-est_decomp}
	Let $\om\in K\Subset\C\sm(0)$, $\eps$,\,$\mu$ be $\ka$-admissible with 
	$\ka\geq0$ and $s,t\in\reals$ such that $0\leq s\in\reals\sm\mathbb{I}$ 
	and $t\leq s\leq t+\ka$. Moreover, assume that $\solu\in\Rtom$ satisfies 
	$(\,\M-\om\,)\,\solu=\rhs\in\ltsom$. Then
	\begin{align*}
		\rhs_{1}:=\big(\,\mathrm{C}_{\Rot,\ceta}-i\om\ceta\hat{\La}\,\big)
		\hspace*{0.02cm}\solu-i\ceta\La\rhs\in\lts
	\end{align*}
	and by decomposing 
	\begin{align*}
		\rhs_{1}=\rhs_{\sfR}+\rhs_{\sfD}+\rhs_{\calS}
		 \in\zrs\dpl\zds\dpl\calS_{s}\\[-10pt]	
	\end{align*}
	according to {\rm\cite[Theorem 4]{weck_generalized_1994}} it holds
	\begin{align*}
		\rhs_{2}:=\rhs_{\sfD}+\frac{i}{\om}\,\tLaz\hspace*{-0.11cm}
		 \parbox[t]{0.35cm}{\vspace*{-0.4cm}${}^{-1}$}
		 \Rot\hspace*{0.01cm}\rhs_{\calS}\in\zds\,.
	\end{align*}
	Additionally, $\solu$ may be decomposed into
	\begin{align*}
		\solu=\eta\solu+\solu_{1}+\solu_{2}+\solu_{3}\,,	
	\end{align*}
	where
	\begin{enumerate}[label=$(\arabic*)$]
		\item $\eta\solu\in\rvoxom$ and for all $\that\in\reals$
			  \begin{align*}
			  		\normrthatom{\eta\solu}	
			  		\leq c\cdot\Big(\normltsom{\rhs}
			  		 +\normltsmkaom{\solu}\Big)\,,
			  \end{align*}
		\item $\dsp\solu_{1}
		  	   :=-\frac{i}{\om}\Laz^{-1}\big(\,\rhs_{\sfR}+\rhs_{\calS}\,\big)
		  	   \in\rs$ and
		  	   \begin{align*}
		  	   		\normrsws{\solu_{1}}\leq c\cdot\normltsws{\rhs_{1}}\,,	
		  	   \end{align*}
		\item $\dsp\solu_{2}:={\dsp\calF}^{\hspace*{0.03cm}-1}
			   \big(\,\rho^{-2}\big(\,1-ir\,\Xi\,\big)
			   \,\calF(\rhs_{2})\,\big)
			   \in\Hos\cap\zds$ and
			   \begin{align*}
					\normHosws{\solu_{2}}\leq c\cdot\normltsws{\rhs_{2}}\,,
			   \end{align*}
		\item $\solu_{3}:=\tsolu-\solu_{2}
			   \in\Htwot\cap\zdt$ and
			   for all $\that\leq t$
			   \begin{align*}
			   		\norm{\solu_{3}}	_{\Htwothat}
			   		\leq c\cdot\Big(\norm{\solu_{3}}_{\ltthat}
			   		+\norm{\solu_{2}}_{\Hothat}\Big)\,,
			   \end{align*}
			   where 
			   $\tsolu:=i\om^{-1}\Laz^{-1}
			    \big(\hspace*{0.03cm}\Rot\ceta\solu-\rhs_{\sfD}\,\big)
			    \in\Hot\cap\zdt$\\[-8pt]
	\end{enumerate}
	with constants $c\in(0,\infty)$ independent of $\solu$,\,$\rhs$\;or\;\,$\om$. 
	These fields solve the following equations:
	\begin{align*}
		\big(\hspace*{0.03cm}\Rot+\,i\om&\Laz\,\big)\,\ceta \solu=\rhs_{1}\,,
		\qquad 
		\big(\hspace*{0.03cm}\Rot+\,i\om\Laz\,\big)\hspace*{0.03cm}
		\tsolu=\rhs_{2}\,, 
		\qquad 
		\big(\hspace*{0.03cm}\Rot+\,i\om\Laz\,\big)\hspace*{0.03cm}\solu_{3}
		 =\big(\,1-\om\Laz\,\big)\,\solu_{2}\,,\\
		&\hspace*{0.6cm}\big(\,\Delta+\om^{2}\eps_0\mu_0\,\big)
		 \hspace*{0.03cm}\solu_{3}
		 =\big(\,1-i\om\tLaz\,\big)\,\rhs_{2}
		 -\big(\,1+\om^{2}\eps_0\mu_0\,\big)\hspace*{0.03cm}\solu_{2}\,.
	\end{align*}
	Moreover the following estimates hold for all\,\;$\that\leq t$ and uniformly 
	w.r.t.\;$\la\in K$,\;$\solu$\;and\hspace*{0.13cm}$\rhs$:\\[-10pt]
 	\begin{enumerate}[label=$\circ$, leftmargin=0.95cm,itemsep=4pt]
 		\item $\dsp\normltsws{\rhs_{2}}
 			   \leq c\cdot\normltsws{\rhs_{1}}
 			   \leq c\cdot\Big(\normltsom{\rhs}+\normltsmkaom{\solu}\Big)$	
 		\item $\dsp\normRthatom{\solu}\leq c\cdot\Big(\normltsom{\rhs}
 			   +\normltsmkaom{\solu}+\normltthatws{\solu_{3}}\Big)$
 		\item $\dsp\normltsws{\big(\,\Delta+\om^{2}\eps_{0}\mu_{0}\,\big)
 			   \hspace*{0.03cm}\solu_{3}}
 			   \leq c\cdot\Big(\normltsom{\rhs}+\normltsmkaom{u}\Big)$
 		\item $\dsp\normltthatws{\big(\hspace*{0.02cm}\Rot-\,i\la\hspace*{0.02cm}
 			   \sqrt{\eps_{0}\mu_{0}}\;\Xi\;\big)\hspace*{0.03cm}\solu}
 			   \leq c\cdot\Big(\normltsom{\rhs}+\normltsmkaom{\solu}
 			   +\normltthatws{\big(\hspace*{0.02cm}\Rot-\,i\la\hspace*{0.02cm}
 			   \sqrt{\eps_{0}\mu_{0}}\;\Xi\;\big)
 			   \hspace*{0.03cm}\solu_{3}}\Big)$
 	\end{enumerate}
    Here $\calS_{s}$ is a finite dimensional subspace of\;\,$\cicrthree$, $\calF$ 
    the Fourier transformation and 
    \begin{align*}
    	\mathrm{C}_{A,B}:=AB-BA
    \end{align*}
	the commutator of $A$ and $B$.
\end{lem}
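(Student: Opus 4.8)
The plan is to peel the exterior part of $\solu$ off with the cut-off $\ceta$, rewrite the localised first-order Maxwell system as a perturbation of the constant-coefficient operator $\Rot + i\om\Laz$, and then split its right-hand side by a Hodge--Helmholtz decomposition so that each summand can be inverted either algebraically, by Fourier analysis, or by elliptic regularity. I would work throughout with the equivalent form $\Rot\solu + i\om\La\solu = -i\La\rhs$ of $(\M-\om)\solu = \rhs$ (recall $\M = i\La^{-1}\Rot$). Multiplying by $\ceta$, commuting through $\Rot$ via $\Rot(\ceta\solu) = \ceta\Rot\solu + \mathrm{C}_{\Rot,\ceta}\solu$, and splitting $\La = \Laz + \hat{\La}$ regroups the terms exactly into $(\Rot + i\om\Laz)\ceta\solu = \rhs_1$ with $\rhs_1 = (\mathrm{C}_{\Rot,\ceta} - i\om\ceta\hat{\La})\solu - i\ceta\La\rhs$. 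The membership $\rhs_1 \in \lts$ is pure bookkeeping: $\mathrm{C}_{\Rot,\ceta}$ is supported in the compact set where $\nabla\ceta \neq 0$ and hence lands in every weighted $\lts$; the factor $\hat{\La} = \mathcal{O}(r^{-\ka})$ against $\solu \in \lttom$ gives $\ceta\hat{\La}\solu \in \lgen{}{2}{t+\ka} \subset \lts$ precisely because $s \le t+\ka$; and $\ceta\La\rhs \in \lts$ since $\rhs \in \ltsom$.

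Next I would apply the Hodge--Helmholtz decomposition of \cite[Theorem 4]{weck_generalized_1994} --- this is where $0 \le s \in \reals\sm\I$ enters --- to write $\rhs_1 = \rhs_\sfR + \rhs_\sfD + \rhs_\calS \in \zrs \dpl \zds \dpl \calS_s$, and set $\rhs_2 := \rhs_\sfD + \frac{i}{\om}\tLaz^{-1}\Rot\rhs_\calS$, which lies in $\zds$ because $\div\rot = 0$ componentwise and $\tLaz^{-1}$ is constant. The algebraic engine for everything that follows is the family of identities $\Laz^{-1}\Rot = \Rot\tLaz^{-1}$, $\Rot\Laz = \tLaz\Rot$, $\tLaz\Laz = \eps_0\mu_0\,\mathbbm{1}$, and $\Rot^2 = \Delta$ on divergence-free fields (from $\rot\rot = \grad\div - \Delta$), together with the Fourier symbol $\Rot \leftrightarrow ir\Xi$ and the relation $\Xi^2 = \mathrm{id}$ on transverse (divergence-free) fields.

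With these in hand I would introduce $\solu_1$, $\tsolu$, $\solu_2$ and $\solu_3 := \tsolu - \solu_2$ as in the statement and verify the four displayed equations by direct computation. A convenient intermediate is that the definitions of $\solu_1$ and $\tsolu$ combine with the $\ceta\solu$-equation to give $\ceta\solu = \solu_1 + \tsolu$; hence $\solu = \eta\solu + \ceta\solu = \eta\solu + \solu_1 + \solu_2 + \solu_3$ as claimed. The identity $(\Rot + i\om\Laz)\tsolu = \rhs_2$ then follows by subtracting the algebraically computable $(\Rot + i\om\Laz)\solu_1$ from $\rhs_1$, using $\Rot\rhs_\sfR = 0$; the field $\solu_2$ is designed so that $(1 + \Rot)\solu_2$ reproduces $\rhs_2$ through the symbol $\Rot \leftrightarrow ir\Xi$ and $\Xi^2 = \mathrm{id}$, whence the first-order equation for $\solu_3$ drops out by one more subtraction. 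Finally, applying $(\Rot - i\om\tLaz)$ to the $\solu_3$-equation and invoking $(\Rot - i\om\tLaz)(\Rot + i\om\Laz) = \Delta + \om^2\eps_0\mu_0$ on $\zdt$ produces the stated Helmholtz equation for $\solu_3$.

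It then remains to estimate the four pieces. For $\eta\solu$ the support is compact, so it lies in $\rvoxom$, all weights are equivalent there, and the bound $\normrthatom{\eta\solu} \le c(\normltsom{\rhs} + \normltsmkaom{\solu})$ follows after expressing $\rot(\eta\solu)$ through the differential equation. The estimate $\normrsws{\solu_1} \le c\,\normltsws{\rhs_1}$ is immediate from the continuity of the decomposition in \cite{weck_generalized_1994}, $\solu_1$ being an algebraic multiple of $\rhs_\sfR + \rhs_\calS$, and the regularity of $\solu_3$ in $\Htwot$ is standard interior elliptic regularity for $\Delta + \om^2\eps_0\mu_0$ applied to its Helmholtz equation. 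The genuinely analytic step --- and the one I expect to be the main obstacle --- is $\normHosws{\solu_2} \le c\,\normltsws{\rhs_2}$: this is the weighted Sobolev bound for the free resolvent realised by the Fourier multiplier $\rho^{-2}(1 - ir\,\Xi)$, where $\rho^{-2}$ supplies the derivative gain, the degree-one numerator accounts for the radiation behaviour, and the divergence-freeness of $\rhs_2$ (through $\Xi^2 = \mathrm{id}$) is exactly what makes the multiplier act cleanly; the exclusion $s \notin \I$ and the weighted Plancherel/Eidus machinery are essential here. Assembling the four bounds along $\solu = \eta\solu + \solu_1 + \solu_2 + \solu_3$ and keeping every constant dependent only on $K$, $\ka$ and the cut-offs yields the concluding bullet estimates, uniformly in $\om \in K$.
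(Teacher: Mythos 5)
Your outline coincides with the paper's own proof: the same cut-off identity $(\Rot+i\om\Laz)\,\ceta\solu=\rhs_{1}$, the same application of Weck's decomposition of $\rhs_{1}$, the same definitions of $\solu_{1}$, $\tsolu$, $\solu_{2}$, $\solu_{3}$, and the same algebra --- your factorization $(\Rot-i\om\tLaz)(\Rot+i\om\Laz)=\Delta+\om^{2}\eps_{0}\mu_{0}$ on divergence-free fields is just a repackaging of the paper's direct computation $\Delta\solu_{3}=\Rot(\Rot\solu_{3})$. Your alternative route to $\solu_{3}\in\Htwot$ (weighted elliptic regularity for the Helmholtz equation, i.e.\ Lemma \ref{lem:app-hh_reg}, instead of the paper's citation of \cite[Lemma 4.2]{kuhn_regularity_2010} applied to the first-order system) is legitimate and gives statement $(4)$ with its estimate.

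The genuine gap is assertion $(3)$, which you explicitly leave unresolved as ``the main obstacle'': the bound $\normHosws{\solu_{2}}\leq c\cdot\normltsws{\rhs_{2}}$ is never proved, and the mechanism you gesture at is partly misattributed. The restriction $s\notin\I$ is needed only for the Weck decomposition of $\rhs_{1}$ (as you correctly note earlier), and no limiting-absorption (``Eidus'') machinery enters this lemma at all --- that appears only later, in the proof of the main theorem. The paper's actual argument for $(3)$ is short and elementary, and you should supply it: since $\rhs_{2}\in\lts$ with $s\geq0$, Plancherel together with the parity identity $\calF\calF=\calP$ gives $\calF(\rhs_{2})\in\Hs$; the matrix symbol $\rho^{-2}\big(1-ir\,\Xi\big)$ is smooth with bounded derivatives, so multiplication by it preserves $\Hs$ (\cite[Lemma 3.2]{wloka_partial_1987}), whence $\solu_{2}=\calF^{-1}(\hsolu_{2})\in\lts$; the pointwise bound $|\hsolu_{2}|\leq c\,\rho^{-1}|\calF(\rhs_{2})|$ gives $\hsolu_{2}\in\lto$ and hence $\solu_{2}\in\Ho$; finally, from $(\Rot+1)\,\solu_{2}=\rhs_{2}$ one has $\Rot\solu_{2}=\rhs_{2}-\solu_{2}\in\lts$, and it is the weighted div-curl regularity of \cite[Lemma 4.2]{kuhn_regularity_2010} --- not a Fourier multiplier theorem --- that upgrades the divergence-free field $\solu_{2}\in\lts$ to $\solu_{2}\in\Hos$ with the stated estimate. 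Without this step neither $(3)$ nor the concluding bullet estimates (which feed $\normHosws{\solu_{2}}$ into the bounds for $\rhs_{2}$, $\solu_{3}$ and $\solu$) are established; the remaining estimates are, as you say and as the paper itself asserts, straightforward bookkeeping.
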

\noindent
Basically, this lemma allows us to split $\solu$ into two parts. One part 
(\,consisting of $\eta\solu$,\;$\solu_{1}$ and $\solu_{2}$\,) has better 
integrability properties and the other part (\,consisting of $\solu_{3}$\,) is 
more regular and satisfies a Helmholtz equation in the whole of $\rthree$. Thus 
we can use well known results from the theory for Helmholtz equation (\,cf.
\;Appendix, Section \ref{sec:pol-est_hh-theory}\,) to establish corresponding 
results for Maxwell's equations. We start with the polynomial decay of 
solutions, especially of eigensolutions, which will lead to assertions 
$(\mathrm{1})$ - $(\mathrm{3})$ of our main theorem. Moreover, this will also 
show, that the solution $\solu$ we are going to construct, can be chosen to be 
perpendicular to the generalized kernel of the time-harmonic Maxwell operator. 
As in the proof of \cite[Theorem 4.2]{pauly_polynomial_2012} we obtain 
(\,see also Appendix, Section \ref{sec:proofs-max}\,) the following.\\[-10pt]
\begin{lem}[\,Polynomial decay of solutions\,]
\label{lem:pol-est_polynomial-decay}
	Let $J\subset\reals\sm(0)$ be some interval, $\om\in J$, $\eps$,\,$\mu$ be
	$\ka$-admissible with $\ka>1$ and $s\in\reals\sm\mathbb{I}$ with $s>1/2$. If
	\begin{align*}
		u\in\Rbigom{-\frac{1}{2}}
		\;\,\text{ satisfies }\;\,
		\big(\,\M-\om\,\big)\hspace*{0.03cm}\solu=:\rhs\in\ltsom\,,	
	\end{align*}
	then
	\begin{align*}
		u\in\Rsmoom
		\qquad\text{and}\qquad
		\normRsmoom{\solu}\leq c\cdot\Big(\normltsom{f}+\normltomda{u}\Big)\,,	
	\end{align*}
	with $c,\da\in(0,\infty)$ independent of\,\;$\om$,\;$\solu$ and 
	$\rhs$.\\[-9pt]
\end{lem}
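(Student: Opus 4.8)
The plan is to carry the polynomial decay known for the scalar Helmholtz equation (\,Appendix, Section \ref{sec:pol-est_hh-theory}\,) over to the Maxwell system by means of the decomposition Lemma \ref{lem:pol-est_decomp}, and to reach the full weight $s-1$ by a finite induction on the decay exponent. Since $\solu\in\Rbigom{-\frac{1}{2}}$, there is a weight $t>-\tfrac{1}{2}$ with $\solu\in\Rtom$; we fix one such $t\in\reals\sm\I$, which costs nothing because the weighted spaces are monotone in $t$. As $\ka>1$, each step of the induction raises the admissible decay by at least $\ka-1>0$, so only finitely many steps are needed.

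\textbf{A single step.} Let $\solu\in\Rtom$ solve $(\M-\om)\,\solu=\rhs\in\ltsom$, and set $\sigma:=\min\{s,t+\ka\}$, perturbed slightly if necessary so that $\sigma\in\reals\sm\I$; then $\sigma\geq 0$ and $t\leq\sigma\leq t+\ka$, so Lemma \ref{lem:pol-est_decomp} applies with right--hand side weight $\sigma$ (\,using $\rhs\in\ltsom\subset\lgen{}{2}{\sigma}(\Om)$\,). It splits $\solu=\eta\solu+\solu_{1}+\solu_{2}+\solu_{3}$, where $\eta\solu\in\rvoxom$, and $\solu_{1},\solu_{2}$ lie in the spaces $\rs$ and $\Hos\cap\zds$ of Lemma \ref{lem:pol-est_decomp} with $s$ replaced by $\sigma$, hence are controlled in weight $\sigma$, while $\solu_{3}\in\Htwot\cap\zdt$ satisfies the Helmholtz equation $(\Delta+\om^{2}\eps_{0}\mu_{0})\,\solu_{3}=\rhs_{3}$ in the whole of $\rthree$ with $\rhs_{3}$ bounded in weight $\sigma$. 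Because $\solu_{3}$ has a priori weight $t>-\tfrac{1}{2}$ and inherits the correct outgoing radiation behaviour, the scalar polynomial--decay result of Section \ref{sec:pol-est_hh-theory} upgrades it to weight $\sigma-1$ with the corresponding estimate. Combining the four pieces yields $\solu\in\Rgen{}{}{\sigma-1}(\Om)$, an improvement of the decay exponent from $t$ to $\sigma-1=\min\{s-1,\,t+\ka-1\}$.

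\textbf{Induction and the final estimate.} Starting from $t>-\tfrac{1}{2}$ and iterating the single step, the decay exponent increases by $\ka-1$ per step until it reaches the range $[\,s-\ka,\,s-1\,]$; the number of steps is bounded by $(s-\tfrac{1}{2})/(\ka-1)$, independently of $\solu$. Once $\solu\in\Rtom$ with $s-\ka\leq t\leq s$, a last application of Lemma \ref{lem:pol-est_decomp} with the \emph{true} weight $s$ (\,which lies in $\reals\sm\I$ by hypothesis\,) together with the Helmholtz decay gives $\solu\in\Rsmoom$ and
\[
  \normRsmoom{\solu}\leq c\cdot\Big(\normltsom{\rhs}+\normltsmkaom{\solu}\Big)\,,
\]
the lower--order term being exactly $\normltsmkaom{\solu}$, the one produced by Lemma \ref{lem:pol-est_decomp} at weight $s$, with $c$ uniform in $\om\in J$ as in that lemma. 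Finally, since $\ka>1$ gives $s-\ka<s-1$ and we have just shown $\solu\in\lgen{}{2}{s-1}(\Om)$, the weighted estimate of Lemma \ref{lem:app-tech_weighted-est} bounds $\normltsmkaom{\solu}\leq\theta\,\norm{\solu}_{\lgen{}{2}{s-1}(\Om)}+c_{\theta}\,\normltomda{\solu}$ for any $\theta>0$ and a suitable radius $\da=\da(\theta)$; choosing $\theta$ small and absorbing the first term into the left--hand side turns the bound into $\normRsmoom{\solu}\leq c\,(\normltsom{\rhs}+\normltomda{\solu})$, as claimed.

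\textbf{Main obstacle.} The real work is hidden in turning $\solu_{3}$ into a genuine radiating Helmholtz field: one needs its right--hand side controlled in weight $\sigma$ \emph{and} the outgoing behaviour matched, so that the scalar results of Section \ref{sec:pol-est_hh-theory} apply verbatim. This is precisely what Lemma \ref{lem:pol-est_decomp} is designed to supply (\,cf.\ its radiation estimate involving $\Rot-i\la\sqrt{\eps_{0}\mu_{0}}\,\Xi$\,). With that in hand the remaining difficulties are of bookkeeping type: keeping the constants uniform in $\om\in J$ through the finitely many steps, respecting the exclusion of the half--integer weights $\I$ at each stage, and carrying out the final absorption -- none of these is deep, but all must be tracked carefully.
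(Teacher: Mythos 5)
Your proposal is correct and follows essentially the same route as the paper's own proof: decompose $\solu$ via Lemma \ref{lem:pol-est_decomp}, upgrade the whole-space Helmholtz part $\solu_{3}$ with the scalar polynomial decay of Lemma \ref{lem:app-hh_pol-dec}, iterate finitely many times to bridge the regime $t+\ka\leq s$ (the paper uses overlapping steps of size $(\ka-1)/2$, you use $\sigma=\min\{s,t+\ka\}$, i.e.\ steps of size $\ka-1$), and finally absorb the lower-order term with Lemma \ref{lem:app-tech_weighted-est}. Two harmless inaccuracies: Lemma \ref{lem:app-hh_pol-dec} requires no radiation condition, so the ``outgoing behaviour'' you insist must be matched for $\solu_{3}$ is superfluous (and indeed could not be guaranteed, since $\solu$ itself is not assumed radiating here), and the combined lower-order term is $\normltsmmom{\solu}$ with $m=\min\{\ka,2\}$ (the Helmholtz lemma contributes a weight $s-2$ term through $\solu_{3}$), not exactly $\normltsmkaom{\solu}$ -- but since $\min\{\ka,2\}>1$, your final absorption argument goes through unchanged.
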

\noindent
In short: If a solution $\solu$ satisfies $\solu\in\Rtom$ for some 
$t\hspace*{-0.01cm}>\hspace*{-0.02cm}-1/2$ and the right hand side
$\rhs=\big(\,\M-\om\,\big)\hspace*{0.03cm}\solu$ has better integrability 
properties, meaning $f\in\ltsom$ for some $s>1/2$, then also $\solu$ is
better integrable, i.e., $\solu\in\Rsmoom$. Especially, if 
\begin{align*}
	\solu\in\Rbigom{-\frac{1}{2}}
	\qquad\;\text{and}\qquad\; 
	f\in\ltsom\;\;\;\,\forall\;s\in\reals\,,
\end{align*}
then $\solu\in\Rsom$ for all $s\in\reals$, which is 
called \emph{``polynomial decay''}.\\[-10pt] 
\begin{cor}\label{cor:pol-est_polynomial-decay-es}
	Let $\om\in\reals\sm(0)$ and assume $\eps$,\,$\mu$ to be $\ka$-admissible 
	with $\ka>1$ and
	\begin{align*}
		\solu\in\Rsmtom{-\frac{1}{2}}\times\Rsmnom{-\frac{1}{2}}	
	\end{align*}
	to be a radiating solution (\,cf. Definition \ref{def:bvp_rad-sol}\,) of 
	$\big(\,\calM-\om\,\big)\hspace*{0.03cm}\solu=0$. Then:
	\begin{align*}
		\solu\in\bigcap_{t\in\reals}\big(\,\Rttom\times\Rtnom\,\big)\,.\\[-9pt]
	\end{align*}
\end{cor}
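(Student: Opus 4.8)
Since $\solu$ is a radiating solution of the \emph{homogeneous} problem, the right-hand side $\rhs:=(\M-\om)\,\solu$ vanishes, so trivially $\rhs\in\ltsom$ for \emph{every} $s\in\reals$. The plan is therefore to feed $\solu$ into the polynomial-decay Lemma \ref{lem:pol-est_polynomial-decay} and iterate until the weight is arbitrary. The only mismatch is at the critical weight: Definition \ref{def:bvp_rad-sol} merely provides $\solu\in\Rsmtom{-\frac{1}{2}}\times\Rsmnom{-\frac{1}{2}}$, i.e.\ decay strictly \emph{below} $-\tfrac12$, whereas Lemma \ref{lem:pol-est_polynomial-decay} requires $\solu\in\Rbigom{-\frac{1}{2}}$, i.e.\ decay strictly \emph{above} $-\tfrac12$. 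Since $\bigcap_{t<-1/2}\Rt$ is not contained in $\bigcup_{t>-1/2}\Rt$, the first and crucial task is to cross this threshold, and this is exactly the step that uses the radiation condition \eqref{equ:bvp_rad-sol-cond2}.

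To cross the threshold I would apply the decomposition Lemma \ref{lem:pol-est_decomp} with input weight $t$ chosen just below $-\tfrac12$ (so that $\solu\in\Rtom$). Because $\rhs=0$, the induced datum reduces to $\rhs_{1}=(\mathrm{C}_{\Rot,\ceta}-i\om\ceta\hat{\La})\,\solu$, which splits into a compactly supported piece coming from $\nabla\ceta$ (lying in every $\lts$) and the piece $\ceta\hat{\La}\solu$; as $\hat{\La}=\mathcal{O}(r^{-\ka})$ with $\ka>1$, the latter gains decay $\ka$ and lands in a space of weight $t+\ka>\tfrac12$. The regular part $\solu_{3}$ then solves a Helmholtz equation $(\Delta+\om^{2}\eps_{0}\mu_{0})\,\solu_{3}=\cdots$ on all of $\rthree$ and inherits a Sommerfeld-type radiation condition from the Silver--M\"uller condition. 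Invoking the scalar Helmholtz polynomial-decay theory of the Appendix (Section \ref{sec:pol-est_hh-theory}), which is designed precisely to cross the weight $-\tfrac12$ for radiating solutions, yields decay of $\solu_{3}$ above $-\tfrac12$; transferring this back through the estimates of Lemma \ref{lem:pol-est_decomp} (recalling $\solu=\eta\solu+\solu_{1}+\solu_{2}+\solu_{3}$, whose first three summands already have the required integrability) produces the foothold $\solu\in\Rbigtom{-\frac{1}{2}}\times\Rbignom{-\frac{1}{2}}$.

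With the foothold secured, the bootstrap is routine. Given $\solu\in\Rtom\times\Rtnom$ for some $t>-\tfrac12$ and $\rhs=0\in\ltsom$, I would apply Lemma \ref{lem:pol-est_polynomial-decay} with an $s$ chosen in the admissible window $t\le s\le t+\ka$ subject to $s>\tfrac12$ and $s\notin\I$; since $\ka>1$ one may take, e.g., $s=t+\tfrac{1+\ka}{2}$, so the conclusion $\solu\in\Rsmoom$ improves the weight to $s-1=t+\tfrac{\ka-1}{2}>t$, a fixed positive gain. Finitely many iterations thus surpass any prescribed weight, giving $\solu\in\bigcap_{t\in\reals}(\Rttom\times\Rtnom)$. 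The mixed boundary conditions on $\Gat$ and $\Gan$ persist through every step, because the cut-off multiplications and the decomposition respect them: the near-field summand $\eta\solu$ retains the boundary data, while the far-field contributions are supported away from $\Ga$.

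The main obstacle is the crossing of the critical weight $-\tfrac12$ in the second paragraph. It cannot be obtained by soft bootstrapping and genuinely relies on the radiation condition, with the hard analysis exported to the scalar Helmholtz radiation theory via the decomposition, exactly in the spirit of \cite[Theorem 4.2]{pauly_polynomial_2012} (see also Section \ref{sec:proofs-max}). Everything above the threshold is then an elementary finite induction on the weight.
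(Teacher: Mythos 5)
Your overall skeleton is the right one: reduce the corollary to producing a \emph{single} weight $\that>-1/2$ with $\solu\in\Rthattom\times\Rthatnom$, after which Lemma \ref{lem:pol-est_polynomial-decay} with $\rhs=0\in\ltsom$ for every $s$ finishes the proof (your manual iteration with gain $(\ka-1)/2$ is not needed — that iteration is already built into the proof of Lemma \ref{lem:pol-est_polynomial-decay}, whose statement carries no constraint $s\le t+\ka$; that window belongs to Lemma \ref{lem:pol-est_decomp}). The genuine gap is in your threshold-crossing step. You decompose $\solu$ via Lemma \ref{lem:pol-est_decomp} and then claim that the scalar Helmholtz theory of the Appendix "is designed precisely to cross the weight $-\tfrac12$ for radiating solutions". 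It is not: the only scalar decay result available, Lemma \ref{lem:app-hh_pol-dec}, has the hypothesis $t>-1/2$, i.e.\ it improves decay only once one is \emph{already above} the threshold, and the scalar a-priori estimate, Lemma \ref{lem:app-hh_a-priori}, concerns nonreal frequencies $\beta\in\C_{+}$ and serves the limiting absorption argument, not real-frequency radiating solutions. Nothing in the paper converts a Sommerfeld condition into decay above $-1/2$; moreover, you only assert, and do not prove, that the components of $\solu_{3}$ satisfy a Sommerfeld condition — passing from the Silver--M\"uller condition \eqref{equ:bvp_rad-sol-cond2} for $\solu$ to a componentwise condition of the form $(\pr-i\om\sqrt{\eps_{0}\mu_{0}})\solu_{3}$ is itself a nontrivial step. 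So your plan stalls exactly at the point you identify as the crux.

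The paper crosses the threshold by a short, self-contained computation at the Maxwell level, with no decomposition and no scalar theory. With $\Phi=(1+r^{2})^{\that}$ one expands, on annuli $\Gaprztil$,
\begin{align*}
\normltthatGaprztil{\big(\,\Laz+\sqrt{\eps_{0}\mu_{0}}\;\Xi\,\big)\solu}^{2}
=\normltthatGaprztil{\Laz\solu}^{2}
+2\sqrt{\eps_{0}\mu_{0}}\;\Re\scpltthatGaprztil{\Xi\,\solu}{\Laz\solu}
+\eps_{0}\mu_{0}\normltthatGaprztil{\Xi\,\solu}^{2}\,,
\end{align*}
and shows via the partial-integration Lemma \ref{lem:app-tech_part-Int} and the equation $\Rot\solu=-i\om\La\solu$ that the mixed term equals $\Re\big(i\om\,\scpltomrtil{\Psi\La\solu}{(\tLaz-\Laz)\solu}\big)=0$, since the bracket is real and $\om\in\reals$. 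Hence $\normltthatGaprztil{\solu}\le c\cdot\normltthatGaprztil{(\Laz+\sqrt{\eps_{0}\mu_{0}}\,\Xi)\solu}$ with $c$ independent of $\rtil$, and monotone convergence together with the radiation condition \eqref{equ:bvp_rad-sol-cond2} gives $\solu\in\ltthatom$ for the very $\that>-1/2$ furnished by that condition; the differential equation then upgrades this to $\solu\in\Rthattom\times\Rthatnom$. If you insist on your route, you would have to import from outside the paper both a scalar Rellich/limiting-absorption theorem crossing $-1/2$ under the Sommerfeld condition and a proof that $\solu_{3}$ satisfies it — neither is available within the paper's toolbox.
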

\begin{proof}
	According to Lemma \ref{lem:pol-est_polynomial-decay}, it suffices to show 
	$\solu\in\Rtom$ for some $t>-1/2$. Therefore, remember that $\solu$ is a 
	radiating solution, the radiation condition 
	\eqref{equ:bvp_rad-sol-cond2} holds and there exists $\that>-1/2$ such that 
	\begin{align}\label{equ:pol-est_pol-dec-es-rc}
		\big(\,\Laz+\sqrt{\eps_{0}\mu_{0}}\;\Xi\;\big)\hspace*{0.03cm}&\solu
		\in\ltthatom\,.
	\end{align}
	On the other hand we have\\[-8pt]
	\begin{align*}
		&\normltthatGaprztil{\big(\hspace*{0.02cm}\Laz+\sqrt{\eps_{0}\mu_{0}}
		 \;\Xi\,\big)\hspace*{0.02cm}\solu}^{2}\\
		&\qquad=\normltthatGaprztil{\Laz\hspace*{0.02cm}\solu}^{2}
		 +2\,\sqrt{\eps_{0}\mu_{0}}\;\Re\scpltthatGaprztil{\Xi\,\solu}
		 {\Laz\hspace*{0.02cm}\solu}
		 +\eps_{0}\mu_{0}\normltthatGaprztil{\Xi\,\solu}^{2}
	\end{align*}
	and using Lemma \ref{lem:app-tech_part-Int} 
	(\,cf. Appendix, Section \ref{sec:tech-tools}\hspace*{0.02cm}) with
	\begin{align*}
		\phi(s):=(1+s^{2})^{\that},
		\qquad
		\Phi:=\phi\circ r,
		\qquad
		\psi(\sigma)=\int_{\max\{r_{0},\sigma\}}^{\rtil}\phi(\tau)\,d\tau,
		\qquad
		\Psi=\psi\circ r\,,	
	\end{align*}
	as well as the differential equation, we conclude
	\begin{align*}
		\Re\scpltthatGaprztil{\Xi\,\solu}{\Laz\hspace*{0.02cm}\solu}
		&=\Re\scpltGaprztil{\Phi\,\Xi\,\solu}{\Laz\hspace*{0.02cm}\solu}\\
		&=\Re\Big(\,\scpltomrtil{\Psi\Rot\hspace*{0.02cm}\solu}
		{\Laz\hspace*{0.02cm}\solu}+\scpltomrtil{\Psi\hspace*{0.02cm}\solu}
		{\tLaz\hspace*{0.02cm}\Rot\hspace*{0.02cm}\solu}\,\Big)\\
		&=\Re\Big(\,\scpltomrtil{-i\om\Psi\La\hspace*{0.02cm}\solu}
		{\Laz\hspace*{0.02cm}\solu}+\scpltomrtil{\Psi\hspace*{0.02cm}\solu}
		{-i\om\tLaz\hspace*{0.02cm}\La\hspace*{0.02cm}\solu}\,\Big)\\
		&=\Re\,\underbrace{i\om\hspace*{0.02cm}
		\scpltomrtil{\Psi\hspace*{0.02cm}\La\hspace*{0.02cm}\solu}
		{\big(\,\tLaz-\Laz\,\big)\hspace*{0.02cm}\solu}}_{\in i\reals}=0\,,
	\end{align*}
	hence 
	\begin{align*}
		\normltthatGaprztil{\solu}
		\leq c\cdot\normltthatGaprztil{\big(\hspace*{0.02cm}
		\Laz+\sqrt{\eps_{0}\mu_{0}}\;\Xi\,\big)\hspace*{0.02cm}\solu}
	\end{align*}
	with $c\in(0,\infty)$ independent of $\rtil$. Now the monotone convergence 
	theorem and \eqref{equ:pol-est_pol-dec-es-rc} show\\[-6pt]
	\begin{align*}
	    \norm{\solu}_{\ltthat(\cU(r_{0}))}
	     \leq c\cdot\norm{\big(\,\Laz+\sqrt{\eps_0\mu_0}\;\Xi\,\big)
	     \hspace*{0.02cm}\solu}_{\ltthat(\cU(r_{0}))}<\infty\,,
	\end{align*}
	which already implies $\solu\in\ltthatom$ and completes the proof.
\end{proof}
\noindent
The next step is an a-priori estimate for solutions corresponding to nonreal 
frequencies, which will later guarantee that our solution satisfies the radiation 
condition \eqref{equ:bvp_rad-sol-cond2} and has the proper integrability. 
The proof of it is practically identical with the proof of 
\cite[Lemma 6.3]{pauly_polynomial_2012} 
(\,cf. Appendix, Section \ref{sec:proofs-max}\,).\\[-10pt]
\begin{lem}[A-priori estimate for Maxwell's equations]
\label{lem:pol-est_a-priori}
	Let $J\Subset\reals\sm(0)$ be some interval, $-t$,\,$s>1/2$ and 
	$\eps$,\,$\mu$ be $\ka$-admissible with $\ka>1$. Then there exist constants 
	$c,\da\in(0,\infty)$ and some $\that>-1/2$, such that for all 
	$\om\in\C_{+}$ with $\om^{2}=\la^{2}+i\la\sigma$, 
	$\la\in J$, $\sigma\in\big(\,0,\hspace*{0.01cm}
	{\sqrt{\eps_{0}\mu_{0}}\hspace*{0.02cm}}^{-1}\,\big]$ and 
	$\rhs\in\ltsom$
	\begin{align*}
		\normRtom{\sol\rhs}+\normltthatom{\big(\hspace*{0.02cm}\Laz
		+\sqrt{\eps_{0}\mu_{0}}\;\Xi\,\big)
		\hspace*{0.02cm}\sol\rhs}
		\leq c\cdot\Big(\normltsom{\rhs}+\normltomda{\sol\rhs}\Big)\,.\\[-9pt]
	\end{align*}
\end{lem}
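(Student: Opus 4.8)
The plan is to reduce the vectorial Maxwell estimate to the scalar Helmholtz a-priori estimate of the Appendix (Section~\ref{sec:pol-est_hh-theory}) by means of the decomposition Lemma~\ref{lem:pol-est_decomp}, following \cite[Lemma 6.3]{pauly_polynomial_2012} almost verbatim. First, since $\om\in\C_{+}$ with $\sigma>0$ lies in $\C\sm\reals\subset\C\sm(0)$, the resolvent $\sol=(\calM-\om)^{-1}$ exists and $\solu:=\sol\rhs\in\Rztom\times\Rznom$ solves $(\M-\om)\,\solu=\rhs$; as $t<-\tfrac12<0$, this field meets the hypotheses of Lemma~\ref{lem:pol-est_decomp}. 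Two harmless reductions are made: replacing $s$ by a slightly smaller value we may assume $s\in\reals\sm\I$ (this only weakens the right-hand side, since weighted $\lt$-norms are monotone in the weight), and replacing $s$ by $\min\{s,t+\ka\}$ we may assume $t\le s\le t+\ka$. Then Lemma~\ref{lem:pol-est_decomp} applies and yields the splitting $\solu=\eta\solu+\solu_{1}+\solu_{2}+\solu_{3}$ together with its accompanying estimates, all uniform for $\om$ in the compact frequency range described by $\la\in J$ and $\sigma\in(0,\sqrt{\eps_{0}\mu_{0}}^{-1}]$.

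The three pieces $\eta\solu,\solu_{1},\solu_{2}$ are already harmless: by assertions $(1)$--$(3)$ of Lemma~\ref{lem:pol-est_decomp} they are bounded, in norms of weight at least $s>\tfrac12$ (resp.\ arbitrary weight for $\eta\solu\in\rvoxom$), by $c\cdot(\normltsom{\rhs}+\normltsmkaom{\solu})$, so they contribute admissibly both to $\normRtom{\solu}$ and to any radiation term at weight $\that\le s$, in particular at some $\that>-\tfrac12$. The entire difficulty is thus concentrated in $\solu_{3}\in\Htwot\cap\zdt$, which solves the genuine Helmholtz equation $(\Delta+\om^{2}\eps_{0}\mu_{0})\,\solu_{3}=g_{3}$ on all of $\rthree$ with $\normltsws{g_{3}}\le c\cdot(\normltsom{\rhs}+\normltsmkaom{\solu})$ by the third displayed estimate of the lemma.

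Applying the scalar Helmholtz a-priori estimate of Section~\ref{sec:pol-est_hh-theory} to each Cartesian component of $\solu_{3}$ produces some $\that>-\tfrac12$ and the bound $\normlttws{\solu_{3}}+\normltthatws{(\pr-i\la\sqrt{\eps_{0}\mu_{0}})\,\solu_{3}}\le c\cdot(\normltsws{g_{3}}+\normltomda{\solu_{3}})$, in which the Sommerfeld expression gains decay. The key step is then to convert this componentwise radiation expression into the vectorial one: since $\solu_{3}$ is divergence free, the pointwise identity $\Rot\,v=\Xi\,\pr v+r^{-1}(\,\ldots\,)$ gives $(\Rot-i\la\sqrt{\eps_{0}\mu_{0}}\,\Xi)\,\solu_{3}=\Xi\,(\pr-i\la\sqrt{\eps_{0}\mu_{0}})\,\solu_{3}+\mathcal{O}(r^{-1})\,\solu_{3}$, the angular remainder gaining a power of $r$ and being absorbed into $\normltomda{\solu_{3}}$. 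This yields the vectorial radiation bound for $\solu_{3}$ at the improved weight $\that>-\tfrac12$.

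Finally, feeding the bounds for $\solu_{3}$ into the second and fourth displayed estimates of Lemma~\ref{lem:pol-est_decomp} controls $\normRtom{\solu}$ (choosing $\that=t$) and the Maxwell radiation term $\normltthatws{(\Rot-i\la\sqrt{\eps_{0}\mu_{0}}\,\Xi)\,\solu}$. The differential equation together with $\hat{\La}=\mathcal{O}(r^{-\ka})$ and $\om^{2}=\la^{2}+i\la\sigma$ (so that $\om$ stays uniformly close to $\la$) then lets us replace the Maxwell radiation expression by $-i\la\,(\Laz+\sqrt{\eps_{0}\mu_{0}}\,\Xi)\,\solu$ up to terms bounded by $c\cdot(\normltsom{\rhs}+\normltsmkaom{\solu})$, producing the desired Silver--M\"uller term $\normltthatom{(\Laz+\sqrt{\eps_{0}\mu_{0}}\,\Xi)\,\solu}$. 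Choosing $\da\le\ka-s$ (possible since $s\le t+\ka$ and $t<-\tfrac12$ force $\ka-s>\tfrac12$) makes $\normltsmkaom{\solu}\le\normltomda{\solu}$ and $\normltomda{\solu_{3}}\le c\cdot(\normltsom{\rhs}+\normltomda{\solu})$, and absorbing these completes the estimate. The main obstacle is precisely the weight bookkeeping of the last two paragraphs: upgrading the radiation term to a weight $\that>-\tfrac12$ strictly larger than the weight $t<-\tfrac12$ carried by $\solu_{3}$ itself, which hinges on the divergence-free structure (so only the radial derivative survives to leading order) and on the decay $\hat{\La}=\mathcal{O}(r^{-\ka})$ with $\ka>1$ (so the error between $\La$ and $\Laz$ lands in $\ltsmkaom$); the uniformity of all constants as $\sigma\to0$, where $\sol$ itself degenerates, is inherited from the uniform constants of Lemma~\ref{lem:pol-est_decomp} and the Helmholtz theory over the compact set of admissible $\om$.
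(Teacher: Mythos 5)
Your overall skeleton (decomposition Lemma \ref{lem:pol-est_decomp} plus the scalar Helmholtz a-priori estimate of Lemma \ref{lem:app-hh_a-priori}) is the paper's, but the crucial step — upgrading the radiation term to a weight $\that>-\tfrac12$ — contains genuine errors. First, you apply Lemma \ref{lem:pol-est_decomp} at the weight $t<-\tfrac12$; but its displayed estimates (in particular the second and fourth bullets you invoke) hold only for $\that\le t$, so they can never yield a radiation bound at a weight exceeding $-\tfrac12$. The paper avoids this by applying the decomposition at weight $0$: for nonreal $\om$ the resolvent solution satisfies $\sol\rhs\in\Rztom\times\Rznom$ (unweighted), and after reducing to $s\in(\tfrac12,1)$ one has $0\le s<\ka$, so the bullets are available for all $\that\le 0$ and the choice $\that=s-1\in(-\tfrac12,0)$ is admissible. (Your reduction ``replace $s$ by $\min\{s,t+\ka\}$'' also breaks down when $t+\ka\le\tfrac12$, since you never first push $t$ up towards $-\tfrac12$.) Second, your conversion of the scalar estimate into the vectorial radiation term is wrong: in the identity $\Rot v=\Xi\,\pr v+\ldots$ the remainder is $r^{-1}$ times \emph{angular derivatives} of $v$, not $r^{-1}v$, and no divergence-free argument converts first-order angular derivatives into zero-order terms; moreover $\normltomda{\cdot}$ is an $\lt$-norm over the \emph{bounded} set $\Omda$, so it cannot absorb any term living near infinity. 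The paper's mechanism is the exact conjugation identity $\Rot\big(e^{-i\la\sqrt{\eps_{0}\mu_{0}}\,r}\solu_{3}\big)=e^{-i\la\sqrt{\eps_{0}\mu_{0}}\,r}\big(\Rot-i\la\sqrt{\eps_{0}\mu_{0}}\,\Xi\big)\solu_{3}$ together with the pointwise bound $|\Rot w|\le|\nabla w|$, so that $\normltsmows{(\Rot-i\la\sqrt{\eps_{0}\mu_{0}}\,\Xi)\solu_{3}}\le\normhosmtwows{e^{-i\la\sqrt{\eps_{0}\mu_{0}}\,r}\solu_{3}}$ follows directly from Lemma \ref{lem:app-hh_a-priori}; divergence-freeness plays no role.

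Two further points would still need repair even if the above were fixed. Passing between $\om$ and $\la$ is not a matter of ``$\om$ stays uniformly close to $\la$'': one has $|\om-\la|\le c\,\sigma$ with $\sigma$ ranging up to $\sqrt{\eps_{0}\mu_{0}}^{\,-1}$, and the error term $(\om-\la)\sqrt{\eps_{0}\mu_{0}}\,\Xi\,\solu_{3}$ involves $\solu_{3}$ at a weight where it is not controlled by the right-hand side. The paper bounds it via the imaginary-part (resolvent) estimate $\eps_{0}\mu_{0}\,\sigma|\la|\,\norm{\solu_{3}}_{\lt}\le\norm{\rhs_{3}}_{\lt}$, which converts the factor $\sigma$ into control by $\normltsws{\rhs_{3}}$; without this the error cannot be absorbed. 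Finally, your inequality $\normltsmkaom{\solu}\le\normltomda{\solu}$ obtained by ``choosing $\da\le\ka-s$'' is false for the same reason as above (a global weighted norm cannot be dominated by a norm over the bounded region $\Omda$, and $\da$ in $\normltomda{\cdot}$ is a radius, not a weight); the correct tool is Lemma \ref{lem:app-tech_weighted-est}, which gives $\normltsmkaom{\solu}\le c\,\normltomda{\solu}+\theta\,\normlttom{\solu}$ with the small term $\theta\,\normlttom{\solu}$ then absorbed into the left-hand side.
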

%
%
\section{Proof of the Main Result}
\label{sec:main-result}
%
%
\noindent 
Before we start with the proof of Theorem \ref{theo:bvp_fred-alt} we provide some 
Helmholtz type decompositions, which will be useful in the following. These are 
immediate consequences of the projection theorem and Lemma 
\ref{lem:prel_relationships}.\\[-10pt]
\begin{lem}\label{lem:main-result_helm-decomp}
	It holds\\[-8pt]
	\begin{alignat*}{2}
		\lgen{}{2}{\eps}(\Om)
			&=\ovl{\dsp\nabla\Hoztom}\oplus_{\eps}\eps^{-1}\zdznom\,,
			\quad\quad&
		\lgen{}{2}{\mu}(\Om)
			&=\ovl{\dsp\nabla\Hoznom}\oplus_{\mu}\mu^{-1}\zdztom\,,\\
		\Rztom &=\ovl{\dsp\nabla\Hoztom}\oplus_{\eps}
		         \Big(\Rztom\cap\eps^{-1}\zdznom\Big)\,,
		\quad\quad&
		\Rznom &=\ovl{\dsp\nabla\Hoznom}\oplus_{\mu}
		         \Big(\Rznom\cap\mu^{-1}\zdztom\Big)\,,
	\end{alignat*}
	where the closures are taken in $\ltom$.\\[-9pt]
\end{lem}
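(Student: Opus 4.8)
The plan is to read all four identities off the abstract Helmholtz decompositions $\mathsf{H}_{1}=\ovl{\rg(\A^{\ast})}\oplus\ker(\A)$ and $\mathsf{H}_{2}=\ovl{\rg(\A)}\oplus\ker(\A^{\ast})$ recorded in the functional analysis subsection, applied to a single operator, the weak gradient. First I would set $\mathsf{H}_{1}:=\ltom$ (scalar fields) and $\mathsf{H}_{2}:=\lgen{}{2}{\eps}(\Om)$, and consider the densely defined, closed operator $\A:=\nabla$ with domain $\dod(\A):=\Hoztom\subset\mathsf{H}_{1}$ and values in $\mathsf{H}_{2}$. Then $\rg(\A)=\nabla\Hoztom$, so that $\ovl{\rg(\A)}=\ovl{\dsp\nabla\Hoztom}$, the closure being taken in $\ltom$, equivalently in $\lgen{}{2}{\eps}(\Om)$ since the two norms are equivalent.

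The key step is the identification of the adjoint. For $u\in\Hoztom$ and $H\in\dod(\A^{\ast})$ one has $\scp{\A u}{H}_{\eps}=\scpltom{\nabla u}{\eps H}$, so by the weak characterisation of the normal boundary condition on $\Ga_{2}$ provided by Lemma \ref{lem:prel_relationships} one obtains $\dod(\A^{\ast})=\eps^{-1}\Dznom$ together with $\A^{\ast}H=-\div(\eps H)$, and hence $\ker(\A^{\ast})=\eps^{-1}\zdznom$. Feeding this into $\mathsf{H}_{2}=\ovl{\rg(\A)}\oplus\ker(\A^{\ast})$ yields the first decomposition. The second one follows verbatim after replacing $\eps$ by $\mu$ and interchanging the roles of $\Ga_{1}$ and $\Ga_{2}$, i.e.\ taking $\A=\nabla$ with domain $\Hoznom$ into $\lgen{}{2}{\mu}(\Om)$, whose adjoint is $-\div(\mu\,\cdot\,)$ on $\mu^{-1}\Dztom$ with kernel $\mu^{-1}\zdztom$.

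For the two refined decompositions I would intersect the first (resp.\ the second) with $\Rztom$ (resp.\ $\Rznom$). The crucial observation is that $\ovl{\dsp\nabla\Hoztom}\subset\Rztom$: every $\nabla u$ with $u\in\Hoztom$ lies in $\zRztom$ by Lemma \ref{lem:prel_relationships}, and if $\nabla u_{n}\to E$ in $\ltom$ then $\rot E=\lim_{n}\rot\nabla u_{n}=0$ in the distributional sense, so the $\ltom$-convergence is already convergence in $\Rtom$ and $E\in\zRztom\subset\Rztom$. Consequently, given $E\in\Rztom$, its $\eps$-orthogonal splitting $E=\nabla\phi+H$ coming from the first decomposition has $\nabla\phi\in\ovl{\dsp\nabla\Hoztom}\subset\Rztom$, whence $H=E-\nabla\phi\in\Rztom\cap\eps^{-1}\zdznom$; this establishes the third identity, and the fourth is obtained by the same argument with $\mu$ in place of $\eps$ and $\Ga_{2}$ in place of $\Ga_{1}$.

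I expect the main obstacle to be precisely the adjoint computation, namely verifying that the weak ($\cicntom$-tested) and the strong descriptions of the normal boundary condition on the complementary part $\Ga_{2}$ coincide, so that $\dod(\A^{\ast})$ equals exactly $\eps^{-1}\Dznom$ and not some larger space. This matching is exactly the content of Lemma \ref{lem:prel_relationships}, which is why the statement is phrased as a consequence of it; once the adjoint and its kernel are pinned down, everything else is a formal application of the projection theorem and the elementary closure argument above.
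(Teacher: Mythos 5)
Your proposal is correct and follows essentially the same route as the paper: the paper's proof likewise considers $\nabla$ on $\Hoztnom$ as a densely defined closed operator into $\lgen{}{2}{\ga}(\Om)$, identifies its adjoint as $-\div_{j}\ga$ on $\ga^{-1}\Dzntom$ via Lemma \ref{lem:prel_relationships}, applies the projection theorem, and obtains the refined decompositions from $\nabla\Hoztnom\subset\rztnom$. Your explicit closure argument for $\ovl{\nabla\Hoztom}\subset\Rztom$ and the intersection step merely spell out what the paper compresses into its final sentence.
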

\begin{proof}
	Let $\ga\in\{\eps,\mu\}$ and $i,j\in\{1,2\}$ with $i\neq j$. The linear 
	operator 
	\begin{align*}
		\map{\nabla_{i}}{\Hoztnom\subset\ltom}{\lgen{}{2}{\ga}(\Om)}	
	\end{align*}
	is densely defined and closed with adjoint 
	(\,cf. Lemma \ref{lem:prel_relationships}\,)
	\begin{align*}
		\map{-\div_{j}\ga}{\ga^{-1}\Dzntom\subset\lgen{}{2}{\ga}(\Om)}{\ltom}\,.
	\end{align*}
	The projection theorem yields 
	\begin{align*}
		\lgen{}{2}{\ga}(\Om)
		  =\ovl{\rg(\nabla_{i})}\oplus_{\ga}\ker(\div_{j}\ga)\,.
	\end{align*}
	The remaining assertion follows by $\nabla\Hoztnom\subset\rztnom$.
\end{proof}
\begin{proof}[\rm\textbf{Proof of Theorem \ref{theo:bvp_fred-alt}}]
Let $\om\in\reals\sm(0)$ and $\eps,\mu$ be $\ka$-admissible for 
some $\ka>1$.\\[8pt]
$(\mathrm{1})$: The assertion follows by Corollary 
\ref{cor:pol-est_polynomial-decay-es}
and the differential equation 
\begin{align*}
	\big(\,\M-\om\,\big)\,\solu=0
	\;\Longleftrightarrow\;
	\solu=i\om^{-1}\Lambda^{-1}\Rot\solu	\,,
\end{align*}
using the fact that (\,cf. Lemma \ref{lem:prel_relationships}\,)
\begin{align*}
	\rot\Rttom\subset\zdttom
	\qquad\text{resp.}\qquad
	\rot\Rtnom\subset\zdtnom\,.
\end{align*}
$(\mathrm{2})$: Let us assume that $\dim\gk{\,\calM-\om\,}=\infty$. Using (1) there exists a 
$\ltLa$-orthonormal sequence $(\solu_{n})_{n\in\N}\subset\gk{\,\calM-\om\,}$ 
converging weakly in $\ltom$ to 0. By the differential equation this sequence is 
bounded in $\big(\,\Rztom\cap\eps^{-1}\zdznom\,\big)
\times\big(\,\Rznom\cap\mu^{-1}\zdztom\,\big)$. Hence, due to Weck's local 
selection theorem, we can choose a subsequence, $(\solu_{\pi(n)})_{n\in\N}$ 
converging to 0 in $\ltloc(\Omb)$ (\,$(\solu_{\pi(n)})_{n\in\N}$ also converges 
weakly on every bounded subset\,). Now let $1<s\in\reals\sm\mathbb{I}$. Then 
Lemma \ref{lem:pol-est_polynomial-decay} guarantees the existence of 
$c,\da\in(0,\infty)$ independent of $(\solu_{\pi(n)})_{n\in\N}$ such that
\begin{align*}
	1=\normltLaom{\solu_{\pi(n)}}
	 \leq c\cdot\normRsmoom{\solu_{\pi(n)}}
	 \leq c\cdot\norm{\solu_{\pi(n)}}_{\ltomda}
	 \xrightarrow{\,\;n\rightarrow\infty\;\,}0	
\end{align*}
holds; a contradiction.\\[8pt]
$(\mathrm{3})$: $\calM$ is a selfadjoint operator, hence we clearly have 
$\gs\subset\reals\sm(0)$. Now assume $\widetilde{\om}\in\reals\sm(0)$ is an 
accumulation point of $\gs$. Then we can choose a sequence 
$(\om_{n})_{n\in\N}\subset\reals\sm(0)$ with $\om_{n}\neq\om_{m}$ for 
$n\neq m$, $\om_{n}\To\widetilde{\om}$ and a corresponding sequence 
$(\solu_{n})_{n\in\N}$ with $\solu_{n}\in\gk{\,\calM-\om_{n}\,}\sm(0)$. As 
$\calM$ is selfadjoint, eigenvectors associated to different eigenvalues are 
orthogonal provided they are well enough integrable (\,which is given by 
$(\mathrm{1})$\,) and thus by normalizing $(\solu_{n})_{n\in\N}$ we end up with 
an $\ltLa$-orthonormal sequence. Continuing as in $(\mathrm{2})$, we again obtain 
a contradiction.\\[8pt] 
$(\mathrm{4})$: First of all, if a solution $\solu$ satisfies 
\eqref{equ:bvp_fred-alt-cond2}, it is uniquely determined as for the 
homogeneous problem $\solu\in\gk{\,\calM-\om\,}$ together with (1) and 
\eqref{equ:bvp_fred-alt-cond2} implies $\solu=0$. Moreover, using Lemma 
\ref{lem:prel_relationships} and $(\mathrm{1})$, we obtain
\begin{align*}
	\scpltLaom{\rhs}{v}
	 =\scpltLaom{\big(\,\M-\om\,)\,\solu}{v}
	 =\scpltLaom{u}{\big(\,\M-\om\,)\,v}=0
	 \qquad\;\forall\;v\in\gk{\,\calM-\om\,}\,,		
\end{align*}
meaning \eqref{equ:bvp_fred-alt-cond1} is necessary. In order to show, that 
\eqref{equ:bvp_fred-alt-cond1} is also sufficient, we use Eidus' principle of 
limiting absorption. Therefore let $s>1/2$ and $\rhs\in\ltsom$ satisfy 
\eqref{equ:bvp_fred-alt-cond1}. We take a sequence 
$(\sigma_{n})_{n\in\N}\subset\reals_{+}$ with $\sigma_{n}\To 0$ and construct 
a sequence of frequencies 
\begin{align*}
	(\om_{n})_{n\in\N},\qquad\om_{n}
	 :=\sqrt{\om^{2}+i\sigma_{n}\om}\in\C_{+}\sm\reals\,,	
\end{align*}
converging to $\om$. Since $\calM$ is a selfadjoint operator we obtain 
(\,cf. Section \ref{sec:th-bvp}\,) a corresponding sequence of solutions 
$(\solu_{n})_{n\in\N}$, 
$\solu_{n}:=\mathcal{L}_{\om_{n}}\rhs\in\Rztom\times\Rznom$ 
satisfying $\big(\,\M-\om_{n}\,\big)\,\solu_{n}=\rhs$. Now our aim is to show 
that this sequence or at least a subsequence is converging to a solution 
$\solu$. By Lemma \ref{lem:main-result_helm-decomp} we decompose 
\begin{align*}
	\solu_{n}=\hsolu_{n}+\tsolu_{n}
	\qquad &\text{and}\qquad
	\rhs=\hrhs+\trhs\,,
\end{align*}
with
\begin{align}\label{equ:proof_two-equ}
\begin{split}
	\hsolu_{n},\hrhs&\in\dsp\ovl{\dsp\nabla\Hoztom}\times\ovl{\dsp\nabla\Hoznom}
	\subset\zrztom\times\zrznom\,,\\
	\tsolu_{n},\trhs &\in\Big(\,\Rztom\cap\eps^{-1}\zdznom\,\Big)
	\times\Big(\,\Rznom\cap\mu^{-1}\zdztom\,\Big)\,.
\end{split}	
\end{align}
Inserting these (orthogonal) decompositions in the differential equation we end 
up with two equations
\begin{align*}
	-\om_{n}\hsolu_{n}=\hrhs
	\qquad &\text{and}\qquad
	\big(\,\M-\om_{n}\,\big)\,\tsolu_{n}=\trhs\,,	
\end{align*}
noting that the first one is trivial and implies $\lt$-convergence of 
$(\hsolu_{n})_{n\in\N}$. For dealing with the second equation we need the 
following additional assumption on $(\solu_{n})_{n\in\N}$, which we will prove 
in the end:\\[-9pt] 
\begin{align}\label{equ:proof_proof-cond}
	\forall\;t<-1/2\;\;\,\exists\;c\in(0,\infty)\;\;\,\forall\;n\in\N:
	\quad
	\normlttom{\solu_{n}}\leq c
\end{align}
Let $\that<-1/2$ and $c\in(0,\infty)$ such that \eqref{equ:proof_proof-cond} 
holds. Then, by construction and \eqref{equ:proof_two-equ}$_{2}$, the sequence 
$(\tsolu_{n})_{n\in\N}$ is bounded in 
$\big(\,\Rthattom\cap\eps^{-1}\zdthatnom\,\big)
\times\big(\,\Rthatnom\cap\mu^{-1}\zdthattom\,\big)$. Hence 
(\,Theorem \ref{theo:bvp_WST} and Lemma \ref{lem:bvp_WST}\,), 
$(\tsolu_{n})_{n\in\N}$ has a subsequence $(\tsolu_{\pi(n)})_{n\in\N}$ 
converging in $\ltttilom$ for some $\ttil<\that$ and by 
the equation even in $\Rttiltom\times\Rttilnom$. 
Consequently, the entire sequence $(\solu_{\pi(n)})_{n\in\N}$ converges in 
$\Rttilom$ to some $\solu$ satisfying
\begin{align*}
	\solu\in\Rttiltom\times\Rttilnom
	\qquad\text{and}\qquad
	\big(\,\M-\om\,\big)\,\solu=\rhs\,.
\end{align*}
Additionally, with Corollary \ref{cor:pol-est_polynomial-decay-es} and Lemma 
\ref{lem:prel_relationships} we obtain for $n\in\N$ and arbitrary 
$v\in\gk{\,\calM-\om\,}$
\begin{align*}
	0=\scpltLaom{\rhs}{v}
	 &=\scpltLaom{\big(\,\M-\om_{\pi(n)}\,\big)\,\solu_{\pi(n)}}{v}\\
	 &=\scpltLaom{\solu_{\pi(n)}}{\big(\,\M-\ovl{\om}_{\pi(n)}\,\big)\,v}
	  =\big(\,\om-\om_{\pi(n)}\,\big)\cdot\scpltLaom{\solu_{\pi(n)}}{v}\,.
\end{align*}
Hence $\scpltLaom{\solu_{\pi(n)}}{v}=0$ and, as $\scpltLaom{\cdot}{v}$ is 
continuous on $\ltttilom\times\ltttilom$ by (1), we obtain 
\begin{align*}
	\scpltLaom{\solu}{v}
	 =\lim_{n\rightarrow\infty}\scpltLaom{\solu_{\pi(n)}}{v}=0\,.
\end{align*}
Thus, up to now, we have constructed a vector field 
$\solu\in\gk{\,\calM-\om\,}^{\perp_{\La}}$, 
which has the right boundary conditions and satisfies the differential equation. 
But for being a radiating solution, it still remains to show, that 
$\solu\in\Rsmom{-\frac{1}{2}}$ and enjoys the radiation 
condition \eqref{equ:bvp_rad-sol-cond2}. For that let $t<-1/2$. Then, by Lemma 
\ref{lem:pol-est_a-priori}, there exist $c,\da\in(0,\infty)$ and some 
$\check{t}>-1/2$, such that for $n\in\N$ large enough we obtain uniformly in
$\sigma_{\pi(n)}$, $\solu_{\pi(n)}$, $f$ and $\rtil>0$:
\begin{align*}
	\norm{\solu_{\pi(n)}}_{\Rt(\Omrtil)}	
	 +\norm{\big(\hspace*{0.02cm}\Laz+\sqrt{\eps_{0}\mu_{0}}\;\Xi\,\big)
	 \hspace*{0.02cm}\solu_{\pi(n)}}_{\lsymb^{2}_{\mathrm{\check{t}}}(\Omrtil)}
	 \leq c\cdot\Big(\normltsom{\rhs}+\normltomda{\solu_{\pi(n)}}\Big)\,.
\end{align*}
Sending $n\To\infty$ and afterwards $\rtil\To\infty$ (\,monotone convergence\,) 
we obtain 
\begin{align}\label{equ:main-res_a-priori-final}
	\normRtom{\solu}	
	 +\norm{\big(\hspace*{0.02cm}\Laz+\sqrt{\eps_{0}\mu_{0}}\;\Xi\,\big)
	 \hspace*{0.02cm}\solu}_{\lsymb^{2}_{\mathrm{\check{t}}}(\Om)}
	 \leq c\cdot\Big(\normltsom{\rhs}+\normltomda{\solu}\Big)<\infty\,,
\end{align}
yielding 
\begin{align*}
	u\in\Rsmom{-\frac{1}{2}}
	\qquad\text{and}\qquad
	\big(\hspace*{0.02cm}\Laz+\sqrt{\eps_{0}\mu_{0}}\;\Xi\,\big)
	\hspace*{0.02cm}\solu\in\ltbigom{-\frac{1}{2}}\,.
\end{align*}
This completes the proof of existence, if we can show 
\eqref{equ:proof_proof-cond}. To this end we assume it to be wrong, i.e., 
there exists $t<-1/2$ and a sequence 
$(\solu_{n})_{n\in\N}\subset\Rttom\times\Rtnom$, 
$\solu_{n}:=\mathcal{L}_{\om_{n}}f$ with 
$\normlttom{\solu_{n}}\To\infty$ for $n\To\infty$. Defining 
\begin{align*}
	\csolu_{n}:=\normlttom{\solu_{n}}^{-1}\cdot\solu_{n}
	\qquad\text{and}\qquad	
	\crhs_{n}:=\normlttom{\solu_{n}}^{-1}\cdot\rhs\,,
\end{align*}
we have
\begin{align*}
	\normlttom{\csolu_{n}}=1\,,
	\qquad
	\crhs\To 0\;\text{ in }\,\ltsom
	\qquad\text{and}\qquad \big(\,\M-\om_{n}\,\big)\,\csolu_{n}=\crhs_{n}\,.
\end{align*}
Then, repeating the arguments from above, we obtain some $\check{t}<t$ and 
a subsequence $(\csolu_{\pi(n)})_{n\in\N}$ converging in 
$\lsymb^{2}_{\mathrm{\check{t}}}(\Om)$ to some 
$\csolu\in\gk{\,\calM-\om\,}\cap\gk{\,\calM-\om\,}^{\perp_{\La}}$, hence 
$\csolu=0$. But Lemma \ref{lem:pol-est_a-priori} ensures the existence of 
$c,\da\in(0,\infty)$ (\,independent of $\sigma_{\pi(n)}$, $\csolu_{\pi(n)}$ 
and $\crhs_{\pi(n)}$\,) such that
\begin{align*}
	1=\normlttom{\csolu_{\pi(n)}}
	 \leq c\cdot\Big(\,\big\|\,\crhs_{\pi(n)}\,\big\|_{\ltsom}
	 +\normltomda{\csolu_{\pi(n)}}\Big)
	 \xrightarrow{\;n\rightarrow\infty\;}0
\end{align*}
holds; a contradiction.\\[8pt]
$(\mathrm{5})$: Let $-t,s>1/2$. By $(\mathrm{4})$ the solution operator 
\begin{align*}
	\map{\sol}
	 {\underbrace{\ltsom\cap\gk{\,\calM-\om\,}^{\perp_{\La}}}
	  _{=:\dod(\sol)}}
	 {\underbrace{\big(\,\Rttom\times\Rtnom\,\big)
	  \cap\gk{\,\calM-\om\,}^{\perp_{\La}}}_{=:\calR(\sol)}}	
\end{align*}
is well defined. Furthermore, due to the polynomial decay of eigensolutions, 
$\dod(\sol)$ is closed in $\ltsom$. Thus, the assertion follows from the closed 
graph theorem, if we can show that $\sol$ is closed. Therefore, take 
$(\rhs_{n})_{n\in\N}\subset\dod(\sol)$ with
\begin{align*}
	\rhs_{n}\To\rhs	\;\text{ in }\,\ltsom
	\qquad\text{and}\qquad
	\solu_{n}:=\sol\rhs_{n}\To u\;\text{ in }\,\Rttom\times\Rtnom\,.
\end{align*}
Then clearly $\rhs\in\dod(\sol)$, $u\in\calR(\sol)$ and as 
$\big(\,\M-\om\,\big)\,\solu_{n}=\rhs_{n}$, we obtain 
$\big(\,\M-\om\,\big)\,\solu=\rhs$. Now estimate 
\eqref{equ:main-res_a-priori-final} (\,along with monotone convergence\,) 
shows as before 
\begin{align*}
	u\in\Rsmom{-\frac{1}{2}}
	\qquad\text{and}\qquad
	\big(\,\Laz+\sqrt{\eps_{0}\mu_{0}}\;\Xi\;\big)\hspace*{0.03cm}
	&\solu\in\ltbigom{-\frac{1}{2}}\,,
\end{align*}
meaning $\solu$ is a radiating solution, i.e., $\solu=\sol\rhs$, which 
completes the proof.\\[-10pt]
\end{proof}
\begin{rem}
	During the discussion at AANMPDE10 (\,10th Workshop on Analysis and Advanced 
	Numerical Methods for Partial Differential Equations\,), M. Waurick and 
	S. Trostorff pointed out, that it is sufficient to use weakly convergent 
	subsequences for the construction of the (\,radiating\,) solution. This is 
	in fact true (\,the radiation condition and regularity properties follow 
	from Lemma \ref{lem:pol-est_a-priori} by the boundedness of the sequence 
	and the weak lower semicontinuity of the norms\,), but it should be noted, 
	that Weck's local selection theorem is still needed to prove
	\eqref{equ:proof_proof-cond}, since here norm convergence is indispensable in 
	order to generate a contradiction. Anyway, we thank both for the vivid 
	discussion and constructive criticism.
\end{rem}
%
%
%
%
\bibliographystyle{plain} 
\bibliography{time-harmonic-biblio}

\begin{thebibliography}{10}

\bibitem{bauer_maxwell_2016}
S.~Bauer, D.~Pauly, and M.~Schomburg.
\newblock The {Maxwell} {Compactness} {Property} in {Bounded} {Weak}
  {Lipschitz} {Domains} with {Mixed} {Boundary} {Conditions}.
\newblock {\em SIAM J. Math. Anal.}, 48(4):2912--2943, 2016.

\bibitem{costabel_remark_1990}
M.~Costabel.
\newblock A remark on the regularity of solutions of {M}axwell's equations on
  {L}ipschitz domains.
\newblock {\em Math. Methods Appl. Sci.}, 12(4):365--368, 1990.

\bibitem{dautray_mathematical_2000}
R.~Dautray and J.-L. Lions.
\newblock {\em Mathematical {Analysis} and {Numerical} {Methods} for {Science}
  and {Technology}: {Volume} 2 - {Functional} and {Variational} {Methods}}.
\newblock Springer-Verlag, 2000.

\bibitem{eidus_principle_1965}
D.~M. Eidus.
\newblock The principle of limiting absorption.
\newblock {\em Amer. Math. Soc. Transl. Ser. 2}, 47:157--191, 1965.

\bibitem{fernandes_magnetostatic_1997}
P.~Fernandes and G.~Gilardi.
\newblock {M}agnetostatic and {E}lectrostatic {P}roblems in {I}nhomogeneous
  {A}nisotropic {M}edia with {I}rregular {B}oundary and {M}ixed {B}oundary
  {C}onditions.
\newblock {\em Math. Models Methods Appl. Sci.}, 07(7):957--991, 1997.

\bibitem{ikebe_limiting_1972}
T.~Ikebe and Y.~Saito.
\newblock Limiting absorption method and absolute continuity for the
  {Schr\"odinger} operator.
\newblock {\em J. Math. Kyoto Univ.}, 12(3):513--542, 1972.

\bibitem{jochmann_compactness_1997}
F.~Jochmann.
\newblock A compactness result for vector fields with divergence and curl in
  {$L^2(\Omega)$} involving mixed boundary conditions.
\newblock {\em Appl. Anal.}, 66(1):189--203, 1997.

\bibitem{kuhn_regularity_2010}
P.~Kuhn and D.~Pauly.
\newblock Regularity results for generalized electro-magnetic problems.
\newblock {\em Analysis}, 30(3):225--252, 2010.

\bibitem{leis_zur_1968}
R.~Leis.
\newblock Zur {Theorie} elektromagnetischer {Schwingungen} in anisotropen
  inhom. {Medien}.
\newblock {\em Math. Z.}, 106:213--224, 1968.

\bibitem{leis_aussenraumaufgaben_1974}
R.~Leis.
\newblock {Aussenraumaufgaben} in der {Theorie} der {Maxwellschen}
  {Gleichungen}.
\newblock In {\em Topics in Analysis}, pages 237--247. Springer, Berlin,
  Heidelberg, 1974.

\bibitem{leis_initial_2013}
R.~Leis.
\newblock {\em Initial {Boundary} {Value} {Problems} in {Mathematical}
  {Physics}}.
\newblock Courier Corporation, 2013.

\bibitem{muller_randwertprobleme_1952}
C.~M\"uller.
\newblock Randwertprobleme der {Theorie} elektromagnetischer {Schwingungen}.
\newblock {\em Math. Z.}, 56(3):261--270, 1952.

\bibitem{muller_behavior_1954}
C.~M\"uller.
\newblock On the behavior of the solutions of the differential equation
  ${\Delta} u=f(x,u)$ in the neighborhood of a point.
\newblock {\em Comm. Pure Appl. Math.}, 7(3):505--515, 1954.

\bibitem{pauly_niederfrequenzasymptotik_2003}
D.~Pauly.
\newblock {\em Niederfrequenzasymptotik der {Maxwell}-{Gleichung} im
  inhomogenen und anisotropen {Au{\ss}engebiet}}.
\newblock Dissertation, Universit{\"a}t Duisburg-Essen, Fakult{\"a}t f{\"u}r
  Mathematik, 2003.

\bibitem{pauly_low_2006}
D.~Pauly.
\newblock Low frequency asymptotics for time-harmonic generalized {M}axwell
  equations in nonsmooth exterior domains.
\newblock {\em Adv. Math. Sci. Appl}, 16(2):591--622, 2006.

\bibitem{pauly_polynomial_2012}
D.~Pauly.
\newblock On polynomial and exponential decay of eigen-solutions to exterior
  boundary value problems for the generalized time-harmonic {Maxwell} system.
\newblock {\em Asymptot. Anal.}, 79(1):133--160, 2012.

\bibitem{pauly_solution_2016}
D.~Pauly.
\newblock Solution theory, variational formulations, and functional a
  posteriori error estimates for general first order systems with applications
  to electro-magneto statics and more.
\newblock {\em Numer. Funct. Anal. Optim.}, https://arxiv.org/abs/1611.02993,
  2019.

\bibitem{picard_wellenausbreitung_1982}
R.~Picard.
\newblock Ein vereinheitlichter {Zugang} f\"ur eine {Klasse} linearer
  {Wellenausbreitungs-Ph\"anomene}.
\newblock Technical Report 489, SFB 72, Universit\"at Bonn, 1982.

\bibitem{picard_elementary_1984}
R.~Picard.
\newblock An {Elementary} {Proof} for a {Compact} {Imbedding} {Result} in
  {Generalized} {Electromagnetic} {Theory}.
\newblock {\em Math. Z.}, 187:151--164, 1984.

\bibitem{picard_time-harmonic_2001}
R.~Picard, N.~Weck, and K.{\,}J. Witsch.
\newblock {Time-Harmonic} {Maxwell} {Equations} in the {Exterior} of
  {Perfectly} {Conducting}, {Irregular} {Obstacles}.
\newblock {\em Analysis (Munich)}, 21(3):231--264, 2001.

\bibitem{vogelsang_absolute_1982}
V.~Vogelsang.
\newblock Die absolute {Stetigkeit} des positiven {Spektrums} der
  {Schwingungsgleichungen} mit oszillierendem {H}auptteil.
\newblock {\em Math. Z.}, 181:201--214, 1982.

\bibitem{weber_local_1980}
C.~Weber.
\newblock A local compactness theorem for {Maxwell}'s equations.
\newblock {\em Math. Meth. Appl. Sci.}, 2(1):12--25, 1980.

\bibitem{weber_regularity_1981}
C.~Weber.
\newblock Regularity theorems for {Maxwell}'s equations.
\newblock {\em Math. Meth. Appl. Sci.}, 3(1):523--536, 1981.

\bibitem{weck_maxwells_1974}
N.~Weck.
\newblock {Maxwell's} boundary value problem on {Riemannian} manifolds with
  nonsmooth boundaries.
\newblock {\em J. Math. Anal. Appl.}, 46(2):410--437, 1974.

\bibitem{weck_complete_1992}
N.~Weck and K.{\,}J. Witsch.
\newblock Complete low frequency analysis for the reduced wave equation with
  variable coefficients in three dimensions.
\newblock {\em Comm. Partial Differential Equations}, 17(9):1619--1663, 1992.

\bibitem{weck_generalized_1994}
N.~Weck and K.{\,}J. Witsch.
\newblock Generalized spherical harmonics and exterior differentiation in
  weighted {Sobolev} spaces.
\newblock {\em Math. Methods Appl. Sci.}, 17(13):1017--1043, 1994.

\bibitem{weck_generalized1_1997}
N.~Weck and K.{\,}J. Witsch.
\newblock Generalized linear elasticity in exterior domains. {I}: {Radiation}
  problems.
\newblock {\em Math. Methods Appl. Sci.}, 20(17):1469--1500, 1997.

\bibitem{werner_randwertprobleme_1965}
P.~Werner.
\newblock {Randwertprobleme} f\"ur die zeitunabh\"angigen {Maxwellschen}
  {Gleichungen} mit variablen {Koeffizienten}.
\newblock {\em Arch. Rational Mech. Anal.}, 18(3):167--195, 1965.

\bibitem{weyl_naturlichen_1952}
H.~Weyl.
\newblock Die nat\"urlichen {Randwertaufgaben} im {Au{\ss}enraum} f\"ur
  {Strahlungsfelder} beliebiger {Dimension} und beliebigen {Ranges}.
\newblock {\em Math. Z.}, 56(2):105--119, 1952.

\bibitem{wloka_partial_1987}
J.~Wloka.
\newblock {\em Partial Differential Equations}.
\newblock Cambridge University Press, 1987.

\end{thebibliography}
%
%
%
%
\appendix 
%
%
\section{Technical Tools}
\label{sec:tech-tools}
%
%
\begin{lem}\label{lem:app-tech_weighted-est}
	Let $\Om\subset\rthree$ be an arbitrary exterior domain and 
	$s,\,t,\,\theta\in\reals$ with $t<s$ 
	and $\theta>0$. Then there exist constants $c,\da\in(0,\infty)$ such that
	\begin{align*}
		\normlttom{w}\leq c\cdot\normltomda{w}+\theta\cdot\normltsom{w}	
	\end{align*}
	holds for all $w\in\ltsom$.
\end{lem}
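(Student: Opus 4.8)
The plan is to prove the estimate by splitting the weighted integral defining $\normlttom{w}^2$ at a radius $\da$ that will be chosen large depending on $\theta$. Writing $\rho=(1+r^2)^{1/2}$, I would start from
\begin{align*}
	\normlttom{w}^2
	=\int_{\Om}\rho^{2t}\,|w|^2\,dx
	=\int_{\Om\cap\U(\da)}\rho^{2t}\,|w|^2\,dx
	 +\int_{\Om\cap\cU(\da)}\rho^{2t}\,|w|^2\,dx
\end{align*}
and estimate the two pieces separately.

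For the bounded piece $\Om\cap\U(\da)$, the weight $\rho^{2t}$ is bounded there by $C_{\da}:=\max\{1,(1+\da^2)^{t}\}$, a constant depending only on $t$ and $\da$ (the two cases $t\ge0$ and $t<0$ handle the monotonicity of $\rho^{2t}$). Hence this piece is controlled by the \emph{unweighted} local norm, $\int_{\Om\cap\U(\da)}\rho^{2t}|w|^2\,dx\le C_{\da}\,\normltomda{w}^2$, since by definition $\ltomda=\lt(\Om(\da))$ with $\Om(\da)=\Om\cap\U(\da)$. For the exterior piece, the hypothesis $t<s$ is used: on $\{r\ge\da\}$ we have $\rho^2\ge1+\da^2$, so factoring $\rho^{2t}=\rho^{2s}\rho^{2(t-s)}$ and using $t-s<0$ gives $\rho^{2t}\le(1+\da^2)^{t-s}\rho^{2s}$. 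Therefore $\int_{\Om\cap\cU(\da)}\rho^{2t}|w|^2\,dx\le(1+\da^2)^{t-s}\normltsom{w}^2$.

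Combining the two bounds and applying subadditivity of the square root yields $\normlttom{w}\le\sqrt{C_{\da}}\,\normltomda{w}+(1+\da^2)^{(t-s)/2}\normltsom{w}$. Since $t-s<0$, the tail factor $(1+\da^2)^{(t-s)/2}$ tends to $0$ as $\da\to\infty$; I would fix $\da$ so large that $(1+\da^2)^{(t-s)/2}\le\theta$, and then set $c:=\sqrt{C_{\da}}$. The estimate for every $w\in\ltsom$ follows, the right-hand side being finite precisely because $w\in\ltsom$. There is no genuine analytic obstacle here; the only point to keep straight is the order in which the constants are produced — $\da$ is chosen first (as a function of $\theta,s,t$) and only afterwards does $c=\sqrt{C_{\da}}$ depend on this fixed $\da$, which is exactly the quantifier order asserted in the statement.
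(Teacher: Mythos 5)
Your proof is correct and follows essentially the same route as the paper's own argument: split $\normlttom{w}^{2}$ at a large radius, bound the weight by $(1+\da^{2})^{\max\{0,t\}}$ on the bounded part and by $(1+\da^{2})^{t-s}\rho^{2s}$ on the exterior part, then choose $\da$ so large that the tail factor is at most $\theta$ (the paper does this with squared norms and the condition $(1+\rtil^{2})^{t-s}\leq\theta^{2}$, which is the same thing). Your remark on the quantifier order, with $\da$ fixed first and $c$ depending on it, matches the paper's choice of constants exactly.
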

\begin{proof}
Let $\rthree\sm\Om\subset\U(r_{0})$. For $\rtil\geq r_{0}$ we obtain
\begin{align*}
	\normlttom{w}^{2}
		=\norm{w}^{2}_{\ltt(\Omrtil)}+\norm{w}^{2}_{\ltt(\cU(\rtil))}	
		&\leq \big(\hspace*{0.02cm}1+\rtil^{2}\hspace*{0.02cm}\big)^{\max\{0,t\}}
		 \cdot\norm{w}^{2}_{\lt(\Omrtil)}
		 +\big(\hspace*{0.02cm}1+\rtil^{2}\hspace*{0.02cm}\big)^{t-s}
		 \cdot\norm{w}^{2}_{\lts(\cU(\rtil))}\\
		&\leq \big(\hspace*{0.02cm}1+\rtil^{2}\hspace*{0.02cm}\big)^{\max\{0,t\}}
		 \cdot\norm{w}^{2}_{\lt(\Omrtil)}
	     +\big(\hspace*{0.02cm}1+\rtil^{2}\hspace*{0.02cm}\big)^{t-s}
		 \cdot\normltsom{w}.
\end{align*}
Since $t<s$ we can choose $\rtil$ such that 
$\big(\hspace*{0.02cm}1+\rtil^{2}\hspace*{0.02cm}\big)^{t-s}\leq \theta^{2}$,
which completes the proof.
\end{proof}
\begin{lem}\label{lem:app-tech_liminf}
For $\rtil>0$ and $f\in\lo(\rn)$ it holds
\begin{align*}
	\liminf_{r\rightarrow\infty}\;
	r\hspace*{-0.04cm}\int_{\Sp(r)}|\,f\hspace*{0.5mm}|\;\dlanmo=0\,.
\end{align*}
\end{lem}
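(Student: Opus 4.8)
The plan is to reduce the statement to an elementary fact about integrable functions of one variable via polar coordinates. First I would introduce the slice function
\begin{align*}
	g(r):=\int_{\Sp(r)}|\,f\,|\;\dlanmo\,,\qquad r>0\,,
\end{align*}
and observe that, by passing to polar coordinates (equivalently, by the coarea formula applied to the map $x\mapsto|\,x\,|$) and invoking Tonelli's theorem for the nonnegative measurable integrand $|\,f\,|$, the slice $g$ is measurable and satisfies
\begin{align*}
	\int_{0}^{\infty}g(r)\,dr=\int_{\rn}|\,f\,|\;\dlan=\normlorn{f}<\infty\,.
\end{align*}
Hence $g\in\lo\big((0,\infty)\big)$. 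Note that the parameter $\rtil>0$ in the statement plays no role here, since the limit inferior is taken as $r\To\infty$ and the integral extends over all of $\rn$.

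Next I would argue by contradiction. Since $r\,g(r)\geq 0$, the quantity $L:=\liminf_{r\rightarrow\infty}r\,g(r)$ is a well-defined element of $[0,\infty]$, and it suffices to exclude $L>0$. Assuming $L>0$, the definition of the limit inferior (with $\varepsilon:=L/2$) provides some $R>0$ such that
\begin{align*}
	g(r)\geq\frac{L}{2\,r}\qquad\text{for all }r\geq R\,.
\end{align*}
Integrating this lower bound over $[R,\infty)$ would then yield
\begin{align*}
	\int_{R}^{\infty}g(r)\,dr\geq\frac{L}{2}\int_{R}^{\infty}\frac{dr}{r}=\infty\,,
\end{align*}
contradicting $g\in\lo\big((0,\infty)\big)$ established above. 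Therefore $L=0$, which is exactly the assertion.

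Since the whole argument is elementary, I do not expect any genuine obstacle. The single point requiring care is the justification of the polar-coordinate splitting that turns the $\rn$-integral into an iterated integral of the spherical slices; but because $|\,f\,|$ is nonnegative and measurable, Tonelli's theorem applies directly and guarantees both the measurability of $g$ and the identity $\int_{0}^{\infty}g=\normlorn{f}$, so this reduces to a routine measure-theoretic step rather than a real difficulty.
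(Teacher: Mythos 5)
Your proof is correct and takes essentially the same route as the paper's: a contradiction argument combining the polar-coordinate (Fubini/Tonelli) splitting of $\normlorn{f}$ with the divergence of $\int_{R}^{\infty}r^{-1}\,dr$. The only cosmetic point is that your choice $\varepsilon:=L/2$ tacitly assumes $L<\infty$; when $L=\infty$ one simply bounds $r\,g(r)$ below by any fixed positive constant, which is exactly how the paper's phrasing (``there exists $c>0$ such that\ldots'') covers both cases at once.
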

\begin{proof}
	Otherwise there exists $\rhat>0$ and $c>0$ such that
	\begin{align*}
		\int_{\Sp(r)}|\,f\hspace*{0.5mm}|\;\dlanmo\geq\frac{c}{r}
		\qquad\forall\;r\geq\rhat 
	\end{align*}
	and using Fubini's theorem we obtain
	\begin{align*}
		\norm{f}_{\lo(\rn)}^{2}\geq\int_{\cU(\rhat)}|\,f\hspace*{0.5mm}|\;\dlan
		=\int_{\rhat}^{\infty}\int_{\Sp(r)}|\,f\hspace*{0.5mm}|\;\dlanmo\;dr
		\geq c\cdot\int_{\rhat}^{\infty}\frac{1}{r}\;dr=\infty\,,
	\end{align*}
	a contradiction.
\end{proof}
\begin{lem}\label{lem:app-tech_part-Int}
	Let $\Om\subset\rthree$ be an exterior weak Lipschitz domain with boundary 
	$\Ga$ and weak Lipschitz boundary parts $\Ga_{1}$ and 
	$\Ga_{2}=\Ga\sm\ovl{\Ga}_{1}$. Furthermore, let 
	$\rhat,\rtil\in\reals_{+}$ with 
	$\rtil>\rhat$ and\;\,$\rthree\sm\Om\subset\U(\rhat)$ as well as 
	$\phi\in\sfC^{0}\big(\hspace*{0.03cm}[\hspace*{0.03cm}\rhat,
	\rtil\hspace*{0.03cm}]
	\hspace*{0.05cm},\hspace*{-0.01cm}\C\hspace*{0.03cm}\big)$. If 
	$\solu\in\Rttom\times\Rtnom$ for some $t\in\reals$, it holds
	\begin{align}\label{equ:app-tech_part-Int}
         \scp{\Phi\hspace*{0.03cm}\Xi\hspace*{0.03cm}\solu}{\Laz\solu}_{\lt(\G(\rhat,\rtil))}
        =\scp{\Psi\hspace*{0.03cm}\Rot\solu}{\Laz\solu}_{\lt(\Omrtil)}
		+\scp{\Psi\solu}{\Rot\Laz\solu}_{\lt(\Omrtil)}\,,
	\end{align}
	where $\Phi:=\phi\circ r$, $\Psi:=\psi\circ r$, and 
	\begin{align*}
		\maps{\psi}{[0,\rtil]}{\C}{\sigma}
		{\int_{\max\{\rhat,\sigma\}}^{\rtil}\phi(\tau)\,d\tau}.
	\end{align*}
\end{lem}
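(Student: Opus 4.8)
The plan is to recast the claimed identity as a single weighted integration-by-parts formula and then discharge it with the weak Green formulae of Lemma~\ref{lem:prel_relationships}. Write $\solu=(E,H)$, recall $\eps_{0},\mu_{0}\in\reals_{+}$, and put
\[
V:=\mu_{0}\,(E\times\ovl{H})-\eps_{0}\,(H\times\ovl{E}).
\]
The decisive structural feature is the design of $\psi$: one has $\psi'=-\phi$ on $(\rhat,\rtil)$, $\psi'\equiv0$ on $[0,\rhat)$ and $\psi(\rtil)=0$, so the Lipschitz scalar $\Psi=\psi\circ r$ obeys $\nabla\Psi=-\Phi\,\xi$ on $\G(\rhat,\rtil)$, $\nabla\Psi=0$ on $\Om\cap\U(\rhat)$, and $\Psi$ is supported in $\ovl{\U(\rtil)}$ with $\Psi=0$ on $\Sp(\rtil)$. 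Using the scalar triple product identities $(\xi\times E)\cdot\ovl{H}=\xi\cdot(E\times\ovl{H})$ and $(\xi\times H)\cdot\ovl{E}=\xi\cdot(H\times\ovl{E})$ together with $\Phi\,\xi=-\nabla\Psi$ on $\G(\rhat,\rtil)$, I would first rewrite the left-hand side as
\[
\scp{\Phi\,\Xi\,\solu}{\Laz\solu}_{\lt(\G(\rhat,\rtil))}
=\int_{\G(\rhat,\rtil)}\Phi\,\xi\cdot V
=-\int_{\Omrtil}\nabla\Psi\cdot V,
\]
where the last step only uses $\nabla\Psi=0$ on $\Om\cap\U(\rhat)$ to enlarge the domain of integration.

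It then suffices to prove the two weighted identities
\[
\int_{\Om}\nabla\Psi\cdot(E\times\ovl{H})=-\int_{\Om}\Psi\,\big(\rot E\cdot\ovl{H}-E\cdot\ovl{\rot H}\big),
\]
together with the companion obtained by interchanging $E$ and $H$. For the first I would combine $\nabla\Psi\cdot(E\times\ovl{H})=(\nabla\Psi\times E)\cdot\ovl{H}$ with the product rule $\nabla\Psi\times E=\rot(\Psi E)-\Psi\rot E$, valid for the Lipschitz multiplier $\Psi$; this collapses the claim to
\[
\int_{\Om}\big(\rot(\Psi E)\cdot\ovl{H}-(\Psi E)\cdot\ovl{\rot H}\big)=0.
\]
Here $\Psi E$ has bounded support in $\ovl{\Omrtil}$ and inherits the tangential condition of $E$ on $\Ga_{1}$, hence $\Psi E\in\Rgen{}{}{s,\Gat}(\Om)$ for every $s$ (bounded support renders all polynomial weights equivalent), while $H\in\Rtnom$. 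Since $\Ga_{1}$ and $\Ga_{2}$ are complementary, Lemma~\ref{lem:prel_relationships} (the identity of weak and strong boundary conditions, with the test class replaced by $\Rgen{}{}{s,\Gat}(\Om)$ and $s=-t$ so that $s+t\geq0$) yields $\scpltom{H}{\rot(\Psi E)}=\scpltom{\rot H}{\Psi E}$, which upon complex conjugation is exactly the displayed vanishing. The companion identity follows symmetrically, testing $E\in\Rttom$ against $\Psi H\in\Rgen{}{}{s,\Gan}(\Om)$.

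Finally I would assemble the pieces: multiplying the first identity by $\mu_{0}$ and the companion by $-\eps_{0}$ and adding expresses $-\int_{\Omrtil}\nabla\Psi\cdot V$ through $\Psi$-weighted rotation terms, and expanding $\Rot\solu=(-\rot H,\rot E)$ and $\Rot\Laz\solu=(-\mu_{0}\rot H,\eps_{0}\rot E)$ shows that this sum equals $\scp{\Psi\Rot\solu}{\Laz\solu}_{\lt(\Omrtil)}+\scp{\Psi\solu}{\Rot\Laz\solu}_{\lt(\Omrtil)}$; this componentwise bookkeeping is routine. The main obstacle is not algebraic but the legitimacy of the integration by parts on a mere weak Lipschitz domain: the classical divergence theorem is unavailable, and the contributions on $\Ga$ must vanish through the boundary conditions $\nu\times E=0$ on $\Ga_{1}$ and $\nu\times H=0$ on $\Ga_{2}$. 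This is precisely what the weak Green formulae of Lemma~\ref{lem:prel_relationships} encode, once one checks that multiplication by the Lipschitz scalar $\Psi$ maps $\Rttom$ and $\Rtnom$ into $\Rgen{}{}{s,\Gat}(\Om)$ and $\Rgen{}{}{s,\Gan}(\Om)$; the potential contribution on the artificial sphere $\Sp(\rtil)$ disappears automatically, both because $\Psi=0$ there and, more conceptually, because $\Sp(\rtil)$ is not part of $\p\Om$, so the Green formulae are applied globally on $\Om$.
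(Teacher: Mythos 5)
Your proof is correct, but it takes a genuinely different route from the paper's. The paper argues by density and classical calculus: since $\cictom\times\cicnom$ is dense in $\Rttom\times\Rtnom$ and both sides of \eqref{equ:app-tech_part-Int} are continuous sesquilinear expressions in $\solu$ over bounded sets, it suffices to treat smooth fields; the key trick is then that every term in the identity is a product of an $E$-type with an $H$-type quantity, and because the two factors carry complementary boundary conditions these products are supported at positive distance from \emph{all} of $\Ga$, so after a cut-off one may work with smooth, compactly supported fields on $\rthree$, apply Gauss's divergence theorem sphere by sphere using $\mu_{0}\div(E\times\ovl{H})-\eps_{0}\div(H\times\ovl{E})=\ovl{\Laz v}\cdot\Rot v+v\cdot\Rot\ovl{\Laz v}$, and recover the weight $\Psi$ via Fubini. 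You instead stay entirely on the weak level: the observations $\nabla\Psi=-\Phi\,\xi$ on $\G(\rhat,\rtil)$ and $\nabla\Psi=0$ on $\Om\cap\U(\rhat)$ turn the left-hand side into $-\int_{\Omrtil}\nabla\Psi\cdot V$ with $V:=\mu_{0}(E\times\ovl{H})-\eps_{0}(H\times\ovl{E})$, and the Lipschitz product rule $\rot(\Psi E)=\nabla\Psi\times E+\Psi\rot E$ reduces the whole identity to the two Green formulae $\scpltom{H}{\rot(\Psi E)}=\scpltom{\rot H}{\Psi E}$ and $\scpltom{E}{\rot(\Psi H)}=\scpltom{\rot E}{\Psi H}$, which Lemma \ref{lem:prel_relationships} supplies; I checked your conjugations and the componentwise reassembly, and they are consistent. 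What your route buys is brevity and conceptual transparency: no passage to smooth representatives of $\solu$, no sphere-slicing, no Fubini. What it costs is the auxiliary fact you flag but do not prove, namely that multiplication by the compactly supported Lipschitz scalar $\Psi$ maps $\Rttom$ into $\Rstom$ and $\Rtnom$ into $\Rsnom$, i.e.\ preserves the generalized boundary conditions; this is not contained in Lemma \ref{lem:prel_relationships} and needs its own short smoothing argument (take $\varphi_{n}\in\cictom$ with $\varphi_{n}\to E$ in $\Rtom$, mollify $\Psi$ to $\Psi_{\delta}\in\cicrthree$, note $\Psi_{\delta}\varphi_{n}\in\cictom$ and $\Psi_{\delta}\varphi_{n}\to\Psi\varphi_{n}\to\Psi E$ in $\Rsom$). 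So the approximation-by-smooth-fields step of the paper's proof does not vanish in your argument; it migrates into this multiplier lemma, and with that lemma made explicit your proof is complete.
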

\begin{proof}
	As $\cictom$ respectively $\cicnom$ is dense in $\Rttom$ respectively $\Rtnom$ by definition 
	it is enough to show equation \eqref{equ:app-tech_part-Int} for 
	$\solu=(\solu_{1},\solu_{2})\in\cictom\times\cicnom\subset\cicrthree$. 
	Observing that the support of products of $\solu_{1}$ and $\solu_{2}$
	is compactly supported in some $\Theta\subset\ovl{\Theta}\subset\Om$, 
	we may choose a cut-off function $\varphi\in\cicom\subset\cicrthree$
	with $\varphi|_{\Theta}=1$ and replace $\solu$ by $\varphi\solu=:v=:(E,H)$.
	Without loss of generality we assume $\rthree\sm\Theta\subset\U(\rhat)$.
	Using Gauss's divergence theorem we compute
	\begin{align*}
	 	\scp{\Phi\hspace*{0.03cm}\Xi\hspace*{0.03cm}\solu}{\Laz\solu}
	 	 _{\lt(\G(\rhat,\rtil))}
	 	&=\int_{\rhat}^{\rtil}\phi(r)\,\scp{\Xi\hspace*{0.03cm}\solu}{\Laz\solu}
	   	 _{\lt(\Sp(r))}\;dr
	 	=\int_{\rhat}^{\rtil}\phi(r)\,\scp{\Xi\hspace*{0.03cm}v}{\Laz v}
	   	 _{\lt(\Sp(r))}\;dr\\
	 	&=\int_{\rhat}^{\rtil}\phi(r)\,\Big(\,\mu_{0}\scp{\xi\times E}{H}
	   	 _{\lt(\Sp(r))}-\eps_{0}\scp{\xi\times H}{E}_{\lt(\Sp(r))}\,\Big)\,dr\\
	 	&=\int_{\rhat}^{\rtil}\phi(r)\int_{\Sp(r)}\Big(\,\mu_{0}\,
	 	  \xi\cdot\big(E\times \ovl{H}\hspace*{0.03cm}\big)-\eps_{0}\,
	 	  \xi\cdot\big(H\times \ovl{E}\hspace*{0.03cm}\big)\,\Big)\,d\lambda_{s}^{2}\,dr\\
	 	&=\int_{\rhat}^{\rtil}\phi(r)\int_{\U(r)}\Big(\,\mu_{0}
	 	  \div\big(E\times \ovl{H}\hspace*{0.03cm}\big)-\eps_{0}
	 	  \div\big(H\times \ovl{E}\hspace*{0.03cm}\big)\,\Big)\,d\lambda^{3}\,dr\,.
	\end{align*}
	Note that
	\begin{align*}
		\mu_{0}\div\big(E\times\ovl{H}\hspace*{0.03cm}\big)
		 -\eps_{0}\div\big(H\times \ovl{E}\hspace*{0.03cm}\big)
		&=\mu_{0}\Big(\,\ovl{H}\rot E-E\rot\ovl{H}\,\Big)
		 -\eps_{0}\Big(\,\ovl{E}\rot H-H\rot\ovl{E}\,\Big)\\
		&=\Big(\big(\mu_{0}
		 \ovl{H}\big)\rot E
		 -\big(\eps_{0}\ovl{E}\big)\rot H\,\Big)
		 +\Big(\,H\rot\big(\eps_{0}\ovl{E}\big)
		 -E\rot\big(\mu_{0}\ovl{H}\big)\Big)\\
		&=\ovl{\Laz v}\cdot\Rot v+v\cdot\Rot\ovl{\Laz v}.
	\end{align*}
	Hence, by using Fubini's theorem, we see
	\begin{align*}
		\scp{\Phi\hspace*{0.03cm}\Xi\hspace*{0.03cm}\solu}{\Laz\solu}
	 	  _{\lt(\G(\rhat,\rtil))}
	 	&=\int_{\rhat}^{\rtil}\phi(r)\,\Big(\,\scp{\Rot v}
	 	  {\Laz v}_{\lt(\U(r))}
		  +\scp{ v}{\Rot\Laz v}_{\lt(\U(r))}\,\Big)\,dr\\
		&=\int_{\rhat}^{\rtil}\phi(r)\int_{0}^{r}
		  \Big(\,\scp{\Rot v}{\Laz v}_{\lt(\Sp(\sigma))}
		  +\scp{ v}{\Rot\Laz v}_{\lt(\Sp(\sigma))}\,\Big)\,d\sigma\,dr\\
		&=\int_{0}^{\rtil}\int_{\max\{\rhat,\sigma\}}^{\rtil}\phi(r)\,
		  \Big(\,\scp{\Rot v}{\Laz v}_{\lt(\Sp(\sigma))}
		  +\scp{ v}{\Rot\Laz v}_{\lt(\Sp(\sigma))}\,\Big)\,dr\,d\sigma\\
		&=\int_{0}^{\rtil}\psi(\sigma)\,
		  \Big(\,\scp{\Rot v}{\Laz v}_{\lt(\Sp(\sigma))}
		  +\scp{ v}{\Rot\Laz v}_{\lt(\Sp(\sigma))}\,\Big)\,d\sigma\\
		&=\scp{\Psi\hspace*{0.03cm}\Rot v}{\Laz v}_{\lt(\U(\rtil))}
		  +\scp{\Psi v}{\Rot\Laz v}_{\lt(\U(\rtil))}\\
		&=\scp{\Psi\hspace*{0.03cm}\Rot v}{\Laz v}_{\lt(\Omrtil)}
		  +\scp{\Psi v}{\Rot\Laz v}_{\lt(\Omrtil)}\\
		&=\scp{\Psi\hspace*{0.03cm}\Rot\solu}{\Laz\solu}_{\lt(\Omrtil)}
		  +\scp{\Psi\solu}{\Rot\Laz\solu}_{\lt(\Omrtil)}\,,
	\end{align*}
where the last line follows by construction of $v$.
\end{proof}
\noindent
We end this section with a Lemma, which will be needed to prove the polynomial 
decay and a-priori-estimate for the Helmholtz equation and can be shown by 
elementary partial 
integration.
\begin{lem}\label{lem:app-tech_Int-rules}
	Let $\ssolu\in\Htwoloc(\reals^{n})$, $0\notin\supp\ssolu$, $m\in\reals$ and 
	$\rtil>0$. Then
	\begin{enumerate}[label=$(\arabic*)$,itemsep=8pt]
		\item $\dsp\Re\int_{\U(\rtil)}
			   r^{m+1}\Delta\ssolu\hspace*{0.06cm}\pr\bssolu$
		\item[] $\dsp\;\quad=\frac{1}{2}\int_{\U(\rtil)}r^m
				\Big(\,(n+m-2)\,|\nabla\ssolu|^{2}-2m\,|\hspace*{0.03cm}\pr
				\ssolu\hspace*{0.04cm}|^{2}\,\Big)
				+\int_{\Sp(\rtil)}r^{m+1}\left(\,|\hspace*{0.03cm}\pr
				\ssolu\hspace*{0.04cm}|^{2}
				-\frac{1}{2}\,|\nabla\ssolu|^{2}\,\right)\,,$
		\item $\dsp\Re\int_{\U(\rtil)}r^{m}\Delta\ssolu\hspace*{0.06cm}\bssolu$
		\item[] $\dsp\;\quad=-\int_{\U(\rtil)}r^{m}\Big(\,|\nabla\ssolu|^{2}
				-\frac{m}{2}\,(n+m-2)\,r^{-2}\,|\ssolu|^{2}\,\Big)
				+\int_{\Sp(\rtil)}r^{m}\Big(\,\Re\big(\pr\ssolu
				\hspace*{0.06cm}\bssolu\hspace*{0.04cm}\big)
				-\frac{m}{2}\,r^{-1}\,|\ssolu|^{2}\,\Big)\,,$
		\item $\dsp\Im\int_{\U(\rtil)}r^{m}\Delta\ssolu\hspace*{0.06cm}\bssolu
		       =-m\int_{\U(\rtil)}r^{m-1}\Im\big(\pr\ssolu
		       \hspace*{0.06cm}\bssolu\hspace*{0.04cm}\big)
		       +\frac{1}{2}\int_{\Sp(\rtil)}r^{m+1}\,|\ssolu|^{2}\,,$
		\item $\dsp\Re\int_{\U(\rtil)}r^{m+1}\hspace*{0.03cm}	
			   \ssolu\hspace*{-0.006cm}\p_{r}\hspace*{-0.07cm}\bssolu
			   =-\frac{1}{2}\int_{\U(\rtil)}r^{m}\,(n+m)\,|\ssolu|^{2}
			   +\frac{1}{2}\int_{\Sp(\rtil)}r^{m+1}\,|\ssolu|^{2}\,,$\\[2pt]
	\end{enumerate}
	where $\p_{r}:=\xi\cdot\nabla$.
\end{lem}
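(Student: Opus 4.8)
The plan is to obtain all four identities from the Gauss divergence theorem on the ball $\U(\rtil)$, whose outward unit normal along $\Sp(\rtil)$ is precisely $\xi$, so that the normal derivative there equals $\pr\ssolu=\xi\cdot\nabla\ssolu$. Since $\ssolu\in\Htwoloc(\rn)$ with $0\notin\supp\ssolu$, every integrand below is a genuine $\lo$-function on $\U(\rtil)$ — the weight $r^{m}$ (even for $m<0$) never meets the origin — and all the first-order boundary traces on $\Sp(\rtil)$ exist by the trace theorem for $\Htwo$. Approximating $\ssolu$ in $\Htwo$ by smooth fields with support bounded away from the origin, it therefore suffices to verify the identities for smooth $\ssolu$ and pass to the limit; I would carry out all computations in this smooth setting.

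First I would record the one elementary tool used throughout: from $\div\xi=(n-1)/r$ and $\xi\cdot\nabla f=\pr f$ one gets $\div(r^{k}f\,\xi)=r^{k}\pr f+(n+k-1)r^{k-1}f$, whence, for real $f$,
\begin{align*}
  \int_{\U(\rtil)}r^{k}\pr f
  =\int_{\Sp(\rtil)}r^{k}f-(n+k-1)\int_{\U(\rtil)}r^{k-1}f.
\end{align*}
Applied with $f=|\ssolu|^{2}$ and $\Re\big(\ssolu\,\pr\bssolu\big)=\tfrac12\pr|\ssolu|^{2}$, the case $k=m+1$ is exactly $(4)$. For $(2)$ and $(3)$ I would use the weighted Green identity $r^{m}\bssolu\Delta\ssolu=\div\!\big(r^{m}\bssolu\nabla\ssolu\big)-m\,r^{m-1}\bssolu\,\pr\ssolu-r^{m}|\nabla\ssolu|^{2}$; integrating over $\U(\rtil)$ and splitting into real and imaginary parts gives both at once. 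In the real part one reduces the interior term $\int r^{m-1}\Re(\bssolu\,\pr\ssolu)=\tfrac12\int r^{m-1}\pr|\ssolu|^{2}$ by the radial identity above to produce the $r^{-2}|\ssolu|^{2}$ and $r^{-1}|\ssolu|^{2}$ contributions of $(2)$; the imaginary part directly yields $(3)$, its interior contribution being $-m\int_{\U(\rtil)}r^{m-1}\Im(\pr\ssolu\,\bssolu)$ and its boundary contribution the corresponding surface integral.

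The genuinely non-routine identity is $(1)$, a Rellich--Pohozaev identity, and this is where I expect the main obstacle. The idea is to integrate $\int_{\U(\rtil)}r^{m+1}\Delta\ssolu\,\pr\bssolu$ by parts once: the boundary term is $\int_{\Sp(\rtil)}r^{m+1}|\pr\ssolu|^{2}$, while the interior term requires evaluating $\Re\big(\nabla(\pr\bssolu)\cdot\nabla\ssolu\big)$. Here lies the delicate computation — using $\p_{j}\xi_{k}=\tfrac1r(\delta_{jk}-\xi_{j}\xi_{k})$ together with $\Re\sum_{j,k}\xi_{k}\,\p_{j}\p_{k}\bssolu\,\p_{j}\ssolu=\tfrac12\pr|\nabla\ssolu|^{2}$ one finds $\Re\big(\nabla(\pr\bssolu)\cdot\nabla\ssolu\big)=\tfrac1r\big(|\nabla\ssolu|^{2}-|\pr\ssolu|^{2}\big)+\tfrac12\pr|\nabla\ssolu|^{2}$. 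Substituting this back, the $|\pr\ssolu|^{2}$-terms combine to $-m\,r^{m}|\pr\ssolu|^{2}$, and a final application of the radial identity to $\int r^{m+1}\pr|\nabla\ssolu|^{2}$ (case $k=m+1$, $f=|\nabla\ssolu|^{2}$) converts the remaining volume term into $\tfrac{n+m-2}{2}\int r^{m}|\nabla\ssolu|^{2}$ and contributes the boundary term $-\tfrac12\int_{\Sp(\rtil)}r^{m+1}|\nabla\ssolu|^{2}$, giving exactly $(1)$. The only points needing care are the bookkeeping of the $\xi$-derivatives in the mixed second-derivative term and the justification, via the support condition on $\ssolu$, that no boundary contribution arises at the origin.
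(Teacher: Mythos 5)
Your proof is correct, and it is exactly the argument the paper intends: the paper in fact gives no proof of this lemma at all, stating only that it ``can be shown by elementary partial integration'', and your combination of the divergence theorem for the field $r^{k}f\,\xi$, the weighted Green identity, and the Rellich--Pohozaev computation for $(1)$ is precisely that. I checked the two nontrivial ingredients --- the radial identity
$\int_{\U(\rtil)}r^{k}\pr f=\int_{\Sp(\rtil)}r^{k}f-(n+k-1)\int_{\U(\rtil)}r^{k-1}f$
and the pointwise formula
$\Re\big(\nabla\ssolu\cdot\nabla(\pr\bssolu)\big)
=\tfrac1r\big(|\nabla\ssolu|^{2}-|\pr\ssolu|^{2}\big)+\tfrac12\pr|\nabla\ssolu|^{2}$
--- and both are right, as is the bookkeeping that assembles them into $(1)$, $(2)$ and $(4)$, including the constants $\tfrac{n+m-2}{2}$ and $-m$.

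One point you should make explicit instead of glossing over: your (correct) treatment of the imaginary part yields
\begin{align*}
	\Im\int_{\U(\rtil)}r^{m}\Delta\ssolu\,\bssolu
	=-m\int_{\U(\rtil)}r^{m-1}\Im\big(\pr\ssolu\,\bssolu\big)
	+\int_{\Sp(\rtil)}r^{m}\,\Im\big(\pr\ssolu\,\bssolu\big)\,,
\end{align*}
whereas item $(3)$ as printed has the boundary term $\tfrac12\int_{\Sp(\rtil)}r^{m+1}|\ssolu|^{2}$. The printed term is a misprint (apparently copied from $(4)$): for real-valued $\ssolu$ the left-hand side and the volume term of $(3)$ vanish, while the printed surface term does not, so the identity as stated cannot hold. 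The surface term your derivation produces is also the one actually used when this lemma is invoked in the proof of Lemma \ref{lem:app-hh_a-priori}, where the sphere integrals carry $\Im\big(\pr\tssolu\,\btssolu\big)$. So there is no gap in your argument; rather, it silently proves the corrected statement, and saying ``the imaginary part directly yields $(3)$'' should be replaced by noting that it yields $(3)$ with this corrected boundary term.
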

%
%
\section{Polynomial Decay and A-Priori Estimate for the Helmholtz Equation}
\label{sec:pol-est_hh-theory}
%
%
In this section we present well known results for the Helmholtz equation, which 
we will use to achieve similar results for Maxwell's equations. We start with a 
regularity result \big(\,cf. \cite[Lemma 4]{weck_generalized1_1997}\,\big) and 
the polynomial decay \big(\,cf. \cite[Lemma 5]{weck_generalized1_1997}\,\big).
\begin{lem}\label{lem:app-hh_reg}
	Let $t\in\reals$. If $\ssolu\in\lttrn$ and $\Delta\ssolu\in\lttrn$, it holds
	$\ssolu\in\Htwotrn$ and 
	\begin{align*}
		\normHtwotrn{\ssolu}
			\leq c\cdot\Big(\normlttrn{\Delta\ssolu}+\normlttrn{\ssolu}\Big)
	\end{align*}
	with $c\in(0,\infty)$ independent of $\ssolu$\;and $\Delta\ssolu$.
\end{lem}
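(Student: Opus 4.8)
The plan is to reduce the weighted estimate to the unweighted case $t=0$ by the substitution $v:=\rho^{t}u$, where $\rho=(1+r^{2})^{1/2}$, and to treat $t=0$ directly by the Fourier transform. First I would record that $\Delta u=\Delta(\rho^{-t}v)=\rho^{-t}\Delta v+2\nabla(\rho^{-t})\cdot\nabla v+(\Delta\rho^{-t})v$, so multiplying by $\rho^{t}$ gives the perturbed equation
\[
\Delta v=g-b\cdot\nabla v-a v,\qquad g:=\rho^{t}\Delta u,\quad b:=2\rho^{t}\nabla(\rho^{-t}),\quad a:=\rho^{t}\Delta(\rho^{-t}).
\]
A short computation using $\nabla\rho=x/\rho$ shows $b=-2t\rho^{-2}x$ and $a=-tn\rho^{-2}+t(t+2)r^{2}\rho^{-4}$, hence $b$ and $a$ are smooth and uniformly bounded (indeed $a=\mathcal{O}(\rho^{-2})$). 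Thus $g\in\ltrn$ with $\normltrn{g}=\normlttrn{\Delta u}$, and $\normltrn{v}=\normlttrn{u}$.

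The qualitative a-priori regularity is the point that needs care: we only know $u\in\lttrn$ and $\Delta u\in\lttrn\subset\ltlocrn$, so interior elliptic regularity for the Laplacian yields $u\in\htwoloc(\rn)$ and hence $v\in\htwoloc(\rn)$, but global membership $v\in\Htworn$ is not yet available. To obtain it I would first produce a global bound on $\nabla v$ by a cut-off and monotone convergence argument using the functions $\eta_{k}$ from Section~2: for fixed $k$, integration by parts (legitimate since $v\in\htwoloc$ and $\eta_{k}$ has compact support) gives
\[
\int\eta_{k}^{2}|\nabla v|^{2}=-\Re\int\eta_{k}^{2}\,\bar v\,\Delta v-2\Re\int\eta_{k}\,\bar v\,\nabla\eta_{k}\cdot\nabla v.
\]
Inserting $\Delta v=g-b\cdot\nabla v-av$, using $|b|,|a|,|\nabla\eta_{k}|\le c$ and Young's inequality to absorb every $\nabla v$ term into the left-hand side, one arrives at $\int\eta_{k}^{2}|\nabla v|^{2}\le c\,(\normltrn{g}^{2}+\normltrn{v}^{2})$ with $c$ independent of $k$. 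Letting $k\to\infty$ and invoking monotone convergence gives $\nabla v\in\ltrn$ with $\normltrn{\nabla v}^{2}\le c\,(\normltrn{g}^{2}+\normltrn{v}^{2})$, whence $\Delta v=g-b\cdot\nabla v-av\in\ltrn$ together with the bound $\normltrn{\Delta v}\le c\,(\normltrn{g}+\normltrn{v})$.

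With $v,\Delta v\in\ltrn$ secured, the case $t=0$ is immediate from Plancherel: $\widehat{\Delta v}=-|\xi|^{2}\hat v$ gives $(1+|\xi|^{2})\hat v\in\ltrn$, so $v\in\Htworn$, and from $(1+|\xi|^{2})^{2}\le 2(1+|\xi|^{4})$ one gets $\normHtworn{v}\le c\,(\normltrn{\Delta v}+\normltrn{v})$. Combining with the previous bound yields $\normHtworn{v}\le c\,(\normlttrn{\Delta u}+\normlttrn{u})$. Finally I would transfer back to $u=\rho^{-t}v$: for $|\alpha|\le 2$, Leibniz gives $\rho^{t}\p^{\alpha}u=\sum_{\beta\le\alpha}\binom{\alpha}{\beta}\big(\rho^{t}\p^{\alpha-\beta}\rho^{-t}\big)\p^{\beta}v$, and since each factor $\rho^{t}\p^{\gamma}\rho^{-t}$ with $|\gamma|\le 2$ is bounded (checked exactly as for $a,b$), we obtain $\normHtwotrn{u}\le c\,\normHtworn{v}\le c\,(\normlttrn{\Delta u}+\normlttrn{u})$, which also establishes $u\in\Htwotrn$. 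The main obstacle throughout is precisely this qualitative step—justifying that $v$ genuinely lies in $\Htworn$ rather than merely satisfying the formal identities—and it is exactly the cut-off plus monotone-convergence bootstrap for $\nabla v$ that removes the apparent circularity between needing $\nabla v\in\ltrn$ to make sense of $\Delta v\in\ltrn$ and needing $\Delta v\in\ltrn$ to conclude $v\in\Htworn$.
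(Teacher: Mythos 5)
Your proof is correct and follows essentially the same route as the paper's: interior regularity, a cut-off integration-by-parts energy estimate to secure the gradient bound (which is exactly how the paper breaks the circularity you describe), the Fourier-transform case $t=0$ applied to $\rho^{t}\ssolu$, and a Leibniz transfer back using boundedness of the coefficients $\rho^{t}\p^{\gamma}\rho^{-t}$, $|\gamma|\le 2$. The only cosmetic difference is that you conjugate by the weight at the outset (working with $v=\rho^{t}\ssolu$, a perturbed equation with bounded $a,b$, and plain cut-offs $\eta_{k}$), whereas the paper runs the identical estimate on $\ssolu$ itself using the weighted cut-offs $\eta_{\rtil}:=\rho^{t}\eta(r/\rtil)$ and records the weighted $\Hgen{}{1}{t}$-bound first.
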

\begin{proof}
	For $t=0$ we have $\ssolu,\Delta\ssolu\in\ltrn$ and 
	using Fourier transformation we obtain\\[-7pt]
	\begin{align}
	\label{equ:app-hh_htwo-ww-est}
	\begin{split}
		\normltrn{\Delta\ssolu}^{2}+\normltrn{\ssolu}^{2}
		&=\normltrn{r^{2}\calF(\ssolu)}^{2}+\normltrn{\calF(\ssolu)}^{2}\\
		&=\int_{\rn}(r^4+1)\,|\calF(\ssolu)|^{2}
		 \geq\frac{1}{2}\cdot\normltrn{(1+r^{2})\,\calF(\ssolu)}^{2}\,,
	\end{split}
	\end{align}
	yielding $\ssolu\in\Htworn$ and the desired estimate. So let us switch to  
	$t\neq0$. Then, using a well known result concerning inner 
	regularity \big(\,e.g., \cite[Chapter VII, $\S\hspace*{0.2mm}3.2$, Theorem 1]
	{dautray_mathematical_2000}\,\big), we already have $\ssolu\in\Htwolocrn$. 
	Now let $\rtil>1$ and define $\eta_{\rtil}\in\dsp\cicrn$ by 
	$\eta_{\rtil}(x):=\rho^t\eta(r(x)/\rtil)$. Then 
	$\eta_{\rtil}\ssolu\in\Htworn$,
	\begin{align*}
		|\nabla\eta_{\rtil}|\leq c\cdot\rho^{t-1}\;\,\text{with}\;\,c=c(t)>0\,,
	\end{align*}
	and 
	\begin{align*}
		\scp{\nabla\big(\eta_{\rtil}\ssolu\big)}
		 {\nabla\big(\eta_{\rtil}\ssolu\big)}_{\ltrn}
		&=\Re\scpltrn{\nabla\ssolu}{\nabla\big(\eta_{\rtil}^{2}\ssolu\big)}
		  +\norm{\big(\nabla\eta_{\rtil}\big)\ssolu}_{\ltrn}^{2}\\
		&\leq c\cdot\Big(\normltrn{\eta_{\rtil}\Delta\ssolu}
		 \normltrn{\eta_{\rtil}\ssolu}+\normlttmorn{\ssolu}^{2}\Big)\\
		&\leq c\cdot\Big(\normlttrn{\Delta\ssolu}^{2}
		 +\normlttrn{\ssolu}^{2}\Big)\,,
	\end{align*}
	with $c=c(n,t)\in(0,\infty)$, hence
	\begin{align*}
		\norm{\nabla\ssolu}_{\ltt(\B(\rtil))}
		\leq
		 \norm{\nabla(\eta_{\rtil}\ssolu)-(\nabla\eta_{\rtil})\ssolu}_{\ltrn}
		 \leq c(n,t)\cdot\Big(\normlttrn{\Delta\ssolu}
		 +\normlttrn{\ssolu}\Big)\,.	
	\end{align*}
	Sending $\rtil\To\infty$ (\,monotone convergence\,) shows 
	$\ssolu\in\Hotrn$ and
	\begin{align}\label{equ:app-hh_ho-est}
		\normHotrn{\ssolu}
			\leq c(n,t)\cdot\Big(\normlttrn{\Delta\ssolu}
			+\normlttrn{\ssolu}\Big)\,.	
	\end{align}
	Moreover,
	\begin{align*}
		\Delta\big(\rho^{t}\ssolu\big)
			=t\Big(n+(t-2)\frac{r^{2}}{1+r^{2}}\Big)\rho^{t-2}\ssolu
			 +2r\rho^{t-2}\pr\ssolu+\rho^{t}\Delta\ssolu\,,
	\end{align*}
	such that with \eqref{equ:app-hh_ho-est} we obtain
	\begin{align}\label{equ:app-hh_Delta-est}
			\normltrn{\Delta\big(\rho^{t}\ssolu\big)}
				\leq c\cdot\Big(\normlttrn{\Delta\ssolu}
				+\normlttrn{\ssolu}\Big)\,,
	\end{align}
	with $c\in(0,\infty)$ independent of $\ssolu$ and $\Delta\ssolu$. Hence 
	$\Delta\big(\rho^{t}\ssolu\big)\in\ltrn$ and we may apply the first case. 
	This shows $\rho^t\ssolu\in\Htworn$ and using \eqref{equ:app-hh_htwo-ww-est}, 
	\eqref{equ:app-hh_ho-est} and\;\eqref{equ:app-hh_Delta-est}, we obtain 
	(\,uniformly w.r.t. $\ssolu$ and $\Delta\ssolu$\,)
	\begin{align*}
		\normHtwotrn{\ssolu}
		  &\leq c\cdot\Big(\normHtworn{\rho^{t}\ssolu}
		   +\normltrn{\big(\nabla\hspace*{-0.03cm}\rho^{t}\big)
		    \hspace*{0.04cm}\nabla\ssolu}
		   +\normltrn{\big(\nabla\hspace*{-0.03cm}\rho^{t}\big)\ssolu}
		   +\sum_{|\alpha|=2}\normltrn{\big(\p^{\alpha}
		    \hspace*{-0.09cm}\rho^{t}\hspace*{0.03cm}\big)\ssolu}\Big)\\
		  &\leq c\cdot\Big(\normltrn{\Delta\big(\rho^{t}\ssolu\big)}
		   +\normltrn{\rho^{t}\ssolu}+\normlttmorn{\nabla\ssolu}
		   +\normlttmorn{\ssolu}\Big)\\[8pt]
		  &\leq c\cdot\Big(\normlttrn{\Delta\ssolu}
		   +\normlttrn{\ssolu}\Big)
	\end{align*}
	yielding $\ssolu\in\Htwotrn$ and the required estimate.
\end{proof}
\begin{lem}[Polynomial decay]\label{lem:app-hh_pol-dec}
	Let $J\Subset\reals\sm(0)$ be some interval, $\ga\in J$ and $s,t\in\reals$ 
	with $t>-1/2$ and $t\leq s$. If $\ssolu\in\lttrn$ and 
	$\srhs:=\big(\,\Delta+\ga^{2}\,\big)\,\ssolu\in\ltsporn$ it holds 
	\begin{align*}
		\ssolu\in\Htwosrn
		\qquad\text{and}\qquad
		\normHtwosrn{\ssolu}\leq c\cdot\Big(\normltsporn{g}
		 +\normltsmorn{\ssolu}\Big)
	\end{align*}
	with $c=c(n,s,J)\in (0,\infty)$ not depending on $\ga,\srhs$\;or $\ssolu$.
\end{lem}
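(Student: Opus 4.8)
The plan is to reach $\ssolu\in\Htwosrn$ by combining the interior regularity of Lemma \ref{lem:app-hh_reg} with a weight-raising bootstrap driven by the radial integration identities of Lemma \ref{lem:app-tech_Int-rules}, following the template of \cite[Lemma 5]{weck_generalized1_1997}. First I would record the starting regularity: since $s\geq t$ we have $\srhs\in\ltsporn\subset\lttrn$, so $\Delta\ssolu=\srhs-\ga^{2}\ssolu\in\lttrn$, and Lemma \ref{lem:app-hh_reg} gives $\ssolu\in\Htwotrn$. This guarantees enough a-priori decay of $\ssolu$ and $\nabla\ssolu$ to make the integral identities below meaningful.

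The heart of the matter is a single weight-raising step. Fixing a target weight $\tau'$ with $-1/2<\tau'\notin\I$, I would test the equation $(\Delta+\ga^{2})\ssolu=\srhs$ against the Rellich-type multipliers $r^{m}\bssolu$ and $r^{m+1}\pr\bssolu$ (with $m$ tied to $\tau'$), integrate over $\U(\rtil)$, and insert identities $(1)$ through $(4)$ of Lemma \ref{lem:app-tech_Int-rules}; near the origin I replace $\ssolu$ by $\ceta\ssolu$ so that $0\notin\supp$, the commutator $(\Delta\ceta)\ssolu+2\nabla\ceta\cdot\nabla\ssolu$ being compactly supported and hence harmless. Taking real and imaginary parts and adding the identities in a suitable proportion isolates $\int r^{m}|\nabla\ssolu|^{2}$ together with the definite term $\ga^{2}\int r^{m}|\ssolu|^{2}$; here the uniform positivity of $\ga^{2}$ for $\ga\in J\Subset\reals\sm(0)$ is exactly what forces the decay gain. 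The forcing contributes a term controlled by the weight-$(\tau'+1)$ norm of $\srhs$, while every remaining interior term carries strictly lower weight and is absorbed into the weight-$(\tau'-1)$ norm of $\ssolu$. The spherical integrals over $\Sp(\rtil)$ are eliminated by selecting, via Lemma \ref{lem:app-tech_liminf}, a sequence $\rtil\To\infty$ along which they vanish, and the condition $\tau'\notin\I$ ensures the coefficients $(n+m-2)$ and $(n+m)$ multiplying those terms do not degenerate.

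Starting from $\tau=t$ and raising the weight in steps of size bounded below (shrunk slightly to dodge the discrete resonance set $\I$ and to stay under $s$), finitely many applications of the gain show, qualitatively, that $\ssolu\in\lgen{}{2}{s'}(\rn)$ for every $s'<s$. Monotone convergence, $\normltws{\rho^{s'}\ssolu}\nearrow\normltws{\rho^{s}\ssolu}$ as $s'\uparrow s$, then upgrades this to $\ssolu\in\ltsrn$, also when $s\in\I$, whence $\Delta\ssolu=\srhs-\ga^{2}\ssolu\in\ltsrn$ and Lemma \ref{lem:app-hh_reg} yields $\ssolu\in\Htwosrn$. With $\ssolu$ now known to lie in $\Htwosrn$, all weight-$s$ integrals converge a priori, so I would run the gain once more directly at $\tau'=s$ (using $\ssolu\in\ltsmorn$ and $\srhs\in\ltsporn$); this final clean application produces the stated estimate $\normHtwosrn{\ssolu}\leq c\cdot(\normltsporn{\srhs}+\normltsmorn{\ssolu})$, with the constant arising from the single weight-$s$ identity and hence depending only on $n$, $s$, $J$, independently of $\ga$, $\srhs$, $\ssolu$ and of $t$.

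I expect the principal difficulty to be the algebraic bookkeeping in the one-step gain: choosing $m$ and the linear combination of identities $(1)$ through $(4)$ so that $\int r^{m}|\nabla\ssolu|^{2}$ and $\ga^{2}\int r^{m}|\ssolu|^{2}$ surface with the correct sign while all other interior contributions are genuinely of lower weight and therefore absorbable, all with constants uniform over $\ga\in J$ and with the boundary-at-infinity terms safely discarded off $\I$. The threshold $t>-1/2$ is precisely the value below which this decay mechanism breaks down, which is why the hypothesis cannot be relaxed.
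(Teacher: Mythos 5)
Your template (interior regularity from Lemma \ref{lem:app-hh_reg}, Rellich-type multipliers from Lemma \ref{lem:app-tech_Int-rules}, disposal of spherical terms via Lemma \ref{lem:app-tech_liminf}) is the right one, but the engine of your bootstrap --- the single weight-raising step --- is circular, and this is a genuine gap. If you test $(\,\Delta+\ga^{2}\,)\,\ssolu=\srhs$ against $r^{m}\bssolu$ and $r^{m+1}\pr\bssolu$ with $m$ tied to a target weight strictly larger than the weight $t$ at which $\ssolu$ is currently known to lie, the identities of Lemma \ref{lem:app-tech_Int-rules} produce spherical terms of the form $\rtil^{2\tau'+1}\int_{\Sp(\rtil)}\big(\,|\nabla\ssolu|^{2}+|\ssolu|^{2}\,\big)$ which are not of favourable sign. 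To discard them along a sequence $\rtil\To\infty$ via Lemma \ref{lem:app-tech_liminf} you need $r^{2\tau'}\big(\,|\nabla\ssolu|^{2}+|\ssolu|^{2}\,\big)\in\lo(\rn)$ --- which is precisely what you are trying to prove; with only weight-$t$ information the unfavourable terms carry an extra factor $r^{2(\tau'-t)}$ that blows up along any subsequence on which the weight-$t$ sphere integrals tend to zero. Hence your claimed one-step estimate (weight $\tau'$ controlled by forcing at $\tau'+1$ and solution at $\tau'-1$) is unproved, and everything downstream collapses: the bootstrap, the monotone-convergence upgrade (which would in any case require bounds on $\normltws{\rho^{s'}\ssolu}$ \emph{uniform} in $s'<s$, not mere finiteness for each $s'$), and the final ``clean run at weight $s$''.

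The paper circumvents exactly this obstruction by a device absent from your proposal. The multiplier identity is applied only at the \emph{known} weight $t$, with $\beta=\max\{(n-1)/2,\,t+(n-1)/2\}$, where the bad spherical terms may legitimately be discarded because $\ssolu\in\Htwotrn$; this yields the inner-radius-uniform estimate \eqref{eq:app-hh_pol-dec-first-est}, which by itself gains nothing. The gain is then manufactured by multiplying \eqref{eq:app-hh_pol-dec-first-est} by $\rhat^{-\da}$ with $\da:=1-2(s-t)\in(0,1)$, integrating over $\rhat\in(1,\rcheck)$, and applying Fubini: the iterated integral collapses to an interior integral against the weight $\theta_{\rcheck}\sim(1-\da)^{-1}r^{1-\da}$, which upgrades $r^{2t}$ to $r^{2t+1-\da}=r^{2s}$ (see \eqref{equ:app-hh_pol-dec-sec-est}--\eqref{equ:app-hh_mud-est}). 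The new outer boundary term at $\Sp(\rcheck)$ is handled by a second application of Lemma \ref{lem:app-tech_Int-rules}, estimate \eqref{equ:app-hh_bdy-int-est}, and is then discarded using the fact that $\ssolu\in\Hgen{}{2}{s-\frac{1}{2}}(\rn)$ is \emph{already known}, since $s-\tfrac12\leq t$. This mechanism caps the gain per step at $1/2$ (the paper iterates in steps of size $1/4$), lands exactly at weight $s$ with the constant $c(n,s,J)$, and never needs to avoid the exceptional set $\I$: that set matters only for the Weck--Witsch decomposition used in the Maxwell reduction (Lemma \ref{lem:pol-est_decomp}), not for the scalar Helmholtz estimate, so your ``dodging $\I$'' is spurious.
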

\begin{proof}
	The assertion follows directly from Lemma \ref{lem:app-hh_reg}, if we can 
	show 
	\begin{align*}
		\ssolu\in\ltsrn
		\qquad\text{with}\qquad		
		\normltsrn{\ssolu}\leq c\cdot\Big(\normltsporn{g}
		 +\normltsmorn{\ssolu}\Big)\,.
	\end{align*}
	Therefore let $v:=\check{\chi}\ssolu$, where $\check{\chi}\in\cirn$ 
	with $\check{\chi}=1$ on $\cU(1)$ and vanishing in a neighbourhood of the 
	origin. By assumption we already have $\ssolu\in\Htwotrn$ 
	(\,cf. Lemma \ref{lem:app-hh_reg}\,), hence $v\in\Htwolocrn$ and we 
	may apply the partial integration rules from Lemma 
	\ref{lem:app-tech_Int-rules} to
	\begin{align*}
		\Re\int_{\Gap}\big(\,\Delta\ssolu+\ga^{2}\ssolu\,\big)
		   \big(\,r^{2t+1}\pr\bssolu
		   +\beta r^{2t}\hspace*{0.01cm}\bssolu\,\big)
		   =\Re\int_{\Gap}\big(\,\Delta v+\ga^{2} v\,\big)
		   \big(\,r^{2t+1}\pr\bar{v}
		   +\beta r^{2t}\hspace*{0.01cm}\bar{v}\,\big)=\ldots,
	\end{align*}
	with $\rtil>\rhat\geq1$ and
	\begin{align*}
		\beta:=\max\big\{\,(n-1)/2\hspace*{0.05cm},
		\hspace*{0.01cm}t+(n-1)/2\,\big\}\,.	
	\end{align*}
	After some rearrangements this leads to
	\begin{align}\label{equ:app-hh_poly-dec-I1}
		\begin{split}
			&\hspace*{-1.2cm}
			 \int_{\Gap}r^{2t}\hspace*{0.03cm}
			 \Big(\,\big(\hspace*{0.05cm}\beta-(n+2t-2)/2\hspace*{0.05cm}\big)
			 \hspace*{0.03cm}|\nabla \ssolu|^{2}
		 	 +\big(\hspace*{0.05cm}(n+2t)/2-\beta\hspace*{0.05cm}\big)
		 	 \hspace*{0.03cm}\ga^{2}|\ssolu|^{2}\;\Big)\\
			&\hspace*{-0.6cm}+2t\int_{\Gap}r^{2t}\hspace*{0.03cm}|\pr\ssolu|^{2}
		 	 +\int_{\Sp(\rtil)}\rtil^{2t+1}\hspace*{0.03cm}|\nabla \ssolu|^{2}\\
			&=-\Re\int_{\Gap}\big(\,\Delta\ssolu+\ga^{2}\ssolu\,\big)
		 	 \big(\,r^{2t+1}\pr\bssolu+\beta r^{2t}\hspace*{0.01cm}\bssolu\,\big)
		 	 +t(n+2t-2)\beta\int_{\Gap}r^{2t-2}\hspace*{0.03cm}|\ssolu|^{2}\\
			&\hspace*{0.6cm}
			 +\int_{\Sp(\rhat)}\rhat^{2t+1}\Big(\,\beta t\rhat^{-2}
			 \hspace*{0.03cm}|\ssolu|^{2}-\beta\rhat^{-1}\Re\big(\pr\ssolu
			 \hspace*{0.06cm}\bssolu\hspace*{0.04cm}\big)
			 -|\pr\ssolu|^{2}\,\Big)\\
			&\hspace*{1.2cm}+\int_{\Sp(\rtil)}\rtil^{2t+1}\Big(\,|\pr\ssolu|^{2}
		 	 +\beta\rtil^{-1}\Re\big(\pr\ssolu\hspace*{0.06cm}\bssolu
			 \hspace*{0.04cm}\big)
			 -\beta t\rtil^{-2}\hspace*{0.03cm}|\ssolu|^{2}\,\Big)\\
			&\hspace*{1.8cm}+\frac{1}{2}\int_{\Sp(\rtil)}\rtil^{2t+1}
			 \Big(\,|\nabla\ssolu|^{2}+\ga^{2}|\ssolu|^{2}\,\Big)
			 +\frac{1}{2}\int_{\Sp(\rhat)}\rhat^{2t+1}\Big(\,|\nabla\ssolu|^{2}
		 	 -\ga^{2}|\ssolu|^{2}\,\Big)\,.
		\end{split}
	\end{align}
	Let us first have a look on the left hand side of this equation. For 
	$t\geq 0$ (\,i.e., $\beta=t+(n-1)/2$\,) we skip the second and third integral 
	to obtain
	\begin{align*}
		&\int_{\Gap}r^{2t}\hspace*{0.03cm}\Big(\,\big(\hspace*{0.05cm}\beta
		 -(n+2t-2)/2\hspace*{0.05cm}\big)\hspace*{0.03cm}|\nabla \ssolu|^{2}
		 +\big(\hspace*{0.05cm}(n+2t)/2-\beta\hspace*{0.05cm}\big)
		 \hspace*{0.03cm}\ga^{2}|\ssolu|^{2}\;\Big)\\
		&\hspace*{0.6cm}+2t\int_{\Gap}r^{2t}\hspace*{0.03cm}|\pr\ssolu|^{2}
		 +\int_{\Sp(\rtil)}\rtil^{2t+1}\hspace*{0.03cm}|\nabla \ssolu|^{2}\\
		&\hspace*{1.2cm}\geq\frac{1}{2}\int_{\Gap}r^{2t}\hspace*{0.03cm}
		 \Big(\,\big(\hspace*{0.03cm}2\beta-(n+2t-2)\hspace*{0.03cm}\big)
		 \hspace*{0.03cm}|\nabla \ssolu|^{2}+\big(\hspace*{0.03cm}(n+2t)-2\beta
		 \hspace*{0.03cm}\big(\hspace*{0.03cm}\ga^{2}|\ssolu|^{2}\;\Big)\\
		&\hspace*{1.2cm}=\frac{1}{2}\int_{\Gap}r^{2t}\hspace*{0.03cm}
		 \Big(\,|\nabla\ssolu|^{2}+\ga^{2}|\ssolu|^{2}\,\Big)\,,
	\end{align*}
	while in the case of $t<0$ (\,i.e., $\beta=(n-1)/2$\,) we just skip the third 
	integral and end up with
	\begin{align*}
		&\int_{\Gap}r^{2t}\hspace*{0.03cm}\Big(\,\big(\hspace*{0.05cm}\beta
		 -(n+2t-2)/2\hspace*{0.05cm}\big)\hspace*{0.03cm}|\nabla \ssolu|^{2}
		 +\big(\hspace*{0.05cm}(n+2t)/2-\beta\hspace*{0.05cm}\big)
		 \hspace*{0.03cm}\ga^{2}|\ssolu|^{2}\;\Big)\\
		&\hspace*{0.6cm}+2t\int_{\Gap}r^{2t}\hspace*{0.03cm}|\pr\ssolu|^{2}
		 +\int_{\Sp(\rtil)}\rtil^{2t+1}\hspace*{0.03cm}|\nabla \ssolu|^{2}\\
		&\hspace*{1.2cm}\geq\int_{\Gap}r^{2t}\hspace*{0.03cm}
		 \Big(\,\big(\hspace*{0.03cm}\beta-(n+2t-2)/2+2t\hspace*{0.03cm}\big)
		 \hspace*{0.03cm}|\nabla \ssolu|^{2}+\big(\hspace*{0.03cm}(n+2t)/2-\beta
		 \hspace*{0.03cm}\big)\hspace*{0.03cm}\ga^{2}|\ssolu|^{2}\;\Big)\\
		&\hspace*{1.2cm}
		 =\bigg(\frac{1}{2}+t\bigg)\int_{\Gap}r^{2t}\hspace*{0.03cm}
		 \Big(\,|\nabla\ssolu|^{2}+\ga^{2}|\ssolu|^{2}\,\Big)\,,
	\end{align*}
	since $|\pr\ssolu|\leq|\nabla\ssolu|$. Thus for arbitrary $t\in\reals$ the 
	left hand side of \eqref{equ:app-hh_poly-dec-I1} can be estimated from below 
	by
	\begin{align*}
		\min\bigg\{\frac{1}{2},\frac{1}{2}+t\bigg\}\int_{\Gap}r^{2t}
		\hspace*{0.03cm}
		\Big(\,|\nabla\ssolu|^{2}+\ga^{2}|\ssolu|^{2}\,\Big)\,.
	\end{align*}
	For the right hand side we have (\,$\rtil>1$\,)
	\begin{align*}
		&\int_{\Sp(\rtil)}\rtil^{2t+1}\Big(\,|\pr\ssolu|^{2}+\beta\rtil^{-1}
		 \Re\big(\pr\ssolu\hspace*{0.06cm}\bssolu\hspace*{0.04cm}\big)-\beta t 
		 \rtil^{-2}|\ssolu|^{2}\,\Big)\\
		&\hspace*{0.6cm}
		 \leq\int_{\Sp(\rtil)}\rtil^{2t+1}\Big(\,|\pr\ssolu|^{2}+\beta
		 |\pr\ssolu\hspace*{0.06cm}\bssolu\hspace*{0.02cm}|
		 +\beta|t||\ssolu|^{2}\,\Big)
		 \leq c\cdot\int_{\Sp(\rtil)}\rtil^{2t+1}
		 \Big(\,|\nabla\ssolu|^{2}+|\ssolu|^{2}\,\Big)\,,
	\end{align*}
	as well as
	\begin{align*}
		&\int_{\Sp(\rhat)}\rhat^{2t+1}\Big(\,\beta t\rhat^{-2}
		 \hspace*{0.03cm}|\ssolu|^{2}-\beta\rhat^{-1}\Re\big(\pr\ssolu
		\hspace*{0.06cm}\bssolu\hspace*{0.04cm}\big)-|\pr\ssolu|^{2}\,\Big)\\
		&\hspace*{0.6cm}\leq\int_{\Sp(\rtil)}\rtil^{2t+1}
		 \Big(\,\beta|t|\rhat^{-2}\hspace*{0.03cm}|\ssolu|^{2}+\beta\rhat^{-1}
		 |\pr\ssolu\hspace*{0.06cm}\bssolu\hspace*{0.02cm}|\,\Big)
		 \leq c\cdot\int_{\Sp(\rtil)}\rtil^{2t+1}
		 \Big(\,\rhat^{-2}|\ssolu|^{2}+|\nabla\ssolu|^{2}\,\Big)\,,
	\end{align*}
	such that equation \eqref{equ:app-hh_poly-dec-I1} becomes
	\begin{align*}
		&\min\bigg\{\frac{1}{2},\frac{1}{2}+t\bigg\}\int_{\Gap}r^{2t}
		 \hspace*{0.03cm}
		 \Big(\,|\nabla\ssolu|^{2}+\ga^{2}|\ssolu|^{2}\,\Big)\\
		&\hspace*{0.6cm}\leq \int_{\Gap}r^{t+1}|\hspace*{0.02cm}
		 \srhs\hspace*{0.02cm}|
		 \hspace*{0.03cm}\Big(\,r^{t}\hspace*{0.03cm}|\nabla\ssolu|
		 +\beta r^{t-1}\hspace*{0.03cm}|\ssolu|\,\Big)+\beta\hspace*{0.02cm}
		 \big|\hspace*{0.02cm}t(n+2t-2)\hspace*{0.03cm}\big|
		 \int_{\Gap}r^{2t-2}\hspace*{0.03cm}|\ssolu|^{2}\\
		&\hspace*{1.2cm}+c(n,t)\cdot\left(\,\int_{\Sp(\rhat)}
		 \rhat^{2t+1}\Big(\,\rhat^{-2}\hspace*{0.03cm}|\ssolu|^{2}
		 +|\nabla\ssolu|^{2}
		 -\ga^{2}|\ssolu|^{2}\,\Big)+\int_{\Sp(\rtil)}\rtil^{2t+1}
		 \Big(\,|\nabla\ssolu|^{2}+|\ssolu|^{2}\,\Big)\,\right)\,.
	\end{align*}
	By assumption we have $\ssolu\in\Htwotrn$, such that according to 
	Lemma \ref{lem:app-tech_liminf} the lower limit for $\rtil\To\infty$ of the 
	last boundary integral vanishes. Hence we may replace $\Gap$ by $\cU(\rhat)$ 
	and in addition use Young's inequality to obtain
	\begin{align}\label{eq:app-hh_pol-dec-first-est}
		&\hspace*{-0.6cm}
		 \norm{r^{t}\hspace*{0.03cm}\nabla\ssolu}_{\lt(\cU(\rhat))}^{2}
		 +\ga^{2}\norm{r^{t}\hspace*{0.03cm}\ssolu}_{\lt(\cU(\rhat))}^{2}\\
		&\leq c(n,t)\cdot
		 \Big(\norm{r^{t+1}\hspace*{0.03cm}\srhs}_{\lt(\cU(\rhat))}^{2}
		 +\norm{r^{t-1}\hspace*{0.03cm}\ssolu}_{\lt(\cU(\rhat))}^{2}
		 +\int_{\Sp(\rhat)}\rhat^{2t+1}\Big(\,|\nabla\ssolu|^{2}
		 -\ga^{2}|\ssolu|^{2}
		 +\rhat^{-2}\hspace*{0.03cm}|\ssolu|^{2}\,\Big)\Big)\notag\\
		&\leq c(n,t)\cdot\Big(\normlttporn{\srhs}^{2}
		 +\normlttmorn{\ssolu}^{2}+\int_{\Sp(\rhat)}
		 \rhat^{2t+1}\Big(\,|\nabla\ssolu|^{2}
		 -\ga^{2}|\ssolu|^{2}+\rhat^{-2}\hspace*{0.03cm}|\ssolu|^{2}\,\Big)\Big)
		 \notag\,.
	\end{align}
	Now suppose that $s=t$. Then the assertion simply follows by choosing 
	$\rhat:=1$ as the trace theorem bounds the surface integral by
	$\norm{\ssolu}_{\Htwo(\U(1))}^{2}$ and with
	Lemma \ref{lem:app-hh_reg}
	\begin{align*}
		\normHotrn{\ssolu}
		 &\leq c(n,s,J)\cdot\Big(\normlttporn{\srhs}+\normlttmorn{\ssolu}
		 +\norm{\ssolu}_{\Htwo(\U(2))}\Big)\\
		&\leq c(n,s,J)\cdot\Big(\normlttporn{\srhs}
		 +\normlttmorn{\ssolu}
		 +\norm{\ssolu}_{\Htwotmorn}\Big)\\
		&\leq c(n,s,J)\cdot\Big(\normlttporn{\srhs}
		 +\normlttmorn{\ssolu}
		 +\norm{\Delta\ssolu}_{\ltmorn}\Big)\\
		&\leq c(n,s,J)\cdot\Big(\normlttporn{\srhs}
		 +\normlttmorn{\ssolu}\Big)
	\end{align*}
	holds.
	For the case $\ssolu\not\in\ltsrn$ let 
	$\hat{s}:=\sup\setb{m\in\reals}{u\in\lgen{}{2}{m}(\rn)\hspace*{0.03mm}}$. 
	Then, w.l.o.g.\footnote{Otherwise we replace $s$ and $t$ by 
	$t_{k}:=t+k/4$ resp.\;$s_{k}:=t_{k+1}$, $k=0,1,2,\ldots$ and obtain the 
	assertion after finitely many steps of the type $t_{k}<s_{k}\leq t_{k}+1/2$ 
	(\,cf. Appendix, Section \ref{sec:proofs-max}, Proof of Lemma 
	\ref{lem:pol-est_decomp},\,).}, we 
	may assume 
	\begin{align*}
		\hat{s}-1/2<t<\hat{s}<s\leq t+1/2\,,
	\end{align*} 
	hence $\da:=1-2(s-t)\in(0,1)$. Multiplying 
	\eqref{eq:app-hh_pol-dec-first-est} with $\rhat^{-\da}$ and integrating 
	from $1$ to some $\rcheck>1$ leads to
	\begin{align}
	\label{equ:app-hh_pol-dec-sec-est}
	\begin{split}
		\int_{1}^{\rcheck}\rhat^{-\da}\int_{\cU(\rhat)}r^{2t}
		 \Big(\,|\nabla\ssolu|^{2}
		 +\ga^{2}|\ssolu|^{2}\,\Big)\,d\rhat
		&\leq c(n,t)\cdot
		 \bigg(\int_{1}^{\rcheck}\rhat^{-\da}\int_{\cU(\rhat)}
		 r^{2t+2}|\srhs|^{2}+r^{2t-2}|\ssolu|^{2}\,d\rhat\qquad\qquad\;\\
		&\qquad+\int_{\Gaprcheck}r^{2t+1-\da}\Big(\,|\nabla\ssolu|^{2}
		 -\ga^{2}|\ssolu|^{2}
		 +r^{-2}\hspace*{0.03cm}|\ssolu|^{2}\,\Big)\,\bigg).
	\end{split}
	\end{align}
	By Fubini's theorem we have for arbitrary $h\in\lorn$
	\begin{align*}
		\int_{1}^{\rcheck}\rhat^{-\da}\int_{\cU(\rhat)}h\,d\rhat
		&=\int_{1}^{\rcheck}\int_{\rhat}^{\infty}
		 \int_{\Sp(\sigma)}\rhat^{-\da}\,h\,d\sigma\,d\rhat
		=\int_{1}^{\infty}\int_{1}^{\min\{\sigma,\rcheck\}}
		 \rhat^{-\da}\int_{\Sp(\sigma)}\,h\,d\rhat\,d\sigma\\
		&=\int_{1}^{\infty}(1-\da)^{-1}
		 \min\{\sigma^{1-\da}-1,\rcheck^{1-\da}-1\}\,\int_{\Sp(\sigma)}h
		 \,d\sigma\\
		&=\int_{1}^{\infty}\int_{\Sp(\sigma)}
		 \underbrace{(1-\da)^{-1}
		 \min\{r^{1-\da}-1,\rcheck^{1-\da}-1\}}_{=:\theta_{\rcheck}}\,h
		 \,d\sigma=\int_{\cU(1)}\theta_{\rcheck}\,h
	\end{align*}
	such that \eqref{equ:app-hh_pol-dec-sec-est} becomes \big(\,note that 
	$\theta_{\rcheck}\leq (1-\da)^{-1}\cdot r^{1-\da}$ and $1-\da=2(s-t)$\,\big)
	\begin{align}
		&\qquad\int_{\cU(1)}\theta_{\rcheck}\,r^{2t}
		\Big(\,|\nabla\ssolu|^{2}+\ga^{2}|\ssolu|^{2}\,\Big)\label{equ:app-hh_mud-est}\\
		&\leq c(n,t)\cdot\bigg(\int_{\cU(1)}\theta_{\rcheck}
		 \Big(\,r^{2t+2}|\srhs|^{2}+r^{2t-2}|\ssolu|^{2}\,\Big)
		 +\int_{\Gaprcheck}r^{2t+1-\da}\Big(\,|\nabla\ssolu|^{2}
		 -\ga^{2}|\ssolu|^{2}
		 +r^{-2}\hspace*{0.03cm}|\ssolu|^{2}\,\Big)\,\bigg)\nonumber\\
		&\leq c(n,s)\cdot\bigg(\normltsporn{\srhs}^{2}
		 +\normltsmorn{\ssolu}^{2}
		 +\int_{\Gaprcheck}r^{2s}\Big(\,|\nabla\ssolu|^{2}
		 -\ga^{2}|\ssolu|^{2}\,\Big)\,\bigg).\nonumber
	\end{align}
	Finally, look at
	\begin{align*}
		\Re\int_{\Gaprcheck}r^{2s}\srhs\bssolu
		=\Re\int_{\Gaprcheck}r^{2s}\srhs\bar{v}\,.
	\end{align*}
	Applying Lemma \ref{lem:app-tech_Int-rules}, we obtain	 
	(after some rearrangements)
	\begin{align*}
		&\int_{\Gaprcheck}r^{2s}\hspace*{0.03cm}
		 \Big(\,|\nabla\ssolu|^{2}-\ga^{2}|\ssolu|^{2}\,\Big)\\
		&\hspace*{0.4cm}=-\Re\int_{\Gaprcheck}r^{2s}\srhs\bssolu
		 +s(n+2s-2)\int_{\Gaprcheck}r^{2s-2}|\ssolu|^{2}\\
		&\hspace*{0.8cm}+\int_{\Sp(\rcheck)}\rcheck^{2s}
		 \Big(\,\Re\big(\pr\ssolu\hspace*{0.06cm}\bssolu\hspace*{0.04cm}\big)
		 -s\rcheck^{-1}|\ssolu|^{2}\,\Big)
		 -\int_{S(1)}\Big(\,\Re\big(\pr\ssolu\hspace*{0.06cm}
		 \bssolu\hspace*{0.04cm}\big)
		 -s|\ssolu|^{2}\,\Big)\\
		&\hspace*{0.4cm}\leq c(n,s)\cdot
		 \bigg(\,\int_{\Gaprcheck}\Big(\,r^{2s+2}|\srhs|^{2}
		 +r^{2s-2}|\ssolu|^{2}\,\Big)+\int_{S(1)}\Big(\,|\nabla\ssolu|^{2}
		 +|\ssolu|^{2}\,\Big)+\int_{\Sp(\rcheck)}\rcheck^{2s}
		  \Big(\,|\nabla\ssolu|^{2}
		 +|\ssolu|^{2}\,\Big)\,\bigg)\,,
	\end{align*}
	hence (\,using the trace theorem and Lemma \ref{lem:app-hh_reg}\,)
	\begin{align}\label{equ:app-hh_bdy-int-est}
		\begin{split}
		&\int_{\Gaprcheck}r^{2s}\hspace*{0.03cm}
		 \Big(\,|\nabla\ssolu|^{2}-\ga^{2}|\ssolu|^{2}\,\Big)\\
		&\hspace*{0.6cm}\leq c(n,s)\cdot\bigg(\,\normltsporn{\srhs}^{2}
		 +\normltsmorn{\ssolu}^{2}+\norm{\ssolu}_{\Htwo(\U(1))}^{2}
		 +\int_{\Sp(\rcheck)}\rcheck^{2s}\Big(\,|\nabla\ssolu|^{2}
		 +|\ssolu|^{2}\,\Big)\,\bigg)\qquad\\
		&\hspace*{0.6cm}\leq c(n,s,J)\cdot\bigg(\,\normltsporn{\srhs}^{2}
		 +\normltsmorn{\ssolu}^{2}
		 +\int_{\Sp(\rcheck)}\rcheck^{2s}\Big(\,|\nabla\ssolu|^{2}
		 +|\ssolu|^{2}\,\Big)\,\bigg)
		\end{split}
	\end{align}
	and inserting \eqref{equ:app-hh_bdy-int-est} into \eqref{equ:app-hh_mud-est} 
	we end up with
	\begin{align*}
		\int_{\cU(1)}\theta_{\rcheck}\,r^{2t}\hspace*{0.03cm}
		 \Big(\,|\nabla\ssolu|^{2}+\ga^{2}|\ssolu|^{2}\,\Big)
		\leq c(n,s,J)\cdot\bigg(\normltsporn{\srhs}^{2}
		 +\normltsmorn{\ssolu}^{2}
		 +\int_{\Sp(\rcheck)}\rcheck^{2s}\Big(\,|\nabla\ssolu|^{2}
		 +|\ssolu|^{2}\,\Big)\,\bigg)\,.
	\end{align*}
	Again the lower limit for $\rcheck\To\infty$ of 
	the boundary integral vanishes (\,cf.\;Lemma \ref{lem:app-tech_liminf} and 
	observe that $\ssolu\in\Hgen{}{2}{s-\frac{1}{2}}(\rn)$, since $0<s-t\leq 1/2$ 
	by assumption\,), such that passing to the limit on a suitable subsequence we 
	obtain
	\begin{align*}
		\norm{\ssolu}_{\ltsrn}^{2}
		&\leq c(n,s,J)\cdot\bigg(\,		
		 \int_{\cU(1)}(1-\da)^{-1}\,r^{2t+1-\da}
		 \Big(\,|\nabla\ssolu|^{2}+\ga^{2}|\ssolu|^{2}\,\Big)
		 +\norm{\ssolu}_{\lts(\U(1))}^{2}\,\bigg)\\
		&\leq c(n,s,J)\cdot
		 \bigg(\,\lim_{\rcheck\rightarrow\infty}
		 \int_{\cU(1)}\theta_{\rcheck}\,r^{2t}\hspace*{0.03cm}
		 \Big(\,|\nabla\ssolu|^{2}+\ga^{2}|\ssolu|^{2}\,\Big)
		 +\norm{\ssolu}_{\ltsmo(\U(1))}^{2}\,\bigg)\\
		&\leq c(n,s,J)\cdot\Big(\,\normltsporn{\srhs}^{2}
		 +\normltsmorn{\ssolu}^{2}\,\Big)\,,
	\end{align*}
	showing $\ssolu\in\ltsrn$ and the required estimate.
\end{proof}
\begin{lem}[A-priori estimate]\label{lem:app-hh_a-priori}
	Let $n\in\N$, $t<-1/2$, $1/2<s<1$, and let $J\Subset\reals\sm(0)$ be an interval. 
	Then there exist $c,\da\in(0,\infty)$, such that for all $\beta\in\C_{+}$ 
	with $\beta^{2}=\nu^{2}+i\nu\tau$, $\nu\in J$, $\tau\in(0,1]$ and 
	$\srhs\in\ltsrn$
	\begin{align}\label{equ:app-hh_a-priori-est}
		\begin{split}
		&\normlttrn{(\,\Delta+\beta\,)^{-1}\srhs}
		 +\normhosmtworn{\exp(-i\nu r)(\,\Delta+\beta\,)^{-1}\srhs}\\
		&\hspace*{4.5cm}\leq c\cdot\Big(\normltsrn{\srhs}
		 +\normltomda{(\,\Delta+\beta\,)^{-1}\srhs}\Big)
		\end{split}
	\end{align}
	holds.
\end{lem}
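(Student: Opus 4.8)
The plan is to transfer the weighted Rellich--Morawetz multiplier technique from the proof of Lemma \ref{lem:app-hh_pol-dec} to the complex wave number $\beta$, the genuinely new features being the sign regime $t<-1/2$ and the appearance of the outgoing radiation condition on the left-hand side. Write $w:=(\Delta+\beta^{2})^{-1}g$ for the solution of the Helmholtz equation $(\Delta+\beta^{2})\,w=g$ with wave number $\beta$, $\beta^{2}=\nu^{2}+i\nu\tau$. Since $\Im\beta^{2}=\nu\tau>0$ for $\tau>0$, the coefficient $\beta^{2}$ lies off the spectrum $[0,\infty)$ of $-\Delta$, so $w\in\ltrn$ is well defined (with $\normlt{w}\leq(\nu\tau)^{-1}\normlt{g}$, using $g\in\ltsrn\subset\ltrn$) and, by the interior regularity used in the proof of Lemma \ref{lem:app-hh_reg} together with Lemma \ref{lem:app-hh_reg} itself, $w\in\Htwolocrn$. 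Thus the integration rules of Lemma \ref{lem:app-tech_Int-rules} become applicable once the origin is cut off: I would fix $\rhat=1$, choose $\check\chi\in\cirn$ vanishing near $0$ and equal to $1$ on $\cU(\rhat)$, and set $v:=\check\chi w$, with the contribution of the bounded region $\U(\rhat)$ absorbed throughout into $\normltomda{w}$ via the trace theorem and Lemma \ref{lem:app-hh_reg}.

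The argument then rests on two identities. First, testing the equation with $\bar v$ and taking imaginary parts isolates the absorbed power $\nu\tau\,\norm{w}_{\lt(\cU(\rhat))}^{2}$ and fixes the sign of the outgoing flux through $\Sp(\rtil)$; this is what makes the limit $\tau\downarrow 0$ meaningful. Second, the Rellich identity obtained by testing with $r^{2t+1}\pr\bar v+b\,r^{2t}\bar v$, where $b:=\max\{(n-1)/2,\,t+(n-1)/2\}$, and expanding via Lemma \ref{lem:app-tech_Int-rules}(1)--(4), yields after the same rearrangement as in Lemma \ref{lem:app-hh_pol-dec} a bound for $\norm{r^{t}\nabla w}_{\lt(\cU(\rhat))}^{2}+\nu^{2}\norm{r^{t}w}_{\lt(\cU(\rhat))}^{2}$ in terms of $\norm{r^{t+1}g}^{2}$, $\norm{r^{t-1}w}^{2}$ and boundary integrals over $\Sp(\rhat)$ and $\Sp(\rtil)$.

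For the radiation part I would pass to $\tilde w:=\exp(-i\nu r)\,w$, for which $\nabla\tilde w=\exp(-i\nu r)\,(\nabla w-i\nu\xi w)$ and which solves $(\Delta+2i\nu\pr+i\nu(n-1)r^{-1}+i\nu\tau)\,\tilde w=\exp(-i\nu r)\,g$ on $\cU(\rhat)$. Applying the rules of Lemma \ref{lem:app-tech_Int-rules} to $\tilde w$ with weight $r^{2(s-1)}$ controls $\norm{r^{s-1}\nabla\tilde w}^{2}$, which is exactly the nontrivial part of $\normhosmtworn{\tilde w}^{2}$ (its $\lt_{s-2}$-part is dominated by $\normlttrn{w}$ because $s-2<t$). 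The purely imaginary zeroth-order term $i\nu\tau$ is bounded by $\nu$ since $\tau\le1$ and is dominated by the absorption identity, keeping all constants uniform in $\tau\in(0,1]$. To finish, Lemma \ref{lem:app-tech_liminf} selects $\rtil\to\infty$ along a subsequence killing the boundary integrals on $\Sp(\rtil)$ (their integrands being integrable by the weighted membership produced above), and monotone convergence upgrades the bounds on $\cU(\rhat)$ to global bounds on $\rn$; the lower-order terms $\norm{r^{t-1}w}$ and the $\Htwo(\U(1))$-norm are absorbed into $\normltomda{w}$, giving the asserted estimate with some $\da\in(0,\infty)$.

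The main obstacle is precisely that for $t<-1/2$ the coercivity constant $\min\{1/2,\,1/2+t\}$ that made the Rellich identity positive in Lemma \ref{lem:app-hh_pol-dec} is now negative, so the bulk term no longer controls $\norm{r^{t}w}$ on its own. Positivity must instead be recovered by coupling the absorption identity (which furnishes $\nu\tau\norm{w}^{2}\ge0$ and pins the sign of the flux at infinity) to the radiation estimate for $\tilde w$, and the whole computation must be carried with constants independent of both $\nu\in J$ and $\tau\in(0,1]$. Managing the $\Sp(\rtil)$-fluxes uniformly as $\tau\downarrow0$, where $w$ ceases to be globally $\lt$, while preserving this uniformity, is the delicate point; it is here that Lemma \ref{lem:app-tech_liminf} is indispensable, and where an alternative indirect argument would instead require Rellich's uniqueness theorem to exclude a nontrivial radiating limit solution of $(\Delta+\nu^{2})w=0$ on $\rn$.
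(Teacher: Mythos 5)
You have most of the scaffolding right, and it matches the paper's own proof (which follows Vogelsang): solvability from the resolvent of the self-adjoint $\Delta$, the gauge transform $\tilde w:=\exp(-i\nu r)\,w$ with its equation $\Delta\tilde w+i\nu\big(\tau+\tfrac{n-1}{r}+2\pr\big)\tilde w=\exp(-i\nu r)\,g$, the real-part multiplier identity with weight $r^{2s-1}$ giving control of $\norm{r^{s-1}\nabla\tilde w}_{\lt}$ with coercivity constant $s-\tfrac12>0$, Lemma \ref{lem:app-tech_liminf} for the fluxes on $\Sp(\rtil)$, and Lemma \ref{lem:app-tech_weighted-est} for absorbing intermediate weights into $\normltomda{w}$. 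But the proposal has a genuine hole exactly where the lemma lives: the uniform-in-$\tau$ bound on $\normlttrn{w}$. You concede that the Rellich identity with weight $r^{2t}$, $t<-1/2$, applied to the ungauged $w$ loses coercivity, and you then assert that positivity ``must be recovered by coupling the absorption identity to the radiation estimate for $\tilde w$'' --- but no such coupling is exhibited. Neither named ingredient supplies it on its own: the absorption identity only yields $\nu\tau\norm{w}_{\lt}^{2}\geq 0$, which degenerates as $\tau\downarrow 0$, while the radiation estimate controls $\nabla w-i\nu\xi w$, which says nothing about the size of $w$ itself (for a nearly outgoing wave these cancel). As written, this is a plan with its central step missing, not a proof.

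For comparison, the paper never touches the ungauged Rellich identity; both of its identities are taken for the gauged function. The missing weighted bound comes from the \emph{imaginary} part of $\int_{\Gaprtil}r^{1-2s}\,\tsrhs\,\btssolu$: the first-order terms $i\nu\big(\tfrac{n-1}{r}+2\pr\big)\tilde w$, tested against $r^{1-2s}\ovl{\tilde w}$ and integrated by parts via Lemma \ref{lem:app-tech_Int-rules}$(4)$ (with $m=-2s$), produce the bulk term $\nu(n-1)\int r^{-2s}|\tilde w|^{2}-\nu(n-2s)\int r^{-2s}|\tilde w|^{2}=\nu(2s-1)\int r^{-2s}|\tilde w|^{2}$, whose coefficient is positive precisely because $s>1/2$ and $|\nu|$ is bounded below on $J$, and which is manifestly independent of $\tau$; the remaining terms are handled by Cauchy--Schwarz against $\norm{r^{1-3s}\tilde w}_{\lt}$ (note $1-3s<-s$) and Young's inequality, after which Lemma \ref{lem:app-tech_weighted-est} absorbs $\normltomthreesrn{\tilde w}$. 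Your intended coupling \emph{can} be made rigorous --- it is essentially Ikebe--Saito's route: spherewise $\nu^{2}\int_{\Sp(r)}|w|^{2}\leq\int_{\Sp(r)}|\pr w-i\nu w|^{2}+2\nu\,\Im\int_{\Sp(r)}\pr w\,\ovl{w}$, bound the flux term through the absorption identity, and integrate against $r^{2t}\,dr$, convergent since $2t<-1$ --- but you would have to actually carry this out. Finally, a repairable slip: your parenthetical claim that the $\ltsmtwo$-part of $\normhosmtworn{\tilde w}$ is dominated by $\normlttrn{w}$ requires $s-2\leq t$, which fails for very negative $t$; reduce first to $t=-s$ by monotonicity of the norms, as the paper does.
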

Ikebe and Saito \cite{ikebe_limiting_1972} proved this estimate for the space 
dimension $n=3$ and with $t=-s$, which already shows the result also for any 
$t<-1/2$ as the norms depend monotonic on the parameters $s$ and $t$. For 
arbitrary space dimensions, we follow the proof of Vogelsang 
\cite[Satz 4]{vogelsang_absolute_1982}.
\begin{proof}
	First of all, observe that 
	\begin{align*}
		\maps{\Delta}{\Htworn\subset\ltrn}{\ltrn}{\ssolu}{\Delta\ssolu}	
	\end{align*}
	is self-adjoint and, therefore,
	$\ssolu:=(\,\Delta+\beta\,)^{-1}\srhs\in\Htworn$ is well 
	defined. Moreover, due to the monotone dependence of the 
	norms on the parameters $s$ and $t$, it is enough to concentrate on the case 
	$t=-s$. With $\tssolu:=\exp(-i\nu r)\ssolu$ and $\tsrhs:=\exp(-i\nu r)\srhs$, 
	we have $\tssolu\in\Htwo(\Om)$ and
	\begin{align*}
		\Delta\tssolu+i\nu\Big(\tau\tssolu
		+\frac{n-1}{r}\tssolu
		+2\hspace*{0.25mm}\pr\tssolu\Big)=\tsrhs\,.
	\end{align*}
	Applying Lemma \ref{lem:app-tech_Int-rules} to
	\begin{align*}
		&\Re\int_{\Gaprtil}\tsrhs\,\Big(\,r^{2s-1}\pr\btssolu
		 +\frac{n-1}{2}\,r^{2s-2}\btssolu
		 +\frac{\tau}{2}\,r^{2s-1}\btssolu\,\Big)=\ldots,
	\end{align*}
	with $\rtil>1$ and using the same techniques as in the proof of Lemma 
	\ref{lem:app-hh_pol-dec} we obtain 
	\begin{align*}
		&\frac{1}{2}\int_{\Gaprtil}r^{2s-2}
		 \Big(\,(4s-4)|\pr\tssolu|^{2}-(2s-3)|\nabla\tssolu|^{2}\,\Big)
		 +\frac{1}{2}\int_{\Gaprtil}r^{2s-1}\tau|\nabla\tssolu|^{2}\\
		&\hspace*{0.6cm}=-\Re\int_{\Gaprtil}\tsrhs\,\Big(\,r^{2s-1}\pr\btssolu
		 +\frac{n-1}{2}\,r^{2s-2}\btssolu
		 +\frac{\tau}{2}\,r^{2s-1}\btssolu\,\Big)\\
		&\hspace*{1.2cm}
		 +\frac{n-1}{2}(s-1)(n+2s-4)\int_{\Gaprtil}r^{2s-4}|\tssolu|^{2}
		 +\frac{\tau}{4}(2s-1)(n+2s-3)\int_{\Gaprtil}r^{2s-3}|\tssolu|^{2}\\
		&\hspace*{1.8cm}
		 +\frac{1}{2}\int_{\Sp(\rtil)}\rtil^{2s-1}\left(\,2\,|\pr\tssolu|^{2}
		 +\tau\,\Re\big(\pr\tssolu\hspace*{0.03cm}\btssolu\big)
		 -|\nabla \tssolu|^{2}
		 -\frac{(2s-1)}{\rtil}\,\tau\,|\tssolu|^{2}\,\right)\\
		&\hspace*{2.4cm}+\frac{(n-1)}{2}\int_{\Sp(\rtil)}\rtil^{2s-2}
		 \left(\Re\big(\pr\tssolu\hspace*{0.03cm}\btssolu\big)
		 -\frac{s-1}{\rtil}|\tssolu|^{2}\right)\\
		&\hspace*{3cm}-\frac{1}{2}\int_{\Sp(1)}\Big(\,2\,|\pr\tssolu|^{2}
		 +\tau\,\Re\big(\pr\tssolu\hspace*{0.03cm}\btssolu\big)
		 -|\nabla \tssolu|^{2}-(2s-1)\,\tau\,|\tssolu|^{2}\,\Big)\\
		&\hspace*{3.6cm}-\frac{(n-1)}{2}\int_{\Sp(1)}
		 \Big(\,\Re\big(\pr\tssolu\hspace*{0.03cm}\btssolu\big)
		 -(s-1)|\tssolu|^{2}\,\Big)\,.
	\end{align*}
	Since $4s-4<0$ and $|\pr\tssolu|\leq|\nabla\tssolu|$, the left hand side 
	can be estimated from below
	\begin{align*}
		&\frac{1}{2}\int_{\Gaprtil}r^{2s-2}
		 \Big(\,(4s-4)|\pr\tssolu|^{2}-(2s-3)|\nabla\tssolu|^{2}\,\Big)
		 +\frac{1}{2}\int_{\Gaprtil}r^{2s-1}\tau
		 \hspace*{0.02cm}|\nabla\tssolu|^{2}\\
		&\hspace*{0.6cm}\geq\frac{1}{2}\int_{\Gaprtil}r^{2s-2}
		 \Big(\,(4s-4)-(2s-3)\,\Big)|\nabla\tssolu|^{2}
		 =\left(s-\frac{1}{2}\right)\int_{\Gaprtil}r^{2s-2}|\nabla\tssolu|^{2}\,,
	\end{align*}
	while for the right hand side we obtain
	\begin{align*}
		&-\Re\int_{\Gaprtil}\tsrhs\,\Big(\,r^{2s-1}\pr\btssolu
		 +\frac{n-1}{2}\,r^{2s-2}\btssolu
		 +\frac{\tau}{2}\,r^{2s-1}\btssolu\,\Big)\;+\;\ldots\\
		&\hspace*{0.8cm}
		 \leq\int_{\Gaprtil}r^s|\tsrhs|\,\Big(\,r^{s-1}|\nabla \tssolu|
		 +\frac{n-1}{2}\,r^{s-2}|\tssolu|
		 +\frac{\tau}{2}\,r^{s-1}|\tssolu|\,\Big)\\
		&\hspace*{1.6cm}+c\cdot\left(\,\int_{\Gaprtil}r^{2s-4}|\tssolu|^{2}
		 +\tau\int_{\Gaprtil}r^{s-2}|\tssolu|\,r^{s-1}|\tssolu|\,\right.\\
		&\hspace*{2.4cm}+\left.\int_{\Sp(1)}\Big(\,|\nabla\tssolu|^{2}
		 +|\pr\tssolu\hspace*{0.03cm}\btssolu|+|\tssolu|^{2}\,\Big)
		 +\int_{\Sp(\rtil)}\rtil^{2s-1}\Big(\,|\nabla\tssolu|^{2}
		 +|\pr\tssolu\hspace*{0.03cm}\btssolu|+|\tssolu|^{2}\,\Big)\,\right)\,,
	\end{align*}
	yielding
	\begin{align*}
		&\left(s-\frac{1}{2}\right)\int_{\Gaprtil}r^{2s-2}|\nabla\tssolu|^{2}\\
		&\hspace*{0.6cm}
		 \leq\int_{\Gaprtil}r^s|\tsrhs|\,\Big(\,r^{s-1}|\nabla \tssolu|
		 +\frac{n-1}{2}\,r^{s-2}|\tssolu|+\frac{\tau}{2}\,r^{s-1}|\tssolu|\,\Big)
		 +c\cdot\left(\,\int_{\Gaprtil}r^{2s-4}|\tssolu|^{2}\right.\\
		&\hspace*{1.6cm}+\left.\tau\int_{\Gaprtil}r^{2s-2}|\tssolu|
		 +\int_{\Sp(1)}\Big(\,|\nabla\tssolu|^{2}
		 +|\tssolu|^{2}\,\Big)
		 +\int_{\Sp(\rtil)}\rtil^{2s-1}\Big(\,|\nabla\tssolu|^{2}
		 +|\tssolu|^{2}\,\Big)\,\right)\,.
	\end{align*}
	Here, as well as in the sequel, $c\in(0,\infty)$ denotes a generic 
	constant independent of $\nu$, $\tau$, $\ssolu$ and $\srhs$. According to 
	Lemma \ref{lem:app-tech_liminf} the lower limit for $\rtil\To\infty$ of the 
	last boundary integral vanishes. Thus we may omit it and replace $\Gaprtil$ 
	by $\cU(1)$, such that using Young's inequality we end up with 
	\begin{align*}		
		&\norm{r^{s-1}\,\nabla\tssolu}_{\lt(\cU(1))}^{2}\\
		&\hspace*{0.6cm}\leq c\cdot\Big(\,\norm{r^s\tsrhs}_{\lt(\cU(1))}^{2}
		 +\tau\norm{r^{s-1}\tssolu}_{\lt(\cU(1))}^{2}
		 +\norm{r^{s-2}\tssolu}_{\lt(\cU(1))}^{2}
		 +\int_{\Sp(1)}\Big(\,|\nabla\tssolu|^{2}+|\tssolu|^{2}\,\Big)\,\Big)\\
		&\hspace*{0.6cm}\leq c\cdot\Big(\,\normltrn{\tsrhs}^{2}
		 +\tau\normltsmorn{\tssolu}^{2}+\normltsmtworn{\tssolu}^{2}
		 +\int_{\Sp(1)}\Big(\,|\nabla\tssolu|^{2}+|\tssolu|^{2}\,\Big)\,\Big)\,.
	\end{align*}
	In addition the surface integral is bounded by 
	$\norm{\tssolu}_{\Htwo(\U(1))}^{2}$ (\,trace theorem\,) and 
	Lemma \ref{lem:app-hh_reg} yields,
	\begin{align*}
		\norm{\tssolu}_{\Htwo(\U(2))}
		\leq c\cdot \norm{\tssolu}_{\Htwoms(\rn)}
		\leq c\cdot\left(\,\norm{\tsrhs}_{\lts(\rn)}
		 +\norm{\tssolu}_{\ltms(\rn)}\,\right)\,,
	\end{align*}
	showing
	\begin{align*}		
		\normltsmorn{\nabla\tssolu}^{2}
		&\leq c\cdot\Big(\,\normltrn{\srhs}^{2}
		 +\tau\normltsmorn{\tssolu}^{2}+\normltsmtworn{\tssolu}^{2}
		 +\norm{\tssolu}_{\Htwo(\U(1))}^{2}\\	
		&\leq c\cdot\Big(\,\normltrn{\srhs}^{2}
		 +\tau\normltsmorn{\ssolu}^{2}+\normltsmtworn{\ssolu}^{2}
		 +\normltmsrn{\ssolu}^{2}\,\Big)\,.
	\end{align*}
	By the differential equation we see
	\begin{align*}
		\normltrn{\srhs}\normltrn{\ssolu}
		\geq\big|\,\Im\scpltrn{\srhs}{\ssolu}\,\big|
		=\big|\,i\nu\tau\,\scpltrn{\ssolu}{\ssolu}\,\big|
		=\tau\,|\,\nu\,|\normltrn{\ssolu}^2\,,
	\end{align*}
	hence ($-s>s-2$)
	\begin{align}\label{equ:app-hh_first-apr-est}
		\begin{split}
			\hspace*{-1.2cm}\norm{\exp(-i\nu r)\ssolu}_{\hosmtwo(\rn)}
				&\leq c\cdot\Big(\normltsmtworn{\tssolu}
				 +\normltsmorn{\nabla \tssolu}\Big)\\
				&\leq c\cdot\Big(\,\normltsrn{\srhs}^{2}
				 +\tau\,\normltsmorn{\ssolu}^{2}
				 +\normltmsrn{\ssolu}^{2}\,\Big)\\
				&\leq c\cdot\Big(\,\normltsrn{\srhs}
				 +\normltmsrn{\ssolu}\,\Big)\,,
		\end{split}
	\end{align}
	and it remains to estimate $\normltmsrn{\ssolu}$. For that we calculate
	\begin{align*}
		\Im &\int_{\Gaprtil}\tsrhs\btssolu
		 =\Im\int_{\Gaprtil}\Delta\tssolu\btssolu
	  	     +\int_{\Gaprtil}\nu\left(\,\tau\tssolu
	  	     +\frac{n-1}{r}\tssolu\,\right)\,\btssolu
		     +2\nu\,\Re\int_{\Gaprtil}\pr\tssolu\hspace*{0.03cm}\btssolu
		     =\,\ldots\,,
	\end{align*}
	using Lemma \ref{lem:app-tech_Int-rules} and obtain
	\begin{align*}
		&\nu\int_{\Gaprtil}r^{-2s}\Big(\,(2s-1)|\tssolu|^2
		 +\tau r|\tssolu|^2\,\Big)\\
		&\hspace*{0.8cm}=\Im\int_{\Gaprtil}r^{1-2s}\tsrhs\btssolu
		 -(2s-1)\int_{\Gaprtil}r^{-2s}\,
		 \Im\big(\pr\tssolu\hspace*{0.03cm}\btssolu\big)\\
		&\hspace*{1.6cm}+\int_{\Sp(\rtil)}r^{1-2s}\Big(\,\tau|\tssolu|^2
		 +\Im\big(\pr\tssolu\hspace*{0.03cm}\btssolu\big)\,\Big)
		 -\int_{\Sp(1)}\Big(\,\tau|\tssolu|^2
		 +\Im\big(\pr\tssolu\hspace*{0.03cm}\btssolu\big)\,\Big)\\
		&\hspace*{0.8cm}\leq\int_{\Gaprtil}r^{s}|\tsrhs|\,r^{1-3s}|\tssolu|
		 +(2s-1)\int_{\Gaprtil}r^{s-1}|\pr\tssolu|\,r^{1-3s}|\tssolu|\\
		&\hspace*{1.6cm}+c\cdot\left(\,\int_{\Sp(\rtil)}r^{1-2s}
		 \Big(\,|\tssolu|^2+|\pr\tssolu\hspace*{0.03cm}\btssolu|\,\Big)
		 +\int_{\Sp(1)}\Big(\,|\tssolu|^2
		 +|\pr\tssolu\hspace*{0.03cm}\btssolu|\,\Big)\,\right)\\
		&\hspace*{0.8cm}\leq\Big(\,\norm{r^{s}\tsrhs}_{\lt(\Gaprtil)}
		 +(2s-1)\norm{r^{s-1}\,\nabla\tssolu}_{\lt(\Gaprtil)}\,\Big)
		 \cdot\norm{r^{1-3s}\tssolu}_{\lt(\Gaprtil)}\\
		&\hspace*{1.6cm}+c\cdot\left(\,\int_{\Sp(1)}\Big(\,|\tssolu|^2
		 +|\nabla\tssolu|^2\,\Big)
		 +\int_{\Sp(\rtil)}\rtil^{1-2s}\Big(\,|\tssolu|^2
		 +|\nabla\tssolu|^2\,\Big)\right)\,.
	\end{align*}
	As before the lower limit for $\rtil\To\infty$ of the last boundary 
	integral vanishes (\,cf. Lemma \ref{lem:app-tech_liminf} and observe that 
	$\tssolu\in\Htwo(\Om)$, $s>0$\,), such that we may omit it and replace 
	$\Gaprtil$ by $\cU(1)$, yielding (\,with \eqref{equ:app-hh_first-apr-est}\,)
	\begin{align*}
		&\norm{r^{-s}\tssolu}_{\lt(\cU(1))}^2\\
		&\hspace*{0.6cm}
		 \leq c\cdot\bigg(\,\Big(\,\norm{r^{s}\tsrhs}_{\lt(\cU(1))}
		 +\norm{r^{s-1}\nabla\tssolu}_{\lt(\cU(1))}\,\Big)
		  \cdot\norm{r^{1-3s}\tssolu}_{\lt(\cU(1))}
		 +\int_{\Sp(1)}\Big(\,|\tssolu|^2+|\nabla\tssolu|^2\,\Big)\,\bigg)\\
		&\hspace*{0.6cm}\leq c\cdot\Big(\,\Big(\,\normltsrn{\tsrhs}
		 +\normltsmorn{\nabla\tssolu}\,\Big)\cdot\normltomthreesrn{\tssolu}
		 +\int_{\Sp(1)}\Big(\,|\tssolu|^2+|\nabla\tssolu|^2\,\Big)\,\Big)\,.
	\end{align*}
	As the surface integral is bounded by 
	$\norm{\tssolu}_{\Htwo(\U(1))}^2$ (\,trace theorem\,) and with 
	\eqref{equ:app-hh_first-apr-est} we obtain
	\begin{align*}
		\norm{\tssolu}_{\ltms(\rn)}^{2}
			&\leq c\cdot\left(\,\Big(\,\norm{\tsrhs}_{\lts(\rn))}
		 	 +\norm{\nabla\tssolu}_{\ltsmo(\rn)}\,\Big)
		 	 \cdot\normltomthreesrn{\tssolu}
		 	 +\norm{\tssolu}_{\Htwo(\U(2))}^2\,\right)\\
			&\leq c\cdot\left(\,\Big(\,\normltsrn{\tsrhs}
			 +\normltmsrn{\tssolu}\,\Big)
			 \cdot\normltomthreesrn{\tssolu}
			 +\norm{\tssolu}_{\Htwo(\U(2))}^2\,\right)\,,
	\end{align*}
	hence (\,Young's inequality\,)
	\begin{align*}
		\norm{\tssolu}_{\ltms(\rn)}^{2}
			\leq c\cdot\left(\,\normltsrn{\tsrhs}+\normltomthreesrn{\tssolu}
			+\norm{\tssolu}_{\Htwo(\U(2))}^2\,\right)\,,
	\end{align*}
	Finally, using once again Lemma \ref{lem:app-hh_reg}, we arrive at
	\begin{align*}
		\norm{\tssolu}_{\ltms(\rn)}^2
			\leq c\cdot\Big(\,\normltsrn{\tsrhs}
			+\normltomthreesrn{\tssolu}\,\Big)\,, 
	\end{align*}
	which together with \eqref{equ:app-hh_first-apr-est} and 
	Lemma \ref{lem:app-tech_weighted-est} implies
	\begin{align}\label{equ:app-hh_mud-apr-est}
		\normltmsrn{\ssolu}+\normhosmtworn{\exp(-i\nu r)\,\ssolu}
			\leq c\cdot\Big(\,\normltsrn{\srhs}+\normltomda{\ssolu}\,\Big)
	\end{align}
	with $c,\da>0$ independent of $\nu$, $\tau$, $\ssolu$ and $\srhs$. 
\end{proof}
%
%
\section{Proofs in the Case of the Time-Harmonic Maxwell Equations}
\label{sec:proofs-max}
%
%
This section deals with the proofs of the decomposition lemma, the polynomial 
decay, and the a-priori estimate, which we skipped in the main part.

\begin{proof}[\rm\bf Proof of Lemma \ref{lem:pol-est_decomp}]
	We start with $\solu=\eta\solu+\ceta\solu$, noting that $\ceta\solu\in\Rt$. 
	Moreover,
	\begin{align*}
		\Rot\ceta\solu
			=\mathrm{C}_{\Rot,\ceta}\solu+\ceta\Rot\solu
			=\mathrm{C}_{\Rot,\ceta}\solu-i\ceta\La\rhs-i\om\ceta\La\solu
	\end{align*}
	and we have
	\begin{align*}
		\big(\hspace*{0.03cm}\Rot+\,i\om\Laz\,\big)\hspace*{0.03cm}\ceta\solu
			=\big(\,\mathrm{C}_{\Rot,\ceta}
			 -i\om\ceta\hat{\La}\,\big)\hspace*{0.03cm}\solu
			 -i\ceta\La\rhs=\rhs_{1}\in\lts\,,
	\end{align*}
	since $\supp\nabla\ceta$ is compact and $t+\ka\geq s$. According to 
	\cite[Theorem 4]{weck_generalized_1994}
	\begin{align*}
		\rhs_{1}=\rhs_{\sfR}+\rhs_{\sfD}+\rhs_{\calS}
		 \in\zrs\dpl\zds\dpl\calS_{s}	
	\end{align*}
	holds and we obtain
	\begin{align*}
		i\om\ceta\Laz\solu
			=\rhs_{1}-\Rot\ceta\solu
			=\rhs_{\sfD}-\Rot\ceta\solu+\rhs_{\sfR}+\rhs_{\calS}	\,.
	\end{align*}
	Defining\\[-8pt]
	\begin{enumerate}[label=$\circ$,itemsep=3pt]
		\item $\dsp\solu_{1}
			   :=-\frac{i}{\om}\Laz^{-1}\big(\,\rhs_{\sfR}+\rhs_{\calS}\,\big)
			   \in\rs$,
		\item $\dsp\tsolu
		  	   :=\ceta\solu-\solu_{1}
		  	   =\frac{i}{\om}\Laz^{-1}
		  	   \big(\hspace*{0.03cm}\Rot\ceta\solu-\rhs_{\sfD}\,\big)
		  	   \in\Rt\cap\zdt$,\\[-7pt]
	\end{enumerate}
	\cite[Lemma 4.2]{kuhn_regularity_2010} shows $\tsolu\in\Hot$ 
	and we have
	\begin{align*}
		\big(\hspace*{0.03cm}\Rot+\,i\om\Laz\,\big)\hspace*{0.03cm}\tsolu
		 =\Rot\big(\,\ceta\solu-\solu_{1}\,\big)+i\om\Laz\tsolu
		 =\rhs_{\sfD}+\frac{i}{\om}\,{\tLaz}
		  \parbox[t]{0.35cm}{\vspace*{-0.4cm}${}^{-1}$}
		  \Rot\hspace*{0.01cm}\rhs_{\calS}
		 =\rhs_{2}\in\zds\,.
	\end{align*}
	Next we solve 
	$\big(\hspace*{0.03cm}\Rot+\,1\,\big)\hspace*{0.03cm}\solu_{2}=\rhs_{2}$. 
	Using the Fourier transformation we look at
	\begin{align*}
		\hsolu_{2}:=\big(\,1+r^{2}\,\big)^{-1}
		 \big(\,1-ir\,\Xi\,\big)\,\calF(\rhs_{2})
	\end{align*}
	Since $s>1/2$ and $\rhs_{2}\in\lts$, we obtain $\hsolu\in\lto$ and
	hence $\solu_{2}:={\dsp\calF}^{\hspace*{0.03cm}-1}(\hsolu_{2})\in\Ho$. 
	Moreover, we have $\calF\big(\calF(\rhs_{2})\big)=\calP(\rhs_{2})\in\lts$ 
	(\,$\calP$: parity operator\,) yielding $\calF(\rhs_{2})\in\Hs$ and 
	as product of an $\Hs$-field with bounded $\ci$-functions, 
	$\hsolu\in\Hs$ 
	\big(\,cf. \cite[Lemma 3.2]{wloka_partial_1987}\,\big), hence 
	$\solu_{2}\in\lts$. In addition a straight forward calculation shows 
	$\calF\big(\,\big(\hspace*{0.03cm}\Rot+\,1\,\big)\hspace*{0.03cm}\solu_{2}
	\,\big)=\calF(\rhs_{2})$, 
	which by \cite[Lemma 4.2]{kuhn_regularity_2010} implies 
	\begin{align*}
		\big(\hspace*{0.03cm}\Rot+\,1\,\big)\hspace*{0.03cm}\solu_{2}=\rhs_{2}
		\qquad\text{and}\qquad
		\solu_{2}\in\Hos\cap\zds\,.
	\end{align*}
	Then (\,$t\leq s$\,)
	\begin{align*}
		\solu_{3}:=\tsolu-\solu_{2}
		\in\Hot\cap\zdt
	\end{align*}
 	satisfies 
 	\begin{align*}
 		\big(\hspace*{0.03cm}\Rot +\,i\om\Laz\,\big)\hspace*{0.03cm}\solu_{3}
 		&=\big(\hspace*{0.03cm}\Rot+\,i\om\Laz\,\big)\hspace*{0.03cm}\tsolu
 		 -\big(\hspace*{0.03cm}\Rot+\,i\om\Laz\,\big)\hspace*{0.03cm}\solu_{2}\\
 		&=\rhs_{2}-\big(\hspace*{0.03cm}\Rot+\,1\,\big)\hspace*{0.03cm}\solu_{2}
 		 +\big(\,1-i\om\Laz\,\big)\hspace*{0.03cm}\solu_{2}
 		 =\big(\,1-i\om\Laz\,\big)\hspace*{0.03cm}\solu_{2}
 		 \in\Hos\cap\zds
 	\end{align*}
	and using once more \cite[Lemma 4.2]{kuhn_regularity_2010} we get
	\begin{align*}
		\solu_{3}\in\Htwot\cap\zdt\,.	
	\end{align*}
	Finally
	\begin{align*}
		\Delta\hspace*{0.03cm}\solu_{3}
		&=\Rot\big(\hspace*{0.03cm}\Rot\hspace*{0.03cm}\solu_{3}\,\big)
	 	 =\big(\,1-i\om\tLaz\,\big)\Rot\solu_{2}-i\om\tLaz\Rot\solu_{3}\\
		&=\big(\,1-i\om\tLaz\,\big)\big(\,\rhs_{2}-\solu_{2}\,\big)
		  -i\om\tLaz\big(\,\big(\,1-i\om\Laz\,\big)\hspace*{0.03cm}\solu_{2}
		  -i\om\Laz\hspace*{0.03cm}\solu_{3}\,\big)\\
		&=\big(\,1-i\om\tLaz\,\big)\,\rhs_{2}
		  -\big(\,1+\om^{2}\eps_{0}\mu_{0}\,\big)\hspace*{0.03cm}\solu_{2}
		  -\om^{2}\eps_{0}\mu_{0}\hspace*{0.03cm}\solu_{3}
	\end{align*}
	holds, and hence
	\begin{align*}
		\big(\,\Delta+\om^{2}\eps_0\mu_0\,\big)\hspace*{0.03cm}\solu_{3}
		 =\big(\,1-i\om\tLaz\,\big)\,\rhs_{2}
		  -\big(\,1+\om^{2}\eps_0\mu_0\,\big)\hspace*{0.03cm}\solu_{2}.
	\end{align*}
	The asserted estimates follow by straight forward calculations using 
	\cite[Lemma 4.2]{kuhn_regularity_2010} and the continuity of the projections
	from $\lts$ into $\zrs$,\;$\zds$ and $\calS_{s}$.
\end{proof}
\begin{proof}[\rm\bf Proof of Lemma \ref{lem:pol-est_polynomial-decay}]
	As for $t\geq s-1$ there is nothing to prove, we concentrate on
	\begin{align*}
		\solu\in\Rtom\quad\;\;\text{with}\quad-1/2<t<s-1\,.
	\end{align*} 
	Therefore assume first that in addition
	\begin{align*}
		s-\ka<t\;\;\Longrightarrow\;	\;t<s<t+\ka\,.
	\end{align*}
 	Then we may apply Lemma \ref{lem:pol-est_decomp} and decompose the field 
 	$u$ in
 	\begin{align*}
 		\solu=\eta \solu+\solu_{1}+\solu_{2}+\solu_{3}\,,
 	\end{align*} 
 	with $\eta \solu+\solu_{1}+\solu_{2}\in\Rsom$ and $\solu_{3}\in\Htwot$ 
 	satisfying 
 	$\big(\,\Delta+\om^2\eps_{0}\mu_{0}\,\big)\hspace*{0.03cm}\solu_{3}\in\lts$.
	Thus the polynomial decay for the Helmholtz equation (\,cf. Lemma \ref{lem:app-hh_pol-dec}\,) shows
 	\begin{align*}
 		\solu_{3}\in\Htwosmo
 		\qquad\text{and}\qquad
 		\normHtwosmows{\solu_{3}}
 		 \leq c\cdot\Big(\,\normltsws{\big(\,\Delta+\om^2\eps_{0}\mu_{0}\,\big)
 	     \hspace*{0.03cm}\solu_{3}}+\normltsmtwows{\solu_{3}}\,\Big)\,,
 	\end{align*}
	$c=c(s,J)>0$, yielding 
	$\solu=\eta \solu+\solu_{1}+\solu_{2}+\solu_{3}\in\Rsmoom$. 
	Moreover, using the estimates of Lemma \ref{lem:pol-est_decomp} we obtain
	uniformly with respect to\,\;$\om$,\;$\solu$, and $\rhs$
	\begin{align*}
		\normRsmoom{\solu}
		&\leq c\cdot\Big(\,\normltsom{\rhs}+\normltsmkaom{\solu}
		 +\normltsmows{\solu_{3}}\,\Big)\\
		&\leq c\cdot\Big(\,\normltsom{\rhs}+\normltsmkaom{\solu}
		 +\normltsws{\big(\,\Delta+\om^2\eps_{0}\mu_{0}\,\big)
 	     \hspace*{0.03cm}\solu_{3}}+\normltsmtwows{\solu_{3}}\,\Big)\\
		&\leq c\cdot\Big(\,\normltsom{\rhs}
		+\normltsmmom{\solu}\,\Big)\,,
	\end{align*}
	where $m:=\min\{\ka,2\}$ and applying Lemma \ref{lem:app-tech_weighted-est} 
	we end up with:
	\begin{align*}
		\normRsmoom{\solu}
		\leq c\cdot\Big(\,\normltsom{f}+\normltomda{\solu}\,\Big)\,,
	\end{align*}
	for $c,\da\in(0,\infty)$ independent	 of\,\;$\om$,\;$\solu$ and $\rhs$. 
	So let's switch to the case  
	\begin{align*}
		t\leq s-\ka\;\;\Longrightarrow\;\;t+\ka\leq s\,.
	\end{align*}
 	Here, the idea is to approach $s$ by overlapping intervals to which the first 
 	case is applicable. For that, we choose some $\hat{k}\in\N$, such that with 
 	$\ga:=(\ka-1)/2>0$ we have
	\begin{align*}
		t+\ka+\big(\,\hat{k}-1\,\big)\cdot\ga\leq s\leq t+\ka+\hat{k}\cdot\ga\,,
	\end{align*}
	and for $k=0,1,\ldots,\hat{k}$ we define
	\begin{align*}
		t_{k}:=t+k\cdot\ga
		\qquad\text{as well as}\qquad 
		s_{k}:=t_{k+1}+1=t_{k}+(\ka+1)/2\,.
	\end{align*}
	Then (\,as $\ka>1$\,)
	\begin{align*}
		t_{k+1}<s_{k}=t_{k+1}+1=t+\ka+(k-1)\cdot\ga\leq s
		\quad\,\text{and}\,\quad 
		t_{k}<t_{k+1}+1=s_{k}=t_{k}+(\ka+1)/2<t_{k}+\ka\,,	
	\end{align*}
	such that we can successively apply the first case, ending up with 
	$\solu\in\Rgen{}{}{s_{\hat{k}}-1}(\Om).$
	If $s=s_{\hat{k}}$ we are done. Otherwise we choose 
	$t_{\hat{k}+1}:=s_{\hat{k}}-1$ and apply the first case once more, since 
	\begin{align*}
		t_{\hat{k}+1}<s_{\hat{k}}<s
		\leq t+\ka+\hat{k}\cdot\ga=t_{\hat{k}+1}+\ka\,.
	\end{align*} 
	Either way we obtain $\solu\in\Rsmoom$ and now the estimate follows as in 
	the first case.
\end{proof}
\begin{proof}[\rm\bf Proof of Lemma \ref{lem:pol-est_a-priori}]
	Without loss of generality we assume $s\in(1/2,1)$. Then 
	$s\in\reals\sm\mathbb{I}$ with $0<s<\ka$ and we can apply Lemma 
	\ref{lem:pol-est_decomp} (\,with $t=0$\,) to decompose $\solu:=
	\sol\rhs\in\Rztom$ into
	\begin{align*}
		\solu=\eta\solu+\solu_{1}+\solu_{2}+\solu_{3}	
	\end{align*}
	with $\solu_{3}\in\Htwo$ solving
	\begin{align*}
		\big(\,\Delta+\om^2\eps_{0}\mu_{0}\,\big)\hspace*{0.03cm}\solu_{3}
 		 =\big(\,1-i\om\tLaz\,)\,\rhs_{2}
 		 -\big(\,1+\om^2\eps_{0}\mu_{0}\,\big)
 	     \hspace*{0.03cm}\solu_{2}=:\rhs_{3}\in\lts\,, 	
	\end{align*}
	where $\rhs_2$ is defined as in Lemma \ref{lem:pol-est_decomp}. Moreover, 
	the estimates from Lemma \ref{lem:pol-est_decomp} along with
	\begin{align*}
		\big(\hspace*{0.02cm}\Rot
		 -\,i\om\hspace*{0.03cm}\sqrt{\eps_{0}\mu_{0}}\;\Xi\;\big)
		 \hspace*{0.03cm}\solu
		=-i\La\rhs-i\om\hspace*{0.02cm}\big(\,\Laz
		 +\sqrt{\eps_{0}\mu_{0}}\;\Xi\;\big)
		 \hspace*{0.03cm}\solu-i\om\hat{\La}\solu
	\end{align*}
	yield
	\begin{align}\label{equ:proofs-max_a-priori-mud}
		\begin{split}
			&\hspace*{-0.6cm}\normRtom{\solu}
		 	 +\normltsmoom{\big(\,\Laz+\sqrt{\eps_0\mu_0}\;\Xi\,\big)
	     	 \hspace*{0.02cm}\solu}\\
	    	&\leq c\cdot\Big(\,\normRtom{\solu}
	    	 +\normltsmoom{\big(\hspace*{0.02cm}\Rot-\,i\om\hspace*{0.03cm}
	     	 \sqrt{\eps_{0}\mu_{0}}\;\Xi\;\big)\hspace*{0.03cm}\solu}
	      	 +\normltsom{\rhs}+\normltsmkaom{\solu}\,\Big)\\
	    	&\leq c\cdot\Big(\,\normlttws{\solu_{3}}
	         +\normltsmows{\big(\hspace*{0.02cm}\Rot-\,i\om\hspace*{0.03cm}
	      	 \sqrt{\eps_{0}\mu_{0}}\;\Xi\;\big)\hspace*{0.03cm}\solu_{3}}
	      	 +\normltsom{\rhs}+\normltsmkaom{\solu}\,\Big)\,,
		\end{split}
	\end{align}
	with $c=c(s,t,J)>0$. Due to the monotonicity of 
	the norms with respect to $t$ and $s$, we may assume $t$ and $s$ 
	to be close enough to $-1/2$\;resp.\;$1/2$ such that $1<s-t<\ka$ holds. 
	Hence, the assertion follows by \eqref{equ:proofs-max_a-priori-mud} and 
	Lemma \ref{lem:app-tech_weighted-est}, if we can show
	\begin{align*}
		\normlttws{\solu_{3}}
	    +\normltsmows{\big(\hspace*{0.02cm}\Rot-\,i\om\hspace*{0.03cm}
	    \sqrt{\eps_{0}\mu_{0}}\;\Xi\;\big)\hspace*{0.03cm}\solu_{3}}
	    \leq c\cdot\Big(\,\normltsom{\rhs}+\normltsmkaom{\solu}\,\Big)\,,
	\end{align*}
	with $c\in(0,\infty)$ independent of $\om$,\,$\solu$ and $\rhs$. Therefore 
	note thatthe self-adjointness of the laplacian 
	$\map{\Delta}{\Htwo\subset\lt}{\lt}$ yields 
	$\big(\,\Delta+\om^2\eps_{0}\mu_{0}\,\big)^{-1}\rhs_{3}=\solu_{3}$
	and applying Lemma \ref{lem:app-hh_a-priori} componentwise, we obtain
	\begin{align*}
		\normlttws{\solu_{3}}
		 +\normhosmtwows{\exp(\hspace*{0.02cm}-\,i\la\hspace*{0.02cm}
		  \sqrt{\eps_{0}\mu_{0}}\hspace*{0.03cm}r)\hspace*{0.03cm}\solu_{3}}
		  \leq c\cdot\Big(\normltsws{\rhs_{3}}+\normltomda{\solu_{3}}\Big)\,.
	\end{align*}
	With 
	$\Rot\big(\exp(\hspace*{0.02cm}-\,i\la\hspace*{0.02cm}\sqrt{\eps_{0}\mu_{0}}
	 \hspace*{0.03cm}r)\hspace*{0.03cm}\solu_{3}\,\big)
     =\exp(\hspace*{0.02cm}-\,i\la\hspace*{0.02cm}\sqrt{\eps_{0}\mu_{0}}
	 \hspace*{0.03cm}r\hspace*{0.02cm})\,\big(\,\Rot
	 -i\la\sqrt{\eps_{0}\mu_{0}}\;\Xi\;\big)\hspace*{0.03cm}\solu_{3}$
	this leads to
	\begin{align}\label{equ:proofs-max_a-priori-mud2}
		\begin{split}
			&\normlttws{\solu_{3}}
		     +\normltsmows{\big(\hspace*{0.02cm}\Rot-\,i\la\hspace*{0.03cm}
		     \sqrt{\eps_{0}\mu_{0}}\;\Xi\;\big)\hspace*{0.03cm}\solu_{3}}\\
		    &\hspace*{0.6cm}\leq \normlttws{\solu_{3}}
		     +\normltsmows{\Rot\big(\exp(\hspace*{0.02cm}-\,i\la\hspace*{0.02cm}
		     \sqrt{\eps_{0}\mu_{0}}\hspace*{0.03cm}r)
		     \hspace*{0.03cm}\solu_{3}\,\big)}\\
		    &\hspace*{0.6cm}\leq \normlttws{\solu_{3}}
			 +\normhosmtwows{\exp(\hspace*{0.02cm}-\,i\la\hspace*{0.02cm}
			 \sqrt{\eps_{0}\mu_{0}}\hspace*{0.03cm}r)\hspace*{0.03cm}\solu_{3}}
			 \leq c\cdot\Big(\normltsws{\rhs_{3}}
			 +\normltomda{\solu_{3}}\Big)\,,
		\end{split}
	\end{align}
	where $c>0$ is not depending on $\om$, $\solu_{3}$ and $\rhs_{3}$. But we 
	would like to estimate $\big(\hspace*{0.02cm}\Rot-\,i\om\hspace*{0.03cm}
	\sqrt{\eps_{0}\mu_{0}}\;\Xi\;\big)\hspace*{0.03cm}\solu_{3}$. For 
	that we need some additional arguments, starting with the observation that
	\begin{align*}
		\om=|\la|\,\big(\,1+(\sigma/\la)^2\,\big)^{1/4}\cdot
		\multif{\exp(i\varphi/2)}{\la>0}{\exp(i(\varphi/2+\pi))}{\la<0}
		\text{ with }\;
		\varphi:=\arctan(\sigma/\la)\in\Big(-\frac{\pi}{2},\frac{\pi}{2}\Big)\,,
	\end{align*}
	hence $|\Re(\om)|\geq\sqrt{2}/2\cdot|\la|$. Then
	$|\om+\la|\geq\sqrt{3/2}\cdot|\la|$ and we have 
	\begin{align*}
		|\om-\la|^2
		    =\left|\frac{\om^{2}-\la^{2}}{\om+\la}\right|^2
		    =\left|\frac{i\sigma\la}{\om+\la}\right|^2
		    \leq\frac{2}{3}\cdot\sigma^2.
	\end{align*}
	From this and the resolvent estimate 
	\begin{align*}
		\normltws{\rhs_{3}}=\normltws{(\,\Delta+\om^2\eps_0\mu_0\,)\,\solu_{3}}
		\geq|\Im(\om^2\eps_{0}\mu_{0})|\cdot\normltws{\solu_3}
		=\eps_{0}\mu_{0}\hspace*{0.02cm}
		 \sigma\hspace*{0.02cm}|\la|\cdot\normltws{\solu_3},
	\end{align*}
	we obtain (\,$s>1/2$\,)
	\begin{align*}
		\normltsmows{\big(\hspace*{0.02cm}\Rot-\,i\om\hspace*{0.03cm}
	     \sqrt{\eps_{0}\mu_{0}}\;\Xi\,\big)\hspace*{0.03cm}\solu_{3}}
	    &\leq\normltsmows{\big(\hspace*{0.02cm}\Rot-\,i\la
	     \hspace*{0.03cm}\sqrt{\eps_{0}\mu_{0}}\;\Xi\;\big)
	     \hspace*{0.03cm}\solu_{3}}+\normltsmows{(\hspace*{0.02cm}\om
	     -\la\hspace*{0.02cm})\hspace*{0.02cm}
	     \sqrt{\eps_{0}\mu_{0}}\;\Xi\,\solu_{3}}\\
	    &\leq \normltsmows{\big(\hspace*{0.02cm}\Rot-\,i\la
	     \hspace*{0.03cm}\sqrt{\eps_{0}\mu_{0}}\;\Xi\,\big)
	     \hspace*{0.03cm}\solu_{3}}+c\cdot|\la|^{-1}\normltsws{\rhs_{3}}	\,,  	
	\end{align*}
	such that with \eqref{equ:proofs-max_a-priori-mud2} and the estimates from 
	Lemma \ref{lem:pol-est_decomp} uniformly with respect to\,\;$\om$,\,$\solu$ 
	and $\rhs$
	\begin{align*}
		\normlttws{\solu_{3}}
	    +\normltsmows{\big(\hspace*{0.02cm}\Rot-\,i\om\hspace*{0.03cm}
	    \sqrt{\eps_{0}\mu_{0}}\;\Xi\,\big)\hspace*{0.02cm}\solu_{3}}
	    \leq c\cdot\Big(\,\normltsom{\rhs}+\normltsmkaom{\solu}\,\Big)\,.
	\end{align*}
\end{proof}
%
%
\end{document}